\def\eps{\varepsilon}
\def\N{{\mathbb N}}
\def\S{{\mathbb S}}
\newcommand{\sm}{\setminus}
\newtheorem{theorem}{Theorem}
\newtheorem{lemma}[theorem]{Lemma}
\newtheorem{corollary}[theorem]{Corollary}
\newtheorem{proposition}[theorem]{Proposition}
\theoremstyle{definition}
\newtheorem{remark}[theorem]{Remark}
\newtheorem{definition}[theorem]{Definition}
\def\esssup{\mathop{\rm ess\,sup\,}}
\def\dist{{\rm dist\,}}
\def\lip{{\rm Lip\,}}
\def\supp{{\rm supp\,}}
\newcommand{\R}{\mathbb{R}}
\newcommand{\brac}[1]{\left (#1 \right )}
\newcommand{\abs}[1]{\left |#1 \right |}
\newcommand{\Ep}{\bigwedge\nolimits}
\newcommand{\aleq}{\precsim}
\newcommand{\laps}[1]{|D|^{#1}}
\newcommand{\lapms}[1]{I^{#1}}
\newcommand{\dG}{\operatorname{d}_\alpha\!}  
\newcommand{\Div}{\operatorname{Div}}
\newcommand{\divR}{\operatorname{Div}_\alpha}  
\newcommand{\divG}{\operatorname{div}_\alpha}  
\newcommand{\VR}{{{\rm Var}_\alpha\!}}   
\newcommand{\VG}{{{\rm var}_\alpha\!}}   
\newcommand{\BVG}{{bv}_\alpha} 
\newcommand{\BVRz}{{{\rm BV}_{00}^\alpha(\Omega)}}
\newcommand{\phiR}{{\Phi}}              
\newcommand{\phiG}{\Phi}                    
\DeclareMathOperator*{\argmin}{arg\,min} 
\newcommand{\BVO}{\mathrm{BV}(\Omega)}
\newcommand{\Da}{{D^\alpha}}
\renewcommand{\div}{\operatorname{div}}
\newcommand{\Cc}{C_c} 
\newcommand{\Cinf}{\mathscr{C}^\infty}
\newcommand{\Per}{\rm Per}	%
\newcommand{\Var}{{\rm Var}}  
\newcommand{\var}{{\rm var}}  
\newcommand{\Lb}[1]{\mathcal{L}^{#1}} %
\providecommand{\abs}[1]{\left| #1\right|} 
\providecommand{\norm}[1]{\left\| #1\right\|}
\providecommand{\prts}[1]{\left( #1 \right)}
\author{Harbir Antil}
\address[Harbir Antil]{Department of Mathematical Sciences and the Center for Mathematics and Artificial Intelligence (CMAI) 
George Mason University, Fairfax, VA 22030, USA. \newline
ORCiD: 0000-0002-6641-1449}
\email{hantil@gmu.edu}
\author{Hugo D\'iaz}
\address[Hugo D\'iaz]{Department for Mathematical Sciences,
University of Delaware, Newark, DE 19716, USA.  
}
\email{hugodiaz@udel.edu}
\author{Tian Jing}
\address[Tian Jing]{Department of Mathematics, University of Pittsburgh, Pittsburgh, PA 15261, USA.
}
\email{tij11@pitt.edu}
\author{Armin Schikorra}
\address[Armin Schikorra]{Department of Mathematics, University of Pittsburgh, Pittsburgh, PA 15261, USA.\newline
ORCiD: 0000-0001-9242-1782}
\email{armin@pitt.edu}
\title{Nonlocal Bounded Variations with Applications}
\thanks{HA and HD are partially supported by NSF grants DMS-2110263, DMS-2111315, DMS-1913004, 
the Air Force Office of Scientific Research under Award NO: FA9550-22-1-0248. TJ and AS are partially supported under NSF Career DMS-2044898. AS is partially supported by Simons foundation grant no 579261}
\numberwithin{theorem}{section} \numberwithin{equation}{section}
\begin{document}

\begin{abstract}
Motivated by problems where jumps across lower dimensional subsets and sharp transitions
across interfaces are of interest, this paper studies the properties of fractional bounded variation ($BV$)-type spaces. 
Two different natural fractional analogs of classical $BV$ are considered:  $BV^\alpha$, a space induced from the Riesz-fractional gradient that has been recently studied by Comi-Stefani; and $bv^\alpha$, induced by the Gagliardo-type fractional gradient often used in Dirichlet forms and Peridynamics -- this one is naturally related to the Caffarelli-Roquejoffre-Savin fractional perimeter. Our main theoretical result is that the latter $bv^\alpha$ actually corresponds to the Gagliardo-Slobodeckij space $W^{\alpha,1}$. As an application, using the properties of these spaces, novel image denoising models are introduced and their corresponding Fenchel 
pre-dual formulations are derived. The latter requires density of smooth functions with compact support. We establish this density property for convex domains.
\end{abstract}

\keywords{
fractional derivatives, fractional BV, image denoising
}

\subjclass[2010]{
35R11,       
49Jxx,        
65R20       
65D18       
}

\maketitle

\tableofcontents
\section{Introduction}\label{sec:intro}
In recent years, fractional calculus and nonlocal operators have emerged as natural 
tools to study various phenomena in science and engineering. Unlike their classical
counterparts, fractional operators have several distinct abilities, for instance, they 
require less smoothness and they are nonlocal in nature. Such flexibilities have led 
to multiple successes of fractional derivative-based models in practical applications. 
For instance, magnetotellurics in geophysics \cite{CJWeiss_BGVBWaanders_HAntil_2020a}, 
viscoelastic models  \cite{Mainardi2010}, quantum spin chains and harmonic maps 
\cite{DLR2011,ELenzmann_ASchikorra_2018a,HAntil_SBartels_ASchikorra_2021a},
deep neural networks \cite{HAntil_RKhatri_RLohner_DVerma_2020a}, repulsive curves
\cite{Crane2021}, etc. 

A fundamental concept in inverse problems, such as image denoising, is the use of 
regularization. The article \cite{HAntil_SBartels_2017a} introduced the
fractional Laplacian as a regularizer in image denoising as an alternative to well-known
approaches such as total-variation regularization. Subsequently, this model 
has been successfully used by various authors in imaging science as it provides a 
behavior that is closer to total variation based approaches \cite{JAIglesias_MMercier_2022a}, 
but it is easy to implement in practice. The current paper is motivated by these observations. 
We also refer to \cite{Osher2007} for a different (discrete) nonlocal regularization in imaging.

Fundamental developments are being made in fractional calculus. In fact, now 
there exist notions of fractional divergence and gradient. The aforementioned fractional 
Laplacian, for instance, can be obtained by the composition of fractional divergence and 
fractional gradient. This is similar to the classical integer order setting. Such  
discoveries are not only fueling further developments in analysis  but are also leading to 
new application areas or improving the existing ones. Motivated by image denoising, 
the goal of this paper is to 
study fundamental properties of the space of (nonlocal) fractional bounded variation. Based 
on such fractional order spaces, we introduce novel image denoising models, and we derive
Fenchel dual formulations \cite[chapter III]{Ekeland99} for these. 
Notice that such formulations are critical in deriving efficient numerical methods in 
the classical setting.
The remainder of this section provides a precise discussion on new image denoising 
models to motivate the analytical tools developed in this paper. 

A well-established  method to solve image denoising problems is based on {\it total variation minimization}  
\cite{Alvarez1992,RudinThesis,ROF}. 
Let ${ u_N:\Omega \subseteq \R^n \rightarrow \R}$  denote a continuous representation 
of an image (possibly noisy). Given a regularization parameter $\beta > 0$, a standard 
image denoising problem amounts to finding $u$ solving 
\begin{align}
	\argmin_{u \in \mathscr{X}} \left\{ \beta |Du|_{ \mathscr{X}}
		+ \frac{1}{2} \| u-u_N \|_{L^2(\Omega)}^2 \right\}, \label{BVproblem}
\end{align}
where the space $ \mathscr{X}$ is chosen in conjunction  with the norm $|\cdot|_{ \mathscr{X}}$ such that 
$D u$ is well defined at least in a distributional sense, and  $u$ can be  piecewise smooth. 
In  practice, one of the most common spaces used  is the space of functions with  {\it bounded variation} (BV) defined by 
\begin{align*}
\BVO= \left\{ u\in L^1(\Omega)\!: \Var(u;\Omega) < \infty\right\}.
\end{align*}
Namely, 
a function $u$ in  $ L^1(\Omega,\R)$ is said to 
have bounded variation if and only if
\[
\Var(u;\Omega) := \sup \left \{ \int_{{\R^n}} f(x)\, \Div \Phi(x)\, dx \, : \,  \Phi \in \Cc^1(\Omega,\R^n), \|\Phi\|_{L^\infty(\Omega)} \leq 1  \right \}  < \infty.
\]
If the variation $\Var(u;\Omega)$ is finite, one can show that its distributional derivative $Du$ is a Radon measure and $\Var(u;\Omega) = |Du|(\Omega)$, see \cite[Ch. 10]{Attouch2014}. 
It is  well-known that $\BVO$  preserves edges, in a noisy image, better than $W^{1,1}(\Omega)$
while retaining several of its properties. For instance, it is a Banach space, it is lower semi-continuous 
on $L^1(\Omega)$, Sobolev inequalities,  etc. 

In this work, we are interested in the fractional version of the problem \eqref{BVproblem}.
For this, we first need to decide on a notion of fractional $BV$. We do so by replacing in the definition above the derivative with some suitable fractional derivative. Alas, there are many different, yet natural, fractional operators that are considered extensions of the usual gradient -- and each one induces its own $BV$-space. 

We will consider the two most popular notions. 
Firstly, we will consider the space $BV^\alpha$, which we refer to as \emph{Riesz-type}. The study of $BV^\alpha$ was initiated by Comi-Stefani in \cite{CS19}, see \Cref{s:fracbvriesz}. It relies on the notion of what is sometimes referred to as  Riesz gradient $D^\alpha$, which is simply the usual gradient combined with a regularizing Riesz potential.

The other type of fractional $BV$ we consider will be denoted by $bv^\alpha$ and is referred to as \emph{Gagliardo}-type, see \Cref{s:bvalphagag}. We are not aware whether this has been considered in the literature prior to this work. The notion of fractional derivative is what we {will refer} to as the Gagliardo-type derivative considered in various  aspects of mathematics, e.g. Dirichlet forms \cite{H15}, Peridynamics \cite{DGLZ13} and harmonic analysis \cite{MS18}. This Gagliardo-type $bv^\alpha$ is naturally related to the most popular notion of a fractional perimeter  defined by Caffarelli--Roquejoffre--Savin \cite{MCRS10}. Indeed, we will show in \Cref{th:bvsws1} that $bv^\alpha$ coincides with the Gagliardo-Sobolev space $W^{\alpha,1}$ -- a maybe surprising  feature of the case $\alpha < 1$, since this is false for $\alpha = 1$: indeed it is well-known that $W^{1,1} \neq BV$, see \cite{EG15}. This is one of the main theoretical contributions of the current paper.

Based on these notions of fractional $BV$, we will introduce new types of variational models for image denoising. Namely, we study the fractional versions of \eqref{BVproblem},
\begin{align}
	\argmin_{u \in \mathscr{X}} \left\{ \beta\Var_\alpha(u;\Omega)
		+ \frac{\gamma}{p} \| u-u_N\|_{{L^p(\Omega)}}^p \right\} . \label{BVproblem_non}
\end{align}
A related model was studied by Bartels and one of the authors in \cite{HAntil_SBartels_2017a} but working in fractional order Hilbert space $H^s(\Omega)$ instead of ${\mathscr{X}} = BV^\alpha(\Omega)$.

We emphasize that the numerical algorithms for solving problems of type 
\eqref{BVproblem} make use of the Fenchel dual formulations 
\cite{SBartels_2015a,AChambolle_TPock_2011a}. However, this requires dealing with 
the dual space of $\BVO$, whose full characterization is still an unknown \cite{Torres}. 
Instead one proceeds by finding a predual problem to \eqref{BVproblem}, i.e., a 
problem whose Fenchel conjugate is \eqref{BVproblem}, see for instance 
\cite{Burger,AChambolle_2004a, Hintermuller}. In this case, one does not need to 
deal with $\BVO^*$, but instead the closure in $L^p(\Omega)$ of the range of a  
 divergence-like operator,  which is the conjugate of  
 $ -D:\mathscr{X}\subset \BVO  \rightarrow \mathcal{M}(\Omega,\R^n)$.
 We will derive a pre-dual problem 
corresponding to \eqref{BVproblem_non} in \Cref{s:imagedenoise}. 
Derivation of pre-dual requires density of smooth functions with
compact support. This is highly non-trivial in general even in the local case.
We establish this result provided that the domain $\Omega$ is convex. Such results are of interest independent of this 
paper, see Propositions~\ref{pr:IbetaeqItildebetaRiesz} and \ref{pr:IbetaeqItildebetaGag}.

\section{Fractional BV in the Riesz sense}\label{s:fracbvriesz}

We begin by recalling the notion of fractional Laplacian and its inverse, the Riesz potential.
 Denote by $\mathcal{F}$ and $\mathcal{F}^{-1}$ the Fourier transform on $\R^n$. For $\alpha > 0$ the fractional Laplacian of $f: \R^n \to \R$ with differential order $\alpha$, denoted by $\laps{\alpha}f$, 
 is given by 
\[
 \laps{\alpha} f(x) := \mathcal{F}^{-1} \brac{|\xi|^{\alpha} \mathcal{F} f(\xi)}(x).
\]
The notation $\laps{\alpha} = (-\Delta)^{\frac{\alpha}{2}}$ is common, 
but we will mostly use the notation $|D|^\alpha$ in this paper, 
since it  states the order of derivatives more clearly. 
The definition above makes sense when $\alpha < 0$.
 In that case, we call the operator Riesz potential. 
 More precisely, for all  $\alpha \in (0,n)$ we define 
\[
 \lapms{\alpha} f(x) := \mathcal{F}^{-1} \brac{|\xi|^{-\alpha} \mathcal{F} f(\xi)}(x).
\]
It is then easy to see that $\laps{\alpha}\lapms{\alpha} f=\lapms{\alpha}\laps{\alpha} f= f$,
 at least for suitably smooth functions {with decay} at infinity, 
i.e. the fractional Laplacian and Riesz potential are inverses to each other.
The fractional Laplacian $\laps{\alpha}$ has no gradient structure. 
It does not converge to the gradient $D$ when $\alpha \to 1$. 
Recently, many authors considered a fractional-order  operator with a gradient structure. 
Although this operator {can be traced} as far back as \cite{H59}, it has received increased interest in various applications since the works e.g. \cite{CS19,S15,S15b,SS15}. It is defined very simply as the usual gradient of the Riesz potential
\begin{align}\label{def:DaR}
 \Da f:= D \lapms{1-\alpha} f.
\end{align}
From its  Fourier transform representation, it is easy to show that $\Da \to D$ as $\alpha \to 1$. 
The fractional divergence $\divR$ is defined as 
\[
 \divR f = \div \lapms{1-\alpha} f.
\]
Note that  $\divR$ is the  adjoint of $-\Da$. In fact, the following integration-by-parts formula holds 
\begin{align} \label{eq:IBPR}
 \int_{\R^n} F \cdot \Da g dx= -\int_{\R^n} \divR F\, gdx  \qquad \forall  F \in  \Cc^\infty(\R^n,\R^n),\forall g \in \Cc^\infty(\R^n),
\end{align}
which follows readily from the definition via the Fourier transform and Plancherel's theorem. 
We remark on the integral definition of the above operators.
For any $\alpha \in (0,1]$, we have
\begin{equation}\label{eq:integralrep}
\begin{split}
\laps{\alpha} f(x) &= c_{1,\alpha} \int_{\R^n} \frac{f(x)-f(y)}{|x-y|^{n+\alpha}} dy,\\
 D^\alpha  f(x) &= c_{2,\alpha} \int_{\R^n} \frac{\brac{f(x)-f(y)} (x-y)}{|x-y|^{n+\alpha+1}} dy,\\
 \divR F(x) &= c_{3,\alpha} \int_{\R^n} \frac{\brac{F(x)-F(y)}\cdot (x-y)}{|x-y|^{n+\alpha+1}} dy,
\end{split}
 \end{equation}
for some constants $c_{1,\alpha}$,  
$c_{2,\alpha}$ and $c_{3,\alpha}$, 
which can be found in the literature.
Having the notion of fractional gradient, we {naturally obtain an} associated notion of fractional BV spaces.
 Our definitions are very similar to \cite{CS19} and different from other natural approaches  
 as in \cite{BGJ14} or an approach via a different type of nonlocal gradient and divergence as in \cite{DGLZ13,MS18}, which we will discuss later in \Cref{s:bvalphagag}.
When using the {notion of fractional BV spaces in this paper, 
most of the required properties will follow 
similar general principles as} in typical BV spaces. We provide a derivation
of the results that we could not find in the literature and provide references
otherwise. It is possible that some of these results are known to the experts.

To distinguish the resulting space from the one discussed in \Cref{s:bvalphagag}, we use the notations $BV^\alpha$, {${\rm Div}_\alpha$} and $\Var_{\alpha}$.
In \Cref{s:bvalphagag} we will use $bv^\alpha$, $\div_\alpha$, and $\var_\alpha$ instead.

Let $\alpha \in (0,1]$ and  $f \in L^1({\R^n})$,
the variation of $f$ is defined as  

\begin{align} \label{Def:Valpha}
\Var_\alpha(f;\R^n) := \sup \left \{ \int_{{\R^n}} f\, \Div_\alpha \Phi\, dx \, : \,  {\Phi} \in \Cc^1(\R^n;\R^n), \|\Phi\|_{L^\infty(\R^n)} \leq 1  \right \} . 
\end{align}
Let $\Omega\subseteq \mathbb{R}^n$. For any $f \in L^1(\Omega)$, we define
\[
\Var_{\alpha}(f;\Omega) := \Var_\alpha(\chi_{\Omega} f;\R^n),
\]
where $\chi_{\Omega} f$ is the extension of $ f$ by zero to $\R^n$.
The integral
\[
\int_{{\R^n}} f\, \Div_\alpha \Phi\, dx 
\] 
is well defined for all $f \in L^1(\R^n)$ and $\Phi \in \Cc^1(\R^n {, \R^n})$, which is a consequence of the following result.

\begin{lemma}\label{la:divalphaphi}
Let $\Phi \in \Cc^1(\R^n;\R^n)$,
 then for any $\alpha \in (0,1]$ and any  $p \in [1,\infty]$ we have 
\[
 \Div_\alpha \Phi \in L^p(\R^n).
\]
\end{lemma}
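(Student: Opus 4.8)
The plan is to use the integral representation of $\Div_\alpha \Phi$ from \eqref{eq:integralrep}, namely
\[
\Div_\alpha \Phi(x) = c_{3,\alpha} \int_{\R^n} \frac{\brac{\Phi(x)-\Phi(y)}\cdot (x-y)}{|x-y|^{n+\alpha+1}}\, dy,
\]
and to split the integral into a ``near'' piece $|x-y| < 1$ and a ``far'' piece $|x-y| \geq 1$, estimating each separately. Because $\Phi \in \Cc^1(\R^n;\R^n)$, it is bounded with bounded derivative and supported in some fixed ball $B_R$. For the near piece, I would use the mean value estimate $|\Phi(x)-\Phi(y)| \leq \|D\Phi\|_{L^\infty} |x-y|$, so that the integrand is bounded in absolute value by $\|D\Phi\|_{L^\infty} |x-y|^{1-n-\alpha}$; since $1 - n - \alpha > -n$, this is integrable over $\{|x-y|<1\}$ and yields a bound on the near piece that is uniform in $x$ (in particular an $L^\infty$, hence $L^p$ on the support, bound). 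For the far piece, I would use $|\Phi(x)-\Phi(y)| \leq 2\|\Phi\|_{L^\infty}$, so the integrand is bounded by $2\|\Phi\|_{L^\infty} |x-y|^{-n-\alpha}$, which is integrable over $\{|x-y|\geq 1\}$, again giving a uniform bound. Together these give $\Div_\alpha \Phi \in L^\infty(\R^n)$.

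To upgrade from $L^\infty$ to $L^p$ for all $p \in [1,\infty]$, it remains to control the behaviour of $\Div_\alpha \Phi(x)$ for $|x|$ large, i.e. to get integrability at infinity. For $|x|$ large compared to $R$, one has $\Phi(x) = 0$, so
\[
\Div_\alpha \Phi(x) = -c_{3,\alpha} \int_{B_R} \frac{\Phi(y)\cdot (x-y)}{|x-y|^{n+\alpha+1}}\, dy,
\]
and for $y \in B_R$ and $|x| \geq 2R$ we have $|x-y| \geq |x|/2$, hence
\[
|\Div_\alpha \Phi(x)| \lesssim \|\Phi\|_{L^\infty} |B_R|\, |x|^{-n-\alpha}.
\]
Since $-n-\alpha < -n$, the function $|x|^{-n-\alpha}$ lies in $L^p(\{|x| \geq 2R\})$ for every $p \in [1,\infty]$. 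Combining this decay estimate on $\{|x|\geq 2R\}$ with the uniform $L^\infty$ bound on $\{|x| < 2R\}$ (a set of finite measure), we conclude $\Div_\alpha \Phi \in L^p(\R^n)$ for all $p \in [1,\infty]$.

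The only genuine subtlety — and the step I would be most careful about — is justifying the integral representation and the decomposition: one must check that the principal-value-type integral in \eqref{eq:integralrep} is absolutely convergent for $\Phi \in \Cc^1$, which is exactly what the near-piece estimate with the $|x-y|^{1-n-\alpha}$ bound provides, so there is no actual cancellation needed and Fubini-type manipulations in the far-field computation are legitimate. Everything else is a routine splitting of the domain of integration according to the two available pointwise bounds on $\Phi(x) - \Phi(y)$, and tracking that the resulting exponents of $|x-y|$ fall on the correct side of $-n$. I do not expect any obstacle beyond bookkeeping the constants, which depend only on $n$, $\alpha$, $\|\Phi\|_{L^\infty}$, $\|D\Phi\|_{L^\infty}$ and $\supp \Phi$.
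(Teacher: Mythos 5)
Your argument for $\alpha\in(0,1)$ is correct, and your treatment of the tail is a genuinely different (and slightly more direct) route than the paper's: you derive the pointwise decay $|\Div_\alpha\Phi(x)|\lesssim |x|^{-n-\alpha}$ for $|x|\geq 2R$, which lies in $L^p(\{|x|\geq 2R\})$ for every $p\in[1,\infty]$ simultaneously, whereas the paper proves the $L^\infty$ bound and the $L^1$ bound separately (the latter via Fubini against $\|\Phi\|_{L^1}$) and then invokes interpolation to cover the intermediate exponents. Both are fine; your version avoids the interpolation step at the cost of the (easy) far-field pointwise estimate.

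There is, however, one genuine gap: the lemma is stated for $\alpha\in(0,1]$, and your proof breaks down at the endpoint $\alpha=1$. Your near-field bound gives an integrand of size $\|D\Phi\|_{L^\infty}|x-y|^{1-n-\alpha}$, and the condition $1-n-\alpha>-n$ you rely on is exactly $\alpha<1$; for $\alpha=1$ this majorant is $|x-y|^{-n}$, which is not integrable near the diagonal, so the absolute-convergence claim for the integral representation fails and the whole decomposition is no longer justified. The case $\alpha=1$ must be treated separately, and it is in fact trivial: $\Div_1\Phi=\div I^{0}\Phi=\div\Phi$ is continuous with compact support, hence in $L^p(\R^n)$ for all $p\in[1,\infty]$. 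This is precisely how the paper disposes of that case before running the singular-integral estimate for $\alpha<1$. With that one-line addition your proof is complete.
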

\begin{proof}
Fix $\Phi \in \Cc^1(\R^n;\R^n)$. 
For $\alpha = 1$, 
we have $\Div_\alpha \Phi \in \Cc (\R^n)\subseteq L^p(\R^n)$ for all $p \in [1,\infty]$.
For $\alpha < 1$, we have from \eqref{eq:integralrep} that
\[
\abs{\Div_\alpha \Phi (x)} \aleq_{\alpha} \brac{2\| \Phi  \|_{L^\infty(\R^n)}  +  \|\nabla \Phi \|_{L^\infty(\R^n)}
 }
  \, \int_{\R^n} \frac{\min\{1,|x-y|\} }{|x-y|^{n+\alpha}} dy.
\]
Here $\aleq_{\alpha}$ implies that the hidden constant depends on $\alpha$ (and any constant may depend on the dimension {$n$}). 
Since $\alpha < 1$, the following integral is finite 
and {has} the same value for every $x \in \R^n$, 
i.e.
\[
 \int_{\R^n} \frac{\min\{1,|x-y|\} }{|x-y|^{n+\alpha}} dy \equiv C(n,\alpha) <\infty,
\]
which implies that
\[
 \|\Div_\alpha \Phi \|_{L^\infty(\R^n)} \aleq_{\alpha} \brac{\| \Phi \|_{L^\infty(\R^n)}  +  \|\nabla  \Phi \|_{L^\infty(\R^n)}}.
\]

It remains to prove that $\Div_\alpha \Phi \in L^1(\R^n)$. 
Once this is shown we conclude $\Div_\alpha \Phi \in L^p(\R^n)$ for any $p \in [1,\infty]$ by interpolation.
Taking $R \geq 1$ large enough, such that $\supp \Phi \subset B(0,R/2)$,
then for $x \in \R^n \sm B(0,R)$ we have 
\[
 \abs{\Div_\alpha \Phi(x)} \aleq_{\alpha} \int_{B(0,R/2)} \frac{\abs{\Phi(y)}}{|x-y|^{n+\alpha}} dy.
\]
By Fubini's theorem, we have  
\[
 \begin{split}
  \|\Div_\alpha \Phi \|_{L^1(\R^n\sm B(0,R))} \aleq& \int_{B(0,R/2)} |\Phi (y)| \brac{\int_{\R^n\sm B(0,R)} \frac{1}{|x-y|^{n+\alpha}} dx}\, dy\\
  \aleq& \| \Phi \|_{L^1(\R^n)} \sup_{y \in \R^n \sm B(0,R/2)} \brac{\int_{ \{ x: |x-y| \geq R/2 \} } \frac{1}{|x-y|^{n+\alpha}} dx}.
  \end{split}
 \]
Here we hide the constant by using $\aleq$. Using the fact that
\[
 \int_{\{ x: |x-y| \geq R/2\} } \frac{1}{|x-y|^{n+\alpha}} dx \aleq_{\alpha} R^{-\alpha} < \infty,
\]
we obtain 
\[
 \|\Div_\alpha \Phi \|_{L^1(\R^n\sm B(0,R))} \aleq_{\alpha} \| \Phi \|_{L^1(\R^n)}.
\]
On the complement $B(0,R)$, 
we have $\Div_\alpha \Phi \in L^\infty(B(0,R)) \subset L^1(B(0,R))$.
Thus, we obtain that $\|\Div_\alpha \Phi \|_{L^1(\R^n)}<\infty$,
which finishes the proof.
\end{proof}

Now we are ready to define the first fractional BV space of this work, i.e.  $BV^\alpha$, see also \cite{CS19,CS19b,Comi_2022} where this space was considered first. This space inherits most of its properties from the gradient structure of the Riesz-derivative $D^\alpha$, cf. (\ref{def:DaR}). 

\begin{definition}[Riesz-type fractional BV]
For $\Omega \subset \R^n$, we define
\begin{align}
  BV_{00}^\alpha(\Omega) := \{f \in L^1(\R^n): \quad f \equiv 0 \quad on \quad \R^n \setminus \Omega, \quad \Var_{\alpha}(f;\Omega) < \infty\},
  \label{def:BV00}
\end{align}
endowed with the norm
\[
 \|f\|_{BV^\alpha(\Omega)} := \|f\|_{L^1(\Omega)} + \Var_{\alpha}(f;\Omega).
\]
\end{definition}

In this paper, we  often identify $f \in L^1(\Omega)$ with its extension by zero $\chi_{\Omega} f \in L^1(\R^n)$. 
Observe that we do not need to assume any regularity of $\partial \Omega$ in the above (and below) definitions and results. 
The regularity of $\partial \Omega$ only comes into play when 
we consider whether constant functions in $\Omega$ belong to $BV^\alpha(\Omega)$. 
Namely $1 \in L^1(\Omega)$ belongs to $BV_{00}^\alpha(\Omega)$ (with the usual identification $1 \in L^1(\Omega)$ corresponds to $\chi_{\Omega} \in L^1(\R^n)$) if the $\alpha$-Cacciopoli-perimeter of $\partial \Omega$ is finite. We refer to \cite{CS19} for the definition of this perimeter. Essentially by definition we immediately obtain
\begin{proposition}\label{pr:Jalphaconstfinite} 
The surface $\partial \Omega$ has finite $\alpha$-Cacciopoli-perimeter, 
i.e. $\Per _\alpha (\partial \Omega )<\infty$ if and only if $\Var_{\alpha}(1;\Omega) < \infty$.
\end{proposition}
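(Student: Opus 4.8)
The plan is to show that the two conditions in the statement describe the \emph{same} quantity, so that finiteness of one is finiteness of the other; as the remark preceding the proposition indicates, essentially everything reduces to unwinding the definition in \cite{CS19} and comparing it with that of \Cref{s:fracbvriesz}. (Implicitly $|\Omega|<\infty$ here, so that $1\in L^1(\Omega)$, equivalently $\chi_\Omega\in L^1(\R^n)$, makes sense.) Recall from Comi--Stefani that for measurable $E\subseteq\R^n$ with $\chi_E\in L^1(\R^n)$ the $\alpha$-fractional Caccioppoli perimeter is $\Per_\alpha(E;\R^n):=\Var_\alpha(\chi_E;\R^n)$, and that $\partial E$ is said to have finite $\alpha$-Caccioppoli perimeter exactly when this is finite; taking $E=\Omega$ gives the left-hand side $\Per_\alpha(\partial\Omega)<\infty$. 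On the other hand, by the definition of the relative variation in \Cref{s:fracbvriesz}, $\Var_{\alpha}(1;\Omega):=\Var_\alpha(\chi_\Omega\cdot 1;\R^n)=\Var_\alpha(\chi_\Omega;\R^n)$. So both sides of the proposition refer to the single supremum
\[
\sup\left\{\int_{\R^n}\chi_\Omega\,\Div_\alpha\Phi\,dx\ :\ \Phi\in\Cc^1(\R^n;\R^n),\ \|\Phi\|_{L^\infty(\R^n)}\le 1\right\},
\]
which is well posed (possibly $+\infty$) since $\Div_\alpha\Phi\in L^\infty(\R^n)$ by \Cref{la:divalphaphi} and $\chi_\Omega\in L^1(\R^n)$; the asserted equivalence is then immediate.

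The one point needing a short argument is that \cite{CS19} phrases its definition using smooth compactly supported fields $\Cc^\infty(\R^n;\R^n)$ rather than $\Cc^1(\R^n;\R^n)$, so I must check the two suprema agree. Since $\Cc^\infty\subseteq\Cc^1$ one inequality is trivial. For the other I would fix $\Phi\in\Cc^1(\R^n;\R^n)$ with $\|\Phi\|_{L^\infty}\le 1$ and $\supp\Phi\subseteq B(0,R)$, mollify $\Phi_\eps:=\Phi*\rho_\eps$ with a standard mollifier ($0<\eps<1$) so that $\Phi_\eps\in\Cc^\infty$, $\|\Phi_\eps\|_{L^\infty}\le 1$ and $\supp\Phi_\eps\subseteq B(0,R+1)$, and use that $\Div_\alpha$ is translation invariant (a Fourier multiplier, and for $\alpha<1$ the singular integral of \eqref{eq:integralrep}), hence commutes with convolution: $\Div_\alpha\Phi_\eps=(\Div_\alpha\Phi)*\rho_\eps\to\Div_\alpha\Phi$ locally uniformly. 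Combined with the uniform tail bound $\|\Div_\alpha\Phi_\eps\|_{L^1(\R^n\setminus B(0,R+1))}\aleq_{\alpha}\|\Phi_\eps\|_{L^1}\le\|\Phi\|_{L^1}$ read off from the proof of \Cref{la:divalphaphi}, this lets me pass to the limit in $\int_{\R^n}\chi_\Omega\,\Div_\alpha\Phi_\eps\,dx$ against the fixed weight $\chi_\Omega\in L^1(\R^n)$. Thus the $\Cc^\infty$-supremum is $\ge$ the $\Cc^1$-supremum and equality follows. I would also note that the localization conventions agree: in \cite{CS19} one has the relative perimeter $\Per_\alpha(E;A)$ for open $A$, and here the relevant case is $A=\R^n$, consistent with the convention $\Var_\alpha(f;\Omega):=\Var_\alpha(\chi_\Omega f;\R^n)$ used throughout this section.

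I do not expect a genuine obstacle: the statement is, as noted, essentially definitional, and the single technical ingredient — the mollification reconciling $\Cc^1$ and $\Cc^\infty$ test fields — is routine. Its only mild subtlety is that one pairs the approximate divergences against the \emph{fixed} $L^1$ function $\chi_\Omega$, so that local uniform convergence of $\Div_\alpha\Phi_\eps$ together with a uniform tail bound suffices, with no estimate beyond \Cref{la:divalphaphi} required.
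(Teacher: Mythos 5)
Your proposal is correct and matches the paper, which offers no proof beyond the remark that the statement holds ``essentially by definition'': both sides are the single quantity $\Var_\alpha(\chi_\Omega;\R^n)$ once the Comi--Stefani definition of $\Per_\alpha$ and the convention $\Var_\alpha(1;\Omega)=\Var_\alpha(\chi_\Omega;\R^n)$ are unwound. Your additional mollification argument reconciling the $\Cc^1$ and $\Cc^\infty$ test classes is a sound and welcome piece of diligence that the paper simply glosses over.
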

Observe that the Cacciopoli-perimeter above is different from the more commonly used fractional perimeter introduced by Caffarelli-Roquejoffre-Savin \cite{MCRS10}. The latter one is related to the fractional version of BV functions defined using the divergence as used in, e.g. \cite{DGLZ13,MS18}. We shall discuss it in \Cref{s:bvalphagag}.

Next, we note that one can obtain the existence of the distributional derivative $D^\alpha f$ (which is a Radon measure) just like for $BV$, see \cite[p.167, Theorem 1, Structure Theorem]{EG15}
If $f \in BV_{00}^\alpha(\Omega)$, then the mapping
\[
 \Cc^1(\R^n{;\R^n}) \ni \Phi \mapsto \int_{\R^n} f\Div_\alpha \Phi dx
\]
extends to a linear functional on $(C_{c}(\R^n;\R^n),\|\cdot\|_{L^\infty(\R^n)})$. By the Riesz representation theorem \cite[Section~1.8, Theorem~1]{EG15}, there exists a Radon measure $\mu$ on $\R^n$ and a $\mu$-measurable function $\sigma: \R^n \to \R^n$ such that $|\sigma|  = 1$ $\mu$-a.e. and 
\[
 \int_{\R^n} f\Div_\alpha \Phi  dx = -\int_{\R^n} \Phi \cdot \sigma d\mu.
\]
Moreover, we have 
\[
 |\mu(\R^n)| \leq \Var_{\alpha}(f;\Omega).
\]
The latter follows by the definition of the norm.
By slight abuse of notation we will denote by $D^\alpha f$ both the distributional derivative and the measure $D^\alpha f := \sigma \llcorner \mu$ (where $\llcorner$ denotes the concatenation of function and measure), whichever is applicable.

We now consider the approximation of {$BV^\alpha_{00}(\Omega)$} functions by smooth functions.
Since $f$ is compactly supported, the convolution $f \ast \eta_\eps$ is in $\Cc^\infty(\R^n)$. 
Using the same argument as in \cite[Theorem 5.2]{EG15}, we obtain the following result.
\begin{proposition}\label{pr:smoothapprox}
Let $\Omega \subset \R^n$ be open and bounded. For any $f \in BV^\alpha_{00}(\Omega)$ there exists $f_k \in \Cc^\infty(\R^n)$ such that 

\[
 \|f_k-f\|_{L^1(\R^n)} + \abs{\Var_\alpha(f;\R^n)-\Var_\alpha(f_k;\R^n)} \xrightarrow{k \to \infty} 0.
\] 
Equivalently, (since $f$ vanishes outside of $\Omega$), 
\[
 \|f_k-f\|_{L^1(\R^n)} + \abs{\Var_\alpha(f;\Omega)-\Var_\alpha(f_k;\R^n)} \xrightarrow{k \to \infty} 0.\]
\end{proposition}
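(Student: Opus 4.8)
The plan is to mollify $f$ by convolution with a standard approximate identity $\eta_\eps$ and then pass to a sequence $\eps_k \to 0$. The two things to check are (i) $L^1$-convergence of $f_k := f \ast \eta_{\eps_k}$ to $f$, which is standard since $f \in L^1(\R^n)$, and (ii) convergence of the variations $\Var_\alpha(f_k;\R^n) \to \Var_\alpha(f;\R^n)$. For (ii), as in \cite[Theorem 5.2]{EG15}, the inequality $\liminf_k \Var_\alpha(f_k;\R^n) \ge \Var_\alpha(f;\R^n)$ follows from lower semicontinuity of the variation with respect to $L^1$-convergence (which in turn is immediate from the definition \eqref{Def:Valpha} as a supremum of continuous linear functionals on $L^1$). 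The real content is the reverse inequality $\limsup_k \Var_\alpha(f_k;\R^n) \le \Var_\alpha(f;\R^n)$.

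For the upper bound I would argue as follows. Fix $\Phi \in \Cc^1(\R^n;\R^n)$ with $\|\Phi\|_{L^\infty} \le 1$. Then
\[
\int_{\R^n} f_k \, \Div_\alpha \Phi \, dx = \int_{\R^n} (f \ast \eta_{\eps_k}) \, \Div_\alpha \Phi \, dx = \int_{\R^n} f \, \big( (\Div_\alpha \Phi) \ast \check\eta_{\eps_k} \big) \, dx,
\]
where $\check\eta_{\eps_k}(x) = \eta_{\eps_k}(-x)$. The key algebraic fact is that $\Div_\alpha$ commutes with convolution (both being Fourier multipliers / translation-invariant operators), so $(\Div_\alpha \Phi) \ast \check\eta_{\eps_k} = \Div_\alpha (\Phi \ast \check\eta_{\eps_k})$, and $\Psi_k := \Phi \ast \check\eta_{\eps_k} \in \Cc^\infty(\R^n;\R^n) \subset \Cc^1(\R^n;\R^n)$ with $\|\Psi_k\|_{L^\infty} \le \|\Phi\|_{L^\infty} \le 1$ (using $\int \eta_{\eps_k} = 1$, $\eta_{\eps_k} \ge 0$). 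Hence $\int_{\R^n} f_k \, \Div_\alpha \Phi \, dx = \int_{\R^n} f \, \Div_\alpha \Psi_k \, dx \le \Var_\alpha(f;\R^n)$, and taking the supremum over admissible $\Phi$ gives $\Var_\alpha(f_k;\R^n) \le \Var_\alpha(f;\R^n)$ for every $k$. Combined with the lower semicontinuity this yields $\Var_\alpha(f_k;\R^n) \to \Var_\alpha(f;\R^n)$, and since each $f_k \in \Cc^\infty(\R^n)$ we are done; the second formulation follows from $\Var_\alpha(f;\Omega) = \Var_\alpha(\chi_\Omega f;\R^n) = \Var_\alpha(f;\R^n)$ by definition.

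\textbf{The main obstacle} is justifying the interchange of convolution and $\Div_\alpha$ and the associated Fubini step, i.e. showing $\int (f \ast \eta) \Div_\alpha \Phi = \int f \, \Div_\alpha(\Phi \ast \check\eta)$ rigorously. One must check that the integrals converge absolutely and that $\Div_\alpha$ indeed commutes with translations so that it commutes with convolution; this is clear on the Fourier side for Schwartz functions, but here $f$ is only $L^1$ and $\Div_\alpha \Phi$ is merely in every $L^p$ by \Cref{la:divalphaphi}, so the cleanest route is to note $\Div_\alpha \Phi \in L^\infty \cap L^1$, hence $(\Div_\alpha\Phi)\ast\check\eta_{\eps_k}\to \Div_\alpha\Phi$ in $L^1$ and pointwise, and verify $\Phi\ast\check\eta_{\eps_k}\in\Cc^1$ with the claimed sup bound directly from the integral definition of $\Div_\alpha$ in \eqref{eq:integralrep} without passing through Fourier analysis. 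Apart from this bookkeeping, the argument is the classical BV mollification proof transplanted verbatim, with $\Div$ replaced by $\Div_\alpha$; no genuinely new difficulty arises because $\Div_\alpha$ shares the two properties actually used — translation invariance and mapping $\Cc^1$ into $L^1 \cap L^\infty$.
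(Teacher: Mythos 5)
Your proof is correct and is essentially the argument the paper invokes: the paper simply mollifies $f$ (noting $f\ast\eta_\eps\in \Cc^\infty(\R^n)$ because $f$ is compactly supported) and cites the classical argument of \cite[Theorem 5.2]{EG15}, which is exactly your duality step $\int f_k\,\Div_\alpha\Phi=\int f\,\Div_\alpha(\Phi\ast\check\eta_{\eps_k})$ giving $\Var_\alpha(f_k;\R^n)\le\Var_\alpha(f;\R^n)$, combined with lower semicontinuity for the reverse inequality. The commutation of $\Div_\alpha$ with convolution that you flag as the main bookkeeping point is indeed the only thing to verify beyond the classical case, and your proposed justification via the integral representation and Fubini is sound.
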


We also have the following embedding theorem.
\begin{proposition} \label{Prop:embeddingLp}
Let $\Omega \subset \R^n$ be open and bounded and $n \geq 2$. 
Then for all $p \in [1,\frac{n}{n-\alpha}]$
we have $ BV^\alpha_{00}(\Omega)\subseteq L^p(\R^n)$ 
and
\[
 \|f\|_{L^p(\R^n)} \leq C(n,p,\alpha) \|f\|_{BV_{00}^\alpha(\Omega)}.
\]
If $n = 1$, then the same results hold for all $p \in \left[1, \frac{1}{1-\alpha}\right)$.
\end{proposition}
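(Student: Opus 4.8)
The plan is to reduce the bound to a sharp fractional Sobolev inequality on all of $\R^n$ for smooth, compactly supported functions, transfer it to $BV^\alpha_{00}(\Omega)$ via the density statement \Cref{pr:smoothapprox} and a lower-semicontinuity argument, and finish by interpolation with the trivial $L^1$ bound.

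\emph{Step 1: the inequality for $g\in\Cc^\infty(\R^n)$.} I would first record that for $g\in\Cc^\infty(\R^n)$ one has $D^\alpha g\in L^1(\R^n)$ --- this follows from the integral representation \eqref{eq:integralrep} by repeating the estimate in the proof of \Cref{la:divalphaphi} with $g$ in place of $\Phi$ --- and hence $\Var_\alpha(g;\R^n)=\|D^\alpha g\|_{L^1(\R^n)}$, which is immediate from the integration-by-parts identity \eqref{eq:IBPR} (extended from Schwartz to $\Cc^1$ test fields by density) and the definition of $\Var_\alpha$. The core is the weak-type Sobolev bound
\[
\|g\|_{L^{\frac{n}{n-\alpha},\infty}(\R^n)}\;\le\;C(n,\alpha)\,\|D^\alpha g\|_{L^1(\R^n)},\qquad g\in\Cc^\infty(\R^n),
\]
valid for all $n\ge 1$, together with its strengthening to the strong norm $\|g\|_{L^{n/(n-\alpha)}(\R^n)}$ when $n\ge2$. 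For the weak bound I would use the identity $g=c\,I^\alpha\bigl(\Rz\cdot D^\alpha g\bigr)$, with $\Rz$ the vector Riesz transform, which is a one-line Fourier computation from $D^\alpha=D\,I^{1-\alpha}$; since the convolution kernel of each component of $I^\alpha\Rz$ is homogeneous of degree $\alpha-n$ (and integrable near the origin, as $0<\alpha<n$), one obtains the pointwise bound $|g(x)|\le C\,(I^\alpha|D^\alpha g|)(x)$, and the claim follows from the weak-type Hardy--Littlewood--Sobolev inequality $\|I^\alpha h\|_{L^{n/(n-\alpha),\infty}}\le C\|h\|_{L^1}$. The strong endpoint bound for $n\ge2$ I would simply cite from Comi--Stefani \cite{CS19} (see also \cite{SS15}); it is considerably more delicate and relies on the special irrotational-type structure of $\Rz\cdot D^\alpha g$, not merely on mapping properties of $I^\alpha$. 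For $\alpha=1$ these are the classical weak, resp.\ strong, Gagliardo--Nirenberg--Sobolev inequalities.

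\emph{Step 2: passage to $BV^\alpha_{00}(\Omega)$ and interpolation.} Given $f\in BV^\alpha_{00}(\Omega)$, choose $f_k\in\Cc^\infty(\R^n)$ as in \Cref{pr:smoothapprox}, so $f_k\to f$ in $L^1(\R^n)$ and $\Var_\alpha(f_k;\R^n)\to\Var_\alpha(f;\R^n)=\Var_\alpha(f;\Omega)$. Passing to a subsequence along which $f_k\to f$ a.e., and using that both the $L^q$ norm and the weak-$L^q$ quasinorm are lower semicontinuous under a.e.\ convergence (Fatou's lemma applied to $|f_k|^q$, resp.\ to the distribution functions), Step~1 yields
\[
\|f\|_{L^{\frac{n}{n-\alpha},\infty}(\R^n)}\le C(n,\alpha)\,\Var_\alpha(f;\Omega)\quad(n\ge1),\qquad \|f\|_{L^{\frac{n}{n-\alpha}}(\R^n)}\le C(n,\alpha)\,\Var_\alpha(f;\Omega)\quad(n\ge2).
\]
Since also $\|f\|_{L^1(\R^n)}\le\|f\|_{BV^\alpha_{00}(\Omega)}$ and $\Var_\alpha(f;\Omega)\le\|f\|_{BV^\alpha_{00}(\Omega)}$, I would conclude by interpolation: for $n\ge2$, log-convexity of the $L^p$ norms interpolates the $L^1$ and $L^{n/(n-\alpha)}$ bounds to give $\|f\|_{L^p(\R^n)}\le C(n,p,\alpha)\|f\|_{BV^\alpha_{00}(\Omega)}$ for every $1\le p\le\tfrac{n}{n-\alpha}$; for $n=1$, a standard Marcinkiewicz-type interpolation between $L^1$ and the weak space $L^{1/(1-\alpha),\infty}$ gives the same bound for every $1\le p<\tfrac1{1-\alpha}$. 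This is the assertion.

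\emph{Main obstacle.} The only non-routine ingredient is the strong endpoint $p=\tfrac{n}{n-\alpha}$ for $n\ge2$. It is \emph{not} a consequence of the boundedness of the Riesz potential, since $I^\alpha$ is only of weak type at the $L^1$ endpoint and $\Rz\cdot D^\alpha g$ need not lie in $L^1$; one must exploit that this vector field is essentially a fractional gradient, exactly as the classical endpoint $BV(\R^n)\hookrightarrow L^{n/(n-1)}$ requires the Loomis--Whitney tensorization and genuinely fails for $n=1$. This dichotomy is why the admissible interval is closed for $n\ge2$ but must be left half-open for $n=1$, and for that single step I would invoke the literature rather than reprove it.
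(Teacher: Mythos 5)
Your proposal is correct and follows essentially the same route as the paper: approximate by mollification via \Cref{pr:smoothapprox}, identify $\Var_\alpha(f_k;\R^n)=\|D^\alpha f_k\|_{L^1(\R^n)}$ by integration by parts, invoke the literature for the fractional Sobolev inequality on $\Cc^\infty(\R^n)$ (the paper cites the main result of \cite{SSS17} for the whole range $p\in[1,\tfrac{n}{n-\alpha}]$ at once), and pass to the limit with Fatou's lemma. The only difference is cosmetic: you reassemble the intermediate exponents by interpolation from the two endpoints and spell out the $n=1$ case via the weak-type Hardy--Littlewood--Sobolev bound plus Marcinkiewicz interpolation, details the paper delegates entirely to the citation.
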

\begin{proof}
Let $f_k$ be the approximation of $f$ as in \Cref{pr:smoothapprox}.
By the main result in \cite{SSS17}, for all $ p \in \left[1,\frac{n}{n-\alpha}\right]$ we have 
\[
 \|f_k\|_{L^p(\R^n)} \leq C \brac{\|f_k\|_{L^1(\R^n)} + \|D^\alpha f_k\|_{L^1(\R^n)}},
\]
since $f_k \in \Cc^\infty(\R^n)$. Observe that by an integration-by-parts formula, since we already know {$D^\alpha f_k \in L^1(\R^n,\R^n)$},
\[
 \|D^\alpha f_k\|_{L^1(\R^n,\R^n)} = \Var_{\alpha}(f_k;\R^n).
\]
Since up to subsequences $f_k$ converges to $f$ almost everywhere we conclude from Fatou's lemma,
\[
\begin{split}
 \|f\|_{L^p(\R^n)} \leq& \liminf_{k \to \infty}\|f_k\|_{L^p(\R^n)} \leq C \liminf_{k \to \infty} \brac{\|f_k\|_{L^1(\R^n)} + \Var_{\alpha}(f_k;\R^n)}\\
 =& C\left(\|f\|_{L^1(\R^n)} + \Var_{\alpha}(f;\R^n)\right).
 \end{split}
\]
{The proof is complete.}
\end{proof}
Using the duality definition of $\Var_\alpha$ and the same argument as in \cite[Theorem 5.2]{EG15},
we obtain the lower semicontinuity with respect to the so called intermediate convergence,
see Definition 10.1.3 and Remark 10.1.3 in \cite{Attouch2014} for details.
\begin{proposition}[Lower semicontinuity]\label{lsc}
Let $\Omega \subset \R^n$ be open and bounded. Assume ${\{f_k\}_{k =1}^{\infty} \subset BV^\alpha_{00}(\Omega)}$, and assume that $f \in L^1(\R^n)$ such that 
\[
\|f_k-f\|_{L^1(\R^n)} \xrightarrow{k \to \infty} 0.
\]
Then $f \in BV^\alpha_{00}(\Omega)$ and we have 
\[
\Var_\alpha(f;\R^n) \leq \liminf_{k \to \infty}\Var_\alpha(f_k;\R^n).
\]
Or, equivalently,
\[
\Var_\alpha(f;\Omega) \leq \liminf_{k \to \infty}\Var_\alpha(f_k;\Omega).
\]

\end{proposition}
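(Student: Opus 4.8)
The plan is to prove lower semicontinuity of $\Var_\alpha(\cdot;\R^n)$ with respect to $L^1$-convergence directly from the duality formula \eqref{Def:Valpha}, exactly as one does for classical $BV$. The starting point is the observation that for each fixed admissible test field $\Phi \in \Cc^1(\R^n;\R^n)$ with $\|\Phi\|_{L^\infty(\R^n)} \le 1$, the linear functional
\[
g \mapsto \int_{\R^n} g\, \Div_\alpha \Phi\, dx
\]
is continuous on $L^1(\R^n)$, because by \Cref{la:divalphaphi} we have $\Div_\alpha \Phi \in L^\infty(\R^n)$, so that
\[
\left| \int_{\R^n} (f_k - f)\, \Div_\alpha \Phi\, dx \right| \le \|\Div_\alpha \Phi\|_{L^\infty(\R^n)} \, \|f_k - f\|_{L^1(\R^n)} \xrightarrow{k\to\infty} 0 .
\]
Hence $\int_{\R^n} f\, \Div_\alpha \Phi\, dx = \lim_{k\to\infty} \int_{\R^n} f_k\, \Div_\alpha \Phi\, dx$, and the right-hand side is bounded above by $\liminf_{k\to\infty}\Var_\alpha(f_k;\R^n)$ since each $\int_{\R^n} f_k\, \Div_\alpha \Phi\, dx \le \Var_\alpha(f_k;\R^n)$ by definition of the supremum.

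Next I would take the supremum over all admissible $\Phi$ on the left-hand side: since the bound $\int_{\R^n} f\, \Div_\alpha \Phi\, dx \le \liminf_{k\to\infty}\Var_\alpha(f_k;\R^n)$ holds for every such $\Phi$ uniformly, passing to the supremum gives
\[
\Var_\alpha(f;\R^n) \le \liminf_{k\to\infty}\Var_\alpha(f_k;\R^n).
\]
In particular, if the right-hand side is finite (which one may assume, else there is nothing to prove), then $\Var_\alpha(f;\R^n) < \infty$. Combined with $f \in L^1(\R^n)$ and the fact that $f \equiv 0$ on $\R^n \setminus \Omega$ — this last property is inherited from the $f_k$ under $L^1$-convergence, since $f_k \to f$ in $L^1(\R^n)$ forces convergence a.e. along a subsequence and each $f_k$ vanishes outside $\Omega$ — we conclude $f \in BV^\alpha_{00}(\Omega)$.

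Finally, the equivalent statement in terms of $\Var_\alpha(\cdot;\Omega)$ follows immediately from the definition $\Var_\alpha(g;\Omega) := \Var_\alpha(\chi_\Omega g;\R^n)$: since all the functions in question already vanish outside $\Omega$, we have $\chi_\Omega f = f$ and $\chi_\Omega f_k = f_k$ as elements of $L^1(\R^n)$, so $\Var_\alpha(f;\Omega) = \Var_\alpha(f;\R^n)$ and likewise for each $f_k$. There is no serious obstacle here; the only point requiring a little care is justifying the interchange of supremum and limit, which is handled cleanly by the one-sided estimate above (one never needs to interchange a $\sup$ with a $\liminf$ in the hard direction — the inequality goes the easy way). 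The essential input is \Cref{la:divalphaphi}, which guarantees $\Div_\alpha \Phi \in L^\infty$ and hence the $L^1$-continuity of the pairing; everything else is the standard lower-semicontinuity-of-a-supremum-of-continuous-functionals argument.
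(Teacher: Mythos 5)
Your proof is correct and is exactly the argument the paper intends (the paper only cites the standard duality argument from \cite[Theorem 5.2]{EG15} rather than writing it out): fix an admissible $\Phi$, use \Cref{la:divalphaphi} to get $\Div_\alpha\Phi\in L^\infty(\R^n)$ and hence $L^1$-continuity of the pairing, bound by the $\liminf$, and take the supremum. Your remarks on inheriting the vanishing of $f$ outside $\Omega$ and on the finiteness of the $\liminf$ being needed for the membership conclusion are the right way to read the statement.
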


\begin{corollary}
Let $\Omega \subset \R^n$ be bounded. Then $\left(BV^\alpha_{00}(\Omega),  \|\cdot\|_{BV^\alpha(\Omega)}\right)$ is a complete space.
\end{corollary}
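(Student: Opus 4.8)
The plan is the standard argument for completeness of $BV$-type spaces, built entirely on the lower semicontinuity already established in \Cref{lsc}. I would start by fixing a sequence $\{f_k\}$ that is Cauchy in $\left(BV^\alpha_{00}(\Omega),\|\cdot\|_{BV^\alpha(\Omega)}\right)$. Since $\|g\|_{L^1(\Omega)} \le \|g\|_{BV^\alpha(\Omega)}$, this sequence is in particular Cauchy in $L^1(\R^n)$ (identifying each $f_k$ with its extension by zero), hence converges in $L^1(\R^n)$ to some $f$; moreover $f \equiv 0$ on $\R^n \setminus \Omega$, because each $f_k$ vanishes there and $L^1$-convergence passes to an a.e.\ subsequential limit. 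Applying \Cref{lsc} to $\{f_k\}$ directly gives $f \in BV^\alpha_{00}(\Omega)$, so the candidate limit lies in the space. At this point I would also record, for later use, that $BV^\alpha_{00}(\Omega)$ is a linear space and $\Var_\alpha(\cdot;\Omega)$ is a seminorm on it: subadditivity and positive homogeneity are immediate from the duality formula \eqref{Def:Valpha} together with the symmetry $\Phi \mapsto -\Phi$ of its admissible set, and the support condition in \eqref{def:BV00} is preserved under linear combinations.

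It then remains to upgrade $L^1$-convergence to convergence in the full norm, i.e.\ to show $\Var_\alpha(f_k - f;\Omega) \to 0$. Given $\eps > 0$, choose $N$ so that $\Var_\alpha(f_k - f_l;\Omega) < \eps$ whenever $k,l \ge N$. Fix $k \ge N$. The sequence $\{f_k - f_l\}_{l}$ lies in $BV^\alpha_{00}(\Omega)$ (by the linearity just noted) and converges in $L^1(\R^n)$ to $f_k - f$ as $l \to \infty$, so a second application of \Cref{lsc} yields
\[
\Var_\alpha(f_k - f;\Omega) \le \liminf_{l\to\infty}\Var_\alpha(f_k - f_l;\Omega) \le \eps.
\]
Combining this with $\|f_k - f\|_{L^1(\Omega)} \to 0$ gives $\|f_k - f\|_{BV^\alpha(\Omega)} \le \|f_k - f\|_{L^1(\Omega)} + \eps$ for all $k \ge N$, hence $\|f_k - f\|_{BV^\alpha(\Omega)} \to 0$, which is the desired conclusion. (That $\|\cdot\|_{BV^\alpha(\Omega)}$ is a genuine norm, not merely a seminorm, is clear from the presence of the $L^1$-term.)

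There is essentially no hard part here: completeness reduces to using the lower semicontinuity of \Cref{lsc} twice — once for the limit object $f$, once for the errors $f_k - f$ — plus the elementary seminorm property of $\Var_\alpha(\cdot;\Omega)$. The only point demanding a line of care is verifying that $f$ and each difference $f_k - f_l$ still vanish outside $\Omega$, so that \Cref{lsc} is literally applicable (its statement concerns $BV^\alpha_{00}(\Omega)$ and the quantities $\Var_\alpha(\,\cdot\,;\Omega) = \Var_\alpha(\chi_\Omega\,\cdot\,;\R^n)$) and the limit genuinely lands in $BV^\alpha_{00}(\Omega)$ rather than in some larger space; this is immediate from the support condition in \eqref{def:BV00}.
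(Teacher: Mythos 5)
Your argument is correct and coincides with the paper's own proof: extract the $L^1$-limit $f$, use \Cref{lsc} once to see $f\in BV^\alpha_{00}(\Omega)$, and apply \Cref{lsc} a second time to the differences $f_k-f_\ell$ to conclude $\Var_\alpha(f_k-f;\Omega)\to 0$. Your explicit remarks on the seminorm property and the preservation of the support condition are minor elaborations of steps the paper leaves implicit.
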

\begin{proof}
Let $\{f_k\}_{k=1}^{\infty}$ be a Cauchy sequence in $BV^\alpha_{00}(\Omega)$. 
Since  $f_k$ is Cauchy in $L^1(\R^n)$,
there exists  $f \in L^1(\R^n)$ with $f \equiv 0$ in $\R^n \sm \Omega$,
 such that $f_k\to f$ in $L^1(\R^n)$. 
 By \Cref{lsc}, we find that $f \in BV^\alpha_{00}(\Omega)$. 
 Using the lower semicontinuity of the variation still from \Cref{lsc}, we obtain 
\[
 \lim_{k \to \infty}\Var_\alpha(f-f_k;\Omega) \leq \lim_{k \to \infty}\liminf_{\ell \to \infty} \Var_\alpha(f_\ell-f_k;\Omega) = 0,
\]
which completes the proof.
\end{proof}

Using the weak*-convergence of Radon measures, and the arguments of the standard Rellich-Kondrachov compactness, see \cite[Theorem 5.2 \& Theorem 5.5]{EG15},
we have the following result.
\begin{proposition}[Weak compactness] \label{weak_comp}
Let $\Omega \subset \R^n$ be open and bounded. Assume $\{f_k\}_{k=1}^{\infty} \subset BV^\alpha_{00}(\Omega)$ such that 
\[
\sup_{k \geq 1} \|f_k\|_{BV^\alpha(\Omega)} < \infty.
\]
Then there exists $f \in BV_{00}^\alpha(\Omega)$ such that
\[
 \Var_\alpha(f;\R^n) \leq \liminf_{k \to \infty} \Var_\alpha(f_k;\R^n),
\]
or equivalently,
\[
 \Var_\alpha(f;\Omega) \leq \liminf_{k \to \infty} \Var_\alpha(f_k;\Omega),
\]
and there is a subsequence $\{f_{k_i}\}_{i=1}^{\infty}$ such that 
for all $p \in \left[1,\frac{n}{n-\alpha}\right)$ we have
\[
 \|f_{k_i} - f\|_{L^p(\R^n)} \xrightarrow{i \to \infty} 0.
\]

\end{proposition}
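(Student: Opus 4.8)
The plan is to follow the classical pattern of the proof of weak-$\ast$ compactness for $BV$ functions (as in \cite[Theorem 5.2 and Theorem 5.5]{EG15}), adapted to the Riesz-type fractional setting. Throughout I identify each $f_k$ with its zero extension $\chi_\Omega f_k \in L^1(\R^n)$, so that all the variations are computed on $\R^n$, and I fix a bounded open set $U \supset \overline{\Omega}$ (for instance a large ball) so that the supports stay inside a fixed compact set. The key structural input is \Cref{la:divalphaphi}, which guarantees that $\int_{\R^n} f_k \Div_\alpha \Phi\,dx$ is a well-defined continuous linear functional in $\Phi \in (C_c(\R^n;\R^n),\|\cdot\|_{L^\infty})$, together with the Structure Theorem discussion above: for each $k$ there is a Radon measure $D^\alpha f_k = \sigma_k \llcorner \mu_k$ with $|\mu_k|(\R^n) = \Var_\alpha(f_k;\Omega) \le \sup_j \|f_j\|_{BV^\alpha(\Omega)} =: M < \infty$.

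The steps, in order, are as follows. First, since $\sup_k \|f_k\|_{L^1(\R^n)} \le M$ and all $f_k$ vanish outside the fixed bounded set $\Omega$, I invoke the fractional Rellich--Kondrachov-type compactness. Concretely, by \Cref{Prop:embeddingLp} the sequence is bounded in $L^p(\R^n)$ for $p$ up to the fractional Sobolev exponent, and the compact embedding $BV^\alpha_{00}(\Omega) \hookrightarrow\hookrightarrow L^p(\R^n)$ for $p < \frac{n}{n-\alpha}$ — which is exactly what is asserted to follow from the standard Rellich--Kondrachov argument combined with the smooth approximation of \Cref{pr:smoothapprox} and the known compactness for the Riesz gradient (see \cite{CS19,SSS17}) — yields a subsequence $\{f_{k_i}\}$ and a limit $f \in L^1(\R^n)$ with $f_{k_i} \to f$ in $L^p(\R^n)$ for every such $p$, hence in $L^1(\R^n)$, and $f \equiv 0$ on $\R^n \setminus \Omega$ (this last point is immediate since $L^1$-convergence is preserved under restriction to the fixed set $\R^n \setminus \Omega$). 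Second, having $f_{k_i} \to f$ in $L^1(\R^n)$, I apply \Cref{lsc} (lower semicontinuity) to this subsequence: it gives at once that $f \in BV^\alpha_{00}(\Omega)$ and $\Var_\alpha(f;\R^n) \le \liminf_{i\to\infty}\Var_\alpha(f_{k_i};\R^n) \le \liminf_{k\to\infty}\Var_\alpha(f_k;\R^n)$, which is the first displayed inequality; the equivalent formulation in terms of $\Var_\alpha(\cdot;\Omega)$ follows from the definition $\Var_\alpha(\cdot;\Omega) = \Var_\alpha(\chi_\Omega\,\cdot\,;\R^n)$ and the fact that all functions in sight already vanish outside $\Omega$.

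Alternatively, and closer to the title ``weak compactness,'' I could argue the $\Var_\alpha$-bound directly: the measures $\mu_{k_i}$ (equivalently the vector measures $D^\alpha f_{k_i} = \sigma_{k_i}\llcorner\mu_{k_i}$) have total variation bounded by $M$ and are supported in a fixed compact set, so by the Banach--Alaoglu / Riesz representation theorem a further subsequence converges weakly-$\ast$ in $\mathcal{M}(\R^n;\R^n)$ to some vector Radon measure $\lambda$; testing against $\Phi \in C_c^1(\R^n;\R^n)$ and using $\int f_{k_i}\Div_\alpha \Phi \to \int f \Div_\alpha\Phi$ (justified because $f_{k_i}\to f$ in $L^1$ and $\Div_\alpha\Phi \in L^\infty$ by \Cref{la:divalphaphi}) identifies $\lambda = D^\alpha f$, whence $|D^\alpha f|(\R^n) \le \liminf |\mu_{k_i}|(\R^n) = \liminf \Var_\alpha(f_{k_i};\R^n)$ by lower semicontinuity of total variation under weak-$\ast$ convergence. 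Either route closes the proof.

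The main obstacle is the compact embedding step, i.e. establishing that an $L^1$-convergent (indeed $L^p$-convergent for subcritical $p$) subsequence actually exists. Unlike the lower semicontinuity statement, which is pure duality and already granted in \Cref{lsc}, compactness requires genuine a priori control — one cannot extract it from the $BV^\alpha$-bound by soft functional analysis alone because bounded sets in $BV^\alpha_{00}(\Omega)$ are not weakly sequentially compact in a reflexive space. The way through is to pass to the smooth approximants $f_{k,j} \in C_c^\infty(\R^n)$ from \Cref{pr:smoothapprox} with uniformly bounded $\|f_{k,j}\|_{L^1} + \|D^\alpha f_{k,j}\|_{L^1}$, use the equivalence (for $\alpha < 1$) of the Riesz-gradient Sobolev space with a Gagliardo-type fractional Sobolev space or directly the fractional Sobolev/Gagliardo--Nirenberg estimates of \cite{SSS17,CS19} to get uniform smallness of $L^1$-translations, and then apply the Fréchet--Kolmogorov criterion; a diagonal argument over the approximation parameter then delivers the convergent subsequence of the original $f_k$. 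This is exactly the ``standard Rellich--Kondrachov'' argument referenced in the statement, and it is where all the real work of the proposition is concentrated.
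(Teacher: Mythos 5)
Your proposal is correct and follows essentially the same route the paper takes: the paper gives no written proof but explicitly appeals to weak-$\ast$ convergence of the Radon measures $D^\alpha f_k$ and the standard Rellich--Kondrachov arguments of \cite[Theorems 5.2 and 5.5]{EG15}, which is precisely your combination of the compact embedding into $L^p$ for subcritical $p$ (via smooth approximants, translation estimates, and Fr\'echet--Kolmogorov) with the lower semicontinuity of \Cref{lsc}. You correctly identify the compactness step as the only place requiring real work, and your sketch of it is the intended one.
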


Lastly, as in the local case where we know that $H^{1,1}(\Omega)$ is a  subspace of $BV(\Omega)$ (where $H^{1,1}(\Omega)$ is the space of functions $f \in L^1(\Omega)$ such that $Df \in L^1(\Omega)$), the corresponding result for the fractional situation holds as well.

\begin{lemma}\label{la:Halpha1}
Let $f \in  H^{\alpha,1}(\R^n)$, i.e. {$f\in L^1(\R^n)$ and $D^\alpha f\in L^1(\R^n;\R^n)$}.
Assume additionally that $f \equiv 0$ in $\R^n \sm \Omega$. Then $f \in BV_{00}^\alpha(\Omega)$.
\end{lemma}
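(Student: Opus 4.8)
The plan is to show that the integration-by-parts formula \eqref{eq:IBPR} extends from test functions to $f \in H^{\alpha,1}(\R^n)$, thereby realizing $D^\alpha f \in L^1(\R^n;\R^n)$ as the distributional derivative in the sense used to define $\Var_\alpha$, and then reading off that $\Var_\alpha(f;\R^n) = \|D^\alpha f\|_{L^1(\R^n;\R^n)} < \infty$. Concretely, first I would fix $\Phi \in \Cc^1(\R^n;\R^n)$ with $\|\Phi\|_{L^\infty(\R^n)} \leq 1$ and aim to prove
\[
\int_{\R^n} f\, \divR \Phi\, dx = -\int_{\R^n} D^\alpha f \cdot \Phi\, dx.
\]
Both sides are well defined: the left by \Cref{la:divalphaphi} (since $\divR \Phi \in L^\infty \subseteq L^\infty$ and $f \in L^1$), the right since $D^\alpha f \in L^1$ and $\Phi \in L^\infty$. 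Taking the supremum over such $\Phi$ then gives $\Var_\alpha(f;\R^n) \leq \|D^\alpha f\|_{L^1(\R^n;\R^n)} < \infty$; combined with the hypothesis $f \equiv 0$ outside $\Omega$ (so that $\Var_\alpha(f;\Omega) = \Var_\alpha(\chi_\Omega f;\R^n) = \Var_\alpha(f;\R^n)$) and $f \in L^1$, this yields $f \in BV^\alpha_{00}(\Omega)$.

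To prove the extended integration-by-parts identity I would use a density/approximation argument. Mollify: set $f_\eps := f \ast \eta_\eps \in \Cc^\infty(\R^n)$, which converges to $f$ in $L^1(\R^n)$. Since the Riesz gradient commutes with convolution, $D^\alpha f_\eps = (D^\alpha f) \ast \eta_\eps \to D^\alpha f$ in $L^1(\R^n;\R^n)$ as well (here I use $D^\alpha f \in L^1$). Because $f_\eps$ is smooth but not compactly supported, I would additionally truncate with a cutoff $\chi_R$ (smooth, $\equiv 1$ on $B(0,R)$, supported in $B(0,2R)$) to bring everything into $\Cc^\infty_c$, apply \eqref{eq:IBPR} to $\chi_R f_\eps$ and $\Phi$, and then pass to the limit $R \to \infty$ followed by $\eps \to 0$; the commutator terms arising from $D^\alpha(\chi_R f_\eps) - \chi_R D^\alpha f_\eps$ must be controlled and shown to vanish. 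Alternatively — and more cleanly — one observes that for smooth rapidly decaying $g$ the identity $\int f_\eps\, \divR \Phi = -\int D^\alpha f_\eps \cdot \Phi$ follows directly from the Fourier-transform/Plancherel derivation of \eqref{eq:IBPR} applied to the Schwartz-class-like object $f_\eps$ paired against $\Phi$, without any compact-support truncation, since $\divR \Phi, \Phi \in L^\infty \cap L^1$ (again by \Cref{la:divalphaphi}) and $f_\eps \in L^1 \cap L^\infty$ with $D^\alpha f_\eps \in L^1$; then pass $\eps \to 0$ using $f_\eps \to f$ in $L^1$ against $\divR \Phi \in L^\infty$ and $D^\alpha f_\eps \to D^\alpha f$ in $L^1$ against $\Phi \in L^\infty$.

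The main obstacle I anticipate is the justification of the integration-by-parts formula precisely at the level of regularity $f \in L^1$ with $D^\alpha f \in L^1$ — i.e., making the approximation rigorous without inadvertently assuming extra decay or integrability of $f$. The subtlety is that $D^\alpha = D \circ I^{1-\alpha}$ is genuinely nonlocal, so truncation in space does not localize it, and one cannot blindly integrate by parts with the pointwise kernel representation \eqref{eq:integralrep} unless the relevant double integrals converge absolutely; checking Fubini-admissibility for the pairing $\int\!\!\int \frac{(f(x)-f(y))\cdot(\text{stuff})}{|x-y|^{n+\alpha+1}}$ is the delicate point. I expect the convolution route is the right one because mollification interacts trivially with the Fourier multiplier defining $D^\alpha$ and with the $L^1$ topology, reducing everything to the already-established smooth-function identity \eqref{eq:IBPR} plus two elementary limit passages. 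One mild additional remark: the conclusion actually shows more, namely $\Var_\alpha(f;\Omega) = \|D^\alpha f\|_{L^1}$ and that the distributional derivative measure equals $D^\alpha f\,\leb^n$, which is the fractional analog of $H^{1,1} \hookrightarrow BV$ with equality of total variations.
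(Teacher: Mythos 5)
Your proposal is correct and follows essentially the same route as the paper: test against $\Phi\in \Cc^1(\R^n;\R^n)$ with $\|\Phi\|_{L^\infty}\leq 1$, extend the integration-by-parts identity $\int f\,\Div_\alpha\Phi = -\int D^\alpha f\cdot\Phi$ to $f\in H^{\alpha,1}(\R^n)$, and conclude $\Var_\alpha(f;\R^n)\leq \|D^\alpha f\|_{L^1(\R^n;\R^n)}<\infty$. The only difference is that the paper justifies the identity with a bare appeal to Fubini's theorem on the kernel representation, whereas you supply the more careful mollification/Plancherel limiting argument for the same step.
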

\begin{proof}
We only need to show ${\rm Var}_{\alpha} (\chi_{\Omega} f;\R^n) <\infty$.
For any $\Phi\in \Cc^1(\mathbb{R}^n;\R^n)$ {such that  $\|\Phi\|_{L^\infty(\R^n)}\leq 1$}, we have by Fubini's theorem 
\begin{equation*}
\int_{\mathbb{R}^n} \chi_{\Omega}f\, {\rm Div}_{\alpha}\Phi 
=
-  \int_{\R^n} D^{\alpha} f\cdot  \Phi 
\leq \norm{\Phi}_{L^\infty(\R^n)} \norm{D^{\alpha} f}_{L^1(\R^n)}
\leq  \norm{{D^{\alpha}} f}_{L^1(\R^n)},
\end{equation*}
which implies that ${\rm Var}_{\alpha} (\chi_{\Omega} f;\R^n) <\infty$.
\end{proof}
\section{Fractional BV in the Gagliardo sense}\label{s:bvalphagag}
The notion of fractional BV from \Cref{s:fracbvriesz} (as in \cite{CS19}) 
is very similar to the usual $BV$, since it is essentially  a lifting by the Riesz potential.
In this section, we introduce another natural notion, which is denoted by $bv^\alpha$. 
This notion  recovers the fractional perimeter as defined by Caffarelli-Roquejoffre-Savin 
in \cite{MCRS10}. We begin by introducing a different type of fractional divergence as defined in \cite{MS18}. We stress that {related notions} were known before \cite{DGLZ13} and are classically used in the theory of Dirichlet forms, cf. \cite{H15}.

A (nonlocal) vector-field $F$ on $\R^n$ is defined as an $\Lb{n} \times \Lb{n}$-measurable map  $F: \R^n \times \R^n \to \R$, which is additionally antisymmetric, i.e. $F(x,y) = -F(y,x)$. 
As in \cite{MS18} the set of such vector-fields is denoted by $\mathcal{M}(\Ep_{od} \R^n)$, where od stands for off-diagonal and (as in the theory of Dirichlet forms)  $\Ep_{od}$ stands for a sort of one-form (we will not really use this aspect, we recommend the reader to take it as a purely notational choice).

We say that $F \in L^p(\Ep_{od} \R^n)$ if 
 $F \in \mathcal{M}(\Ep_{od} \R^n)$ and 
\[
 \|F\|_{L^p(\Ep_{od} \R^n)} := \brac{\int_{\R^n} \int_{\R^n} \frac{|F(x,y)|^p}{|x-y|^n} dx\, dy}^{\frac{1}{p}} < \infty
\]
for $p \in [1,\infty)$, and 
\[
 \|F\|_{L^\infty(\Ep_{od} \R^n)} := \esssup_{x,y \in \R^n} |F(x,y)|<\infty
\]
for $p=\infty$.
For $\Omega \subset \R^n$, we say $F \in L^p_{00}(\Ep_{od}\Omega)$ 
if $F \in L^p(\Ep_{od} \R^n)$ and $F(x,y) = 0$ for $\Lb{n}$-a.e. $x \in \R^n \sm \Omega$ (and thus for a.e. $y \in \R^n \sm \Omega$).

{The (Gagliardo sense) fractional derivative $\dG$\ ,}
which has similar properties as the gradient of a function, 
takes an $\Lb{n}$-measurable function $f: \R^n \to \R$ into a vector-field
\[
(\dG f)(x,y) :=\frac{f(x)-f(y)}{|x-y|^{\alpha}}.
 \]
Let us remark that {if one was to consider stability as $\alpha \to 1$, then it would make more sense to set 
\[
(\dG f)(x,y) :=(1-\alpha)\frac{f(x)-f(y)}{|x-y|^{\alpha}}.
\]
However, we will not use this definition in the paper, for the simplicity of presentation.
}

The scalar product of two vectorfields $F$ and $G$ is given by 
\begin{equation}\label{product.vectorfield}
( F\cdot G)(x) := \int_{\R^n} \frac{F(x,y) G(x,y)}{|x-y|^n}\, dy.
\end{equation}
The fractional divergence $\divG$ is then the formal adjoint to $-\dG$  
with respect to the $L^2(\R^n)$ scalar product, i.e.
for all  $\varphi \in \Cc^\infty(\R^n)$, we have
\begin{equation} \label{Riesz,Div,int,by,parts}
 \int_{\R^n} \div_\alpha F\, \varphi dx
 := - \int_{\R^n} F \cdot \dG \varphi dx
 =- \int_{\R^n}\int_{\R^n} \frac{F(x,y) (\varphi(x)-\varphi(y))}{\abs{x-y}^{n+\alpha}}dydx.
\end{equation}
The multiplication of a scalar function $f(x)$ and a vector field $F(x,y)$ is defined as:
\begin{equation} \label{product.scalar.vectorfield.eq}
(f F)(x,y) :=\frac{f(x)+f(y)}{2}F(x,y).
\end{equation}
Using \eqref{Riesz,Div,int,by,parts}, we can obtain
the integral formula of $\divG$.
By antisymmetry  $F(x,y) = -F(y,x)$ and the Fubini's theorem, we have
\[
\begin{split}
 \int_{\R^n}\int_{\R^n} F(x,y) \frac{\varphi(x)-\varphi(y)}{\abs{x-y}^{n+\alpha}}dydx = 2\int_{\R^n}\int_{\R^n} \frac{F(x,y)}{\abs{x-y}^{n+\alpha}}dy\, \varphi(x)\, dx, 
\end{split}
\]
which enables us to give the integral definition of $\divG F$ by
\[
(\divG F)(x) := - 2\int_{\R^n}  \frac{F(x,y)}{\abs{x-y}^{n+\alpha}} dy=- \int_{\R^n}  \frac{F(x,y)-F(y,x)}{\abs{x-y}^{n+\alpha}} dy.
\]
In what follows, by yet another slight abuse of notation we are going to use this formulation even when $F(x,y) \neq -F(y,x)$:
\begin{equation}\label{eq:divalpha}
(\divG F)(x) := - \int_{\R^n}  \frac{F(x,y)-F(y,x)}{\abs{x-y}^{n+\alpha}} dy.
\end{equation}
It was shown in \cite{MS18} how this fractional divergence naturally appears and leads to conservation laws and div-curl type results in the theory of fractional harmonic maps. 

With the Fourier transform, one can check that
\begin{equation}
(-\Delta)^{\alpha} f
=
 - c\, {\rm div}_{\alpha} (\dG f )
\end{equation}
for some constant $c=c(n,\alpha)$.

Armed with the fractional divergence $\divG$, we can define the fractional {bounded} variation in the Gagliardo sense. 
\begin{definition}[Gagliardo-type fractional BV]  \label{def:BV}
Let $f \in L^1_{loc}(\R^n)$.
For an open set $\Omega \subset \R^n$, we define
\[
 {\rm var}_\alpha(f;\Omega) := \sup \left \{\int_{\R^n} f \div_\alpha \Phi\, dx: \quad \Phi \in \Cc^\infty(\Omega\times \Omega), \quad \|\Phi\|_{{L^\infty(\R^n\times \R^n)}} \leq 1 \right \} \, .
\]
Observe that this is equivalent to 
\[
 {\rm var}_\alpha(f;\Omega) = \sup \left \{\int_{\Omega} f \div_\alpha \Phi\, dx: \quad \Phi \in \Cc^\infty(\Omega\times \Omega), \quad \|\Phi\|_{{L^\infty(\Omega\times \Omega)}} \leq 1 \right \} \, .
\]
We say that $f \in bv^\alpha(\Omega)$ if 
\[
 \|f\|_{bv^\alpha(\Omega)} := \|f\|_{L^1(\Omega)} + \var_\alpha(f;\Omega) < \infty.
\]
\end{definition}

The notion $\var_\alpha(f;\Omega)$ is well-defined by the following observations.
First, to have consistency, we observe that
\begin{lemma} \label{Gagliardo.div.Lp}
Let $\Phi \in \Cc^1(\R^n\times \R^n)$, 
then for all $\alpha \in (0,1) $ and all  $p \in [1,\infty]$,
we have 
\[
 \div_\alpha \Phi \in L^p(\R^n) .
\]
\end{lemma}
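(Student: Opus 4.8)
The plan is to mimic the proof of \Cref{la:divalphaphi} very closely, replacing the single-variable estimates with their two-variable analogues. Fix $\Phi \in \Cc^1(\R^n \times \R^n)$ and recall from \eqref{eq:divalpha} that
\[
(\divG \Phi)(x) = -\int_{\R^n} \frac{\Phi(x,y) - \Phi(y,x)}{|x-y|^{n+\alpha}}\, dy .
\]
First I would split the integral at $|x-y| = 1$. For $|x-y| \le 1$, I would use the mean value theorem: since $\Phi(x,y) - \Phi(y,x)$ vanishes when $x=y$ and $\Phi$ is $C^1$ with bounded gradient (on the compact support), one has $|\Phi(x,y)-\Phi(y,x)| \le \|\nabla \Phi\|_{L^\infty}\,|x-y|$, so the near-diagonal part of the integrand is controlled by $\|\nabla\Phi\|_{L^\infty}\,|x-y|^{1-n-\alpha}$, which is integrable over $\{|x-y|\le 1\}$ precisely because $\alpha < 1$. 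For $|x-y|\ge 1$, I would simply bound $|\Phi(x,y)-\Phi(y,x)| \le 2\|\Phi\|_{L^\infty}$ and integrate $|x-y|^{-n-\alpha}$ over $\{|x-y|\ge 1\}$, which is finite since $\alpha>0$. Combining, both pieces give a bound independent of $x$, hence
\[
\|\divG \Phi\|_{L^\infty(\R^n)} \aleq_{\alpha} \|\Phi\|_{L^\infty(\R^n\times\R^n)} + \|\nabla \Phi\|_{L^\infty(\R^n\times\R^n)} .
\]

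Next, as in \Cref{la:divalphaphi}, it remains to show $\divG \Phi \in L^1(\R^n)$; then $L^p$ membership for all $p \in [1,\infty]$ follows by interpolation. Here I would use that $\Phi$ has compact support, say $\supp \Phi \subset B(0,R/2)\times B(0,R/2)$ for some $R \ge 1$. For $x \in \R^n \sm B(0,R)$, both $\Phi(x,y)$ and $\Phi(y,x)$ can only be nonzero when $y \in B(0,R/2)$ (and in that regime $|x-y| \ge R/2 \ge 1/2$), so
\[
|\divG \Phi(x)| \aleq_{\alpha} \int_{B(0,R/2)} \frac{|\Phi(x,y)| + |\Phi(y,x)|}{|x-y|^{n+\alpha}}\, dy .
\]
Then by Fubini, integrating $x$ over $\R^n \sm B(0,R)$ and using $\int_{\{|x-y|\ge R/2\}} |x-y|^{-n-\alpha}\,dx \aleq_\alpha R^{-\alpha} < \infty$, one gets $\|\divG \Phi\|_{L^1(\R^n \sm B(0,R))} \aleq_\alpha \|\Phi\|_{L^1(\R^n\times\R^n)}$ (or a suitable sup-norm times the measure of $B(0,R/2)$). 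On the ball $B(0,R)$ we already have $\divG \Phi \in L^\infty(B(0,R)) \subset L^1(B(0,R))$ from the first step. Adding the two contributions yields $\divG \Phi \in L^1(\R^n)$, and interpolation finishes the proof.

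The only point requiring a little care — the "main obstacle," though it is mild — is the near-diagonal estimate: one must genuinely exploit the antisymmetric difference $\Phi(x,y)-\Phi(y,x)$ (equivalently, that this quantity vanishes on the diagonal) to gain the extra factor $|x-y|$ that upgrades the non-integrable weight $|x-y|^{-n-\alpha}$ to the integrable $|x-y|^{1-n-\alpha}$; a naive bound $|\Phi(x,y)-\Phi(y,x)|\le 2\|\Phi\|_\infty$ near the diagonal would fail since $\alpha \in (0,1)$ is not enough to make $|x-y|^{-n-\alpha}$ locally integrable. This is exactly the two-variable counterpart of the $\min\{1,|x-y|\}$ trick used in \Cref{la:divalphaphi}, and it is the reason the hypothesis $\Phi \in \Cc^1$ (rather than merely $\Cc^0$) is needed.
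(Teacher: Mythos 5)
Your proposal is correct, and both the splitting into an $L^\infty$ bound plus an $L^1$ bound (followed by interpolation) and the key near-diagonal estimate $|\Phi(x,y)-\Phi(y,x)|\aleq \|D\Phi\|_{L^\infty}|x-y|$ are exactly what the paper does; your identification of the antisymmetric difference vanishing on the diagonal as the essential point is also the right diagnosis. The one place you diverge is the $L^1$ step: you transplant the Fubini argument over the exterior region $\R^n\sm B(0,R)$ from \Cref{la:divalphaphi}, but in the Gagliardo setting this is unnecessary --- if $\supp\Phi\subset B(0,M)\times B(0,M)$ and $x\notin B(0,M)$, then $\Phi(x,y)$ and $\Phi(y,x)$ vanish for \emph{every} $y$, so $\div_\alpha\Phi(x)=0$ identically outside $B(0,M)$ (your exterior integral is an upper bound for a quantity that is zero). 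This is a genuine structural difference from the Riesz case, where $\Div_\alpha F$ has a nontrivial polynomially decaying tail outside the support of $F$ and the Fubini computation is really needed. The paper exploits this: it simply integrates the Lipschitz bound $\|D\Phi\|_{L^\infty}|x-y|^{1-n-\alpha}$ over $B(0,M)\times B(0,M)$, which is finite since $\alpha<1$, and is done. Your argument still closes (harmlessly so, since the far-field contribution is zero), but recognizing the compact support of $\div_\alpha\Phi$ shortens the proof and clarifies why the two lemmas are not quite parallel.
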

Observe that we exclude the case $\alpha = 1$ since  $\div_\alpha \Phi$ is not well defined for $\alpha =1$.
A multiplication with $(1-\alpha)$ would lead to a stable theory as $\alpha \to 1$.  
\begin{proof}
Observe that by differentiability of $\Phi$,
\[
 |\Phi(x,y) -\Phi(y,x)| \leq |\Phi(x,y) -\Phi(x,x)| + |\Phi(x,x)-\Phi(y,x)| \leq 2\|D\Phi\|_{L^\infty(\R^n \times \R^n)}\, |x-y|.
\]
Then, using a similar argument as in Lemma \ref{la:divalphaphi}, we have
\begin{equation}
\begin{aligned}
\abs{\left(\div_\alpha \Phi \right)(x)} 
&\leq 2 
\brac{\| \Phi  \|_{L^\infty(\R^n\times \R^n)}  +  \|D \Phi \|_{L^\infty(\R^n \times \R^n)} }
\int_{\R^n} \frac{\min\{1,|x-y|\} }{|x-y|^{n+\alpha}} dy
\\
&
\aleq_{\alpha} 
 \brac{\| \Phi  \|_{L^\infty(\R^n \times \R^n)}  +  \|D \Phi \|_{L^\infty(\R^n \times \R^n)} },
\end{aligned}
\end{equation}
which implies that
$
 \|\div_\alpha \Phi\|_{L^\infty(\R^n)} < \infty. 
$
It remains to prove that $ \|\div_\alpha \Phi\|_{L^1(\R^n)}<\infty$.
Then the required result can be obtained using interpolation.
Since $\Phi$ is compactly supported, we may suppose 
${\rm supp } \, {\Phi} \subseteq {B(0,M)\times B(0,M)}$
for some $M > 0$. 
Thus, we obtain
\begin{equation} \label{dsf.is.L1}
\begin{aligned}
\norm{\div_\alpha \Phi}_{L^1(\R^n)}&=
\int_{{B(0,M)}} \abs{ \int_{{B(0,M)}} \frac{\Phi(x,y) - \Phi(y,x)}{\abs{x-y}^{n+\alpha} } dy } dx
\\
&
\aleq  \| D \Phi  \|_{L^\infty(\R^n \times \R^n)} \int_{{B(0,M)}} \int_{{B(0,M)}} \frac{1}{\abs{x-y}^{n+\alpha -1} } dy  dx < \infty,
\end{aligned}
\end{equation}
which finishes the proof.
\end{proof}
We introduce the definition of space $W^{\alpha,1}(\Omega)$, see \cite{MS18} for details.
\begin{definition}
Let $\Omega\subseteq\mathbb{R}^n$ be an open set.
A function $f$ is in $W^{\alpha,1}(\Omega)$ when $f\in L^1(\Omega)$ and
\begin{equation}
[f]_{W^{\alpha,1}(\Omega)}:= \int_{\Omega}\int_{\Omega} \frac{ \abs{f(x)-f(y) } }{ \abs{x-y}^{n+\alpha} } dydx <\infty. 
\end{equation} 
The norm  of $W^{\alpha,1}(\Omega)$  
is defined as
\begin{equation}
\norm{f}_{W^{\alpha,1}(\Omega)}:= \norm{f}_{L^1(\Omega)} + [f]_{W^{\alpha,1}(\Omega)}.
\end{equation}

\end{definition}
We now state our main theorem of this section, which is {in strong contrast to} the Riesz-type fractional $BV$ functions, cf. \Cref{la:Halpha1}. The fractional $BV$ space $bv^\alpha$ is actually equivalent to $W^{\alpha,1}$,
which makes this space  more tractable and probably more attainable for numerical purposes.
\begin{theorem}\label{th:bvsws1}
Let $\alpha \in (0,1)$. 
Let $\Omega \subseteq \R^n$ be any open set. Then $bv^\alpha(\Omega) = W^{\alpha,1}(\Omega)$. More precisely, for any $f \in L^1(\Omega)$ we have
\[
 \var_\alpha(f;\Omega) = [f]_{W^{\alpha,1}(\Omega)},
\]
whenever one of the two sides {are finite}.
\end{theorem}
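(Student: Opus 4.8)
The plan is to prove the identity $\var_\alpha(f;\Omega) = [f]_{W^{\alpha,1}(\Omega)}$ by establishing the two inequalities separately, treating the finiteness statement as a byproduct.

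\medskip

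\noindent\textbf{Step 1: The inequality $\var_\alpha(f;\Omega) \le [f]_{W^{\alpha,1}(\Omega)}$.} Fix $\Phi \in \Cc^\infty(\Omega\times\Omega)$ with $\|\Phi\|_{L^\infty(\Omega\times\Omega)}\le 1$. Using the integral definition \eqref{eq:divalpha} of $\divG\Phi$, Fubini's theorem, and the antisymmetrization trick already used in the derivation of \eqref{eq:divalpha}, I would write
\[
\int_\Omega f(x)\,\divG\Phi(x)\,dx = -\int_\Omega\int_\Omega \frac{f(x)\,(\Phi(x,y)-\Phi(y,x))}{|x-y|^{n+\alpha}}\,dy\,dx = -\tfrac12\int_\Omega\int_\Omega \frac{(f(x)-f(y))(\Phi(x,y)-\Phi(y,x))}{|x-y|^{n+\alpha}}\,dy\,dx,
\]
where the last step relabels $x\leftrightarrow y$ and averages. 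Since $|\Phi(x,y)-\Phi(y,x)|\le 2$, this is bounded by $[f]_{W^{\alpha,1}(\Omega)}$ provided the double integral converges absolutely — which it does when $[f]_{W^{\alpha,1}(\Omega)}<\infty$, and in the remaining case the inequality is trivial. Taking the supremum over admissible $\Phi$ gives the bound. I must be careful that the manipulations (Fubini, the relabeling) are justified; for smooth compactly supported $\Phi$ away from the diagonal there is no issue, but $\Phi$ supported in $\Omega\times\Omega$ may still be nonzero near the diagonal, so I would first observe that $f\in L^1(\Omega)$ and the compact support of $\Phi$ inside $\Omega\times\Omega$ give enough integrability, using the $\min\{1,|x-y|\}/|x-y|^{n+\alpha}$ estimate from \Cref{Gagliardo.div.Lp}.

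\medskip

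\noindent\textbf{Step 2: The inequality $[f]_{W^{\alpha,1}(\Omega)}\le \var_\alpha(f;\Omega)$.} This is the reverse direction and the main obstacle, because I must produce near-optimal test vector-fields $\Phi$. Morally the optimal choice is $\Phi(x,y) = \operatorname{sgn}(f(x)-f(y))$, which is neither smooth nor compactly supported. So the strategy is: (i) first prove the bound with $\var_\alpha$ replaced by the supremum over merely bounded measurable antisymmetric $\Phi$ supported in $\Omega\times\Omega$ with $\|\Phi\|_\infty\le 1$ — for this, exhaust $\Omega\times\Omega$ by compact sets $K_j\Subset\Omega\times\Omega$ away from the diagonal, take $\Phi_j = \operatorname{sgn}(f(x)-f(y))\,\chi_{K_j}$, plug into the (absolutely convergent on $K_j$) pairing, and let $j\to\infty$ using monotone convergence to recover $[f]_{W^{\alpha,1}(\Omega)}$ (possibly $=\infty$); (ii) then mollify/approximate each such $\Phi_j$ by functions in $\Cc^\infty(\Omega\times\Omega)$ in a way that the pairing $\int_\Omega f\,\divG\Phi\,dx$ changes by an arbitrarily small amount. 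Step (ii) is delicate: one convolves $\Phi_j$ with a smooth kernel and antisymmetrizes, using that $\Phi_j$ has compact support inside $\Omega\times\Omega$ bounded away from the diagonal so that the kernel $|x-y|^{-n-\alpha}$ is bounded on the relevant region and the convergence $\Phi_j^\varepsilon\to\Phi_j$ in $L^1$ of that region suffices to pass to the limit in the pairing. Antisymmetry is preserved by averaging $\Phi(x,y)$ against $-\Phi(y,x)$, which only decreases the $L^\infty$ norm.

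\medskip

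\noindent\textbf{Step 3: Assembling the equality and the finiteness clause.} Combining Steps 1 and 2 gives $\var_\alpha(f;\Omega) = [f]_{W^{\alpha,1}(\Omega)}$ in $[0,\infty]$, from which the displayed identity and the statement that one side is finite iff the other is follow immediately; in particular $bv^\alpha(\Omega) = W^{\alpha,1}(\Omega)$ with equal norms (after adding $\|f\|_{L^1(\Omega)}$). I expect Step 2(ii), the smooth compactly-supported approximation of the sign-type competitor while controlling the singular pairing near — but at positive distance from — the diagonal, to be the technical heart of the argument; everything else is a routine application of Fubini, the elementary estimate $|a-b|\,\operatorname{sgn}(a-b)=|a-b|$, and monotone/dominated convergence.
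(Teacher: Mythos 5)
Your proposal is correct, and while your Step 1 essentially coincides with the paper's \Cref{f.in.Wa1.equal} (the paper in fact extracts the full equality there at once, via $L^\infty=(L^1)^*$ duality, once $f\in W^{\alpha,1}(\Omega)$ is known), your Step 2 takes a genuinely different route for the hard inequality $[f]_{W^{\alpha,1}(\Omega)}\le\var_\alpha(f;\Omega)$. The paper regularizes $f$: it proves a recovery-sequence lemma (\Cref{var.a.approximate}) producing $f_k\in \Cc^\infty(\Omega)$ with $f_k\to f$ in $L^1(G)$ and $\limsup_k\var_\alpha(f_k;G)\le\var_\alpha(f;\Omega)$ for each $G\subset\subset\Omega$, applies \Cref{f.in.Wa1.equal} to the smooth $f_k$, and concludes by Fatou's lemma and exhaustion of $\Omega$. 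You instead regularize the competitor: you truncate the formally optimal test field $\operatorname{sgn}(f(x)-f(y))$ to compact sets $K_j$ at positive distance from the diagonal, where the pairing converges absolutely and monotone convergence recovers $[f]_{W^{\alpha,1}(\Omega)}$ in $[0,\infty]$, and then mollify. Your route is more direct and avoids the cutoff and mollification-commutation bookkeeping of \Cref{var.a.approximate}; the paper's route produces a recovery sequence of some independent interest. The one point in your argument that deserves an extra sentence is the limit passage in the pairing after mollification: $L^1$-convergence of $\Phi_j^\varepsilon\to\Phi_j$ alone does not pair against a merely integrable $f$, but since $\|\Phi_j^\varepsilon\|_{L^\infty}\le 1$ and all supports lie in a fixed compact set off the diagonal, the functions $h_\varepsilon(x):=\int\bigl(|\Phi_j^\varepsilon-\Phi_j|(x,y)+|\Phi_j^\varepsilon-\Phi_j|(y,x)\bigr)\,dy$ are uniformly bounded and tend to $0$ in $L^1(\R^n)$, whence $\int |f|\,h_\varepsilon\to 0$; combined with the boundedness of $|x-y|^{-n-\alpha}$ on the support this closes the argument.
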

\begin{remark}It is well known that \Cref{th:bvsws1} is false for $\alpha = 1$: e.g. take any nonempty open and bounded set $\Omega$ with finite perimeter. Then $\chi_{\Omega} \not \in W^{1,1}(\R^n)$ (e.g. because it is not continuous on almost all lines). 
However, we have $\chi_{\Omega} \in BV(\R^n)$. 
In that sense, \Cref{th:bvsws1} may be surprising at first. Let us mention that, although we are not aware of \Cref{th:bvsws1} in the literature, intuitively related observations have been made by people working with fractional perimeters.
\end{remark}
\begin{remark}
An immediate corollary is that the fractional perimeter as defined by Caffarelli-Roquejoffre-Savin, \cite{MCRS10}, $\Per_\alpha(\Omega;\R^n) = \var_\alpha(\chi_{\Omega};\R^n)$. 
Thus, the space
$bv^\alpha$ is the naturally associated notion for a fractional $BV$ space when working with that perimeter.
\end{remark}

We prove several lemmas before proving \Cref{th:bvsws1}.

\begin{lemma} \label{f.in.Wa1.equal}
Suppose $f \in W^{\alpha,1}(\Omega)$, then we have $f\in bv^{\alpha}(\Omega)$ and
$ \var_\alpha(f;\Omega) = [f]_{W^{\alpha,1}(\Omega)} $.
\end{lemma}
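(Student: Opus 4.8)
The plan is to prove the inequality $\var_\alpha(f;\Omega) \le [f]_{W^{\alpha,1}(\Omega)}$ directly from the definition of $\var_\alpha$, and then to establish the reverse inequality by choosing a near-optimal competitor $\Phi$ in the supremum. This direction — the ``$\le$'' bound — is the easy half, and the fact that $f\in bv^\alpha(\Omega)$ follows from it once the finiteness of $[f]_{W^{\alpha,1}(\Omega)}$ is assumed; the harder direction will be deferred (presumably to a subsequent lemma, with this lemma only claiming the case where the Gagliardo seminorm is finite).

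First I would take an arbitrary $\Phi \in \Cc^\infty(\Omega\times\Omega)$ with $\|\Phi\|_{L^\infty(\Omega\times\Omega)} \le 1$. Using the integral representation \eqref{eq:divalpha} of $\divG\Phi$ together with Fubini's theorem (which is justified because $\Phi$ is smooth and compactly supported inside $\Omega\times\Omega$, so the double integral converges absolutely after pairing with $f\in L^1(\Omega)$ — one can estimate the kernel near the diagonal using $|\Phi(x,y)-\Phi(y,x)|\le 2\|D\Phi\|_\infty|x-y|$ exactly as in \Cref{Gagliardo.div.Lp}), I would write
\[
\int_\Omega f(x)\,(\divG\Phi)(x)\,dx = -\int_\Omega\int_\Omega f(x)\,\frac{\Phi(x,y)-\Phi(y,x)}{|x-y|^{n+\alpha}}\,dy\,dx.
\]
By the antisymmetrization trick (swapping $x\leftrightarrow y$ in half the integral, as done right before \eqref{eq:divalpha}) this equals
\[
-\frac12\int_\Omega\int_\Omega \bigl(f(x)-f(y)\bigr)\,\frac{\Phi(x,y)-\Phi(y,x)}{|x-y|^{n+\alpha}}\,dy\,dx,
\]
and since $|\Phi(x,y)-\Phi(y,x)|\le 2$ pointwise, this is bounded above by $\int_\Omega\int_\Omega \frac{|f(x)-f(y)|}{|x-y|^{n+\alpha}}\,dy\,dx = [f]_{W^{\alpha,1}(\Omega)}$. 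Taking the supremum over such $\Phi$ gives $\var_\alpha(f;\Omega)\le [f]_{W^{\alpha,1}(\Omega)}<\infty$, hence $f\in bv^\alpha(\Omega)$.

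For the reverse inequality, the natural candidate is $\Phi(x,y) = \operatorname{sgn}\bigl(f(x)-f(y)\bigr)$, or rather a smooth, compactly supported approximation of it, since this choice makes the integrand above equal (up to the antisymmetrization) to $\frac{|f(x)-f(y)|}{|x-y|^{n+\alpha}}$. The main obstacle is the approximation argument: $\operatorname{sgn}(f(x)-f(y))$ is neither smooth nor supported inside $\Omega\times\Omega$, so I would mollify and truncate it, and then pass to the limit using dominated convergence — the dominating function being precisely $\frac{|f(x)-f(y)|}{|x-y|^{n+\alpha}}$, which is integrable by hypothesis. I expect this limiting step, and ensuring the $L^\infty$-norm-one constraint survives the mollification while still capturing the full Gagliardo seminorm on an exhausting sequence of compact subsets of $\Omega$, to be the delicate part; it is plausible the authors isolate it in a separate lemma (hence this lemma's hypothesis that $f$ already lies in $W^{\alpha,1}(\Omega)$), reducing the present proof essentially to the ``$\le$'' computation above combined with a careful construction of the competitor realizing equality.
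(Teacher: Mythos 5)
Your proposal is correct and follows essentially the same route as the paper: the paper likewise uses Fubini to rewrite $\int_\Omega f\,\divG\Phi\,dx$ as $-\int_\Omega\int_\Omega\frac{(f(x)-f(y))\Phi(x,y)}{|x-y|^{n+\alpha}}\,dy\,dx$ (absolutely convergent precisely because $f\in W^{\alpha,1}(\Omega)$) and then concludes both inequalities at once by $L^1$--$L^\infty$ duality over $\Phi\in \Cc^\infty(\Omega\times\Omega)$ with $\|\Phi\|_{L^\infty}\leq 1$. Your mollified-and-truncated $\operatorname{sgn}(f(x)-f(y))$ competitor is exactly the standard proof of that duality statement, so the ``delicate'' reverse step you anticipate deferring is in fact handled here in one line; what is genuinely deferred to \Cref{var.a.approximate} and \Cref{th:bvsws1} is the other implication, namely that $\var_\alpha(f;\Omega)<\infty$ forces $[f]_{W^{\alpha,1}(\Omega)}<\infty$.
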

\begin{proof}
Given any $\Phi\in \Cc^{1} (\Omega\times \Omega,\R)$. 
Without loss of generality, we may suppose ${\rm supp} \, \Phi \subseteq K\times K$, while $K$ is a compact subset of $\Omega$.
Then we obtain that also ${\rm div}_{\alpha} \Phi= 0$ outside $K$.

We have from Fubini's theorem (since $f \in W^{\alpha,1}(\Omega)$, both sides converge absolutely)
\begin{equation}\label{eq:sldkgjfsfksdj}
\int_{\mathbb{R}^n}f  {\rm div}_{\alpha} \Phi dx 
=
 -  \int_{\Omega}\int_{\Omega} 
\frac{f(x)-f(y)}{\abs{x-y}^\alpha}  \Phi(x,y) \frac{ dy dx}{\abs{x-y}^n}.
\end{equation}
Since 
$L^\infty(\Omega \times \Omega)$ is the dual of $L^1(\Omega \times \Omega)$,
 from \eqref{eq:sldkgjfsfksdj} we obtain 
\[
{\rm var}_{\alpha}( f  ;\Omega) 
=[f]_{W^{\alpha,1}({\Omega})},
\]
{which completes the proof.}
\end{proof}

The lemma above has not yet proven \Cref{th:bvsws1}: if we only know $f \in bv^\alpha(\Omega)$ we cannot yet apply \Cref{f.in.Wa1.equal}.
However, \Cref{f.in.Wa1.equal} does give us the direction $\var_\alpha(f;\Omega) \leq [f]_{W^{\alpha,1}(\Omega)}$ 
whenever the right-hand side is finite (because in that case we can indeed apply \Cref{f.in.Wa1.equal}). 

Next, we observe the following lower semi-continuity result.

\begin{lemma}\label{la:bvlowersem}
Suppose $f_k \in bv^{\alpha}(\Omega)$ {for all $k \in \mathbb{N}$} 
and $\norm{f_k - f}_{L^1{(\Omega)}}\to 0$ as $k\to \infty$.
Then we have
\[
 \var_\alpha(f;\Omega) \leq  \liminf_{k \to \infty} \var_\alpha(f_k;\Omega),
\]
and in particular $f \in bv^\alpha(\Omega)$.
\end{lemma}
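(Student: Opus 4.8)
The plan is to prove the lower semicontinuity of $\var_\alpha(\cdot;\Omega)$ directly from its definition as a supremum of linear functionals, using the fact that a supremum of lower semicontinuous functions is lower semicontinuous. First I would fix an arbitrary test vector-field $\Phi \in \Cc^\infty(\Omega \times \Omega)$ with $\|\Phi\|_{L^\infty(\Omega \times \Omega)} \leq 1$. By \Cref{Gagliardo.div.Lp}, $\div_\alpha \Phi \in L^\infty(\R^n)$, and since $\Phi$ is compactly supported in $\Omega \times \Omega$ one checks that $\div_\alpha \Phi$ is supported in a compact subset of $\Omega$, hence $\div_\alpha \Phi \in L^\infty(\Omega) \cap L^1(\Omega)$. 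Therefore the pairing
\[
 L^1(\Omega) \ni g \mapsto \int_{\Omega} g\, \div_\alpha \Phi\, dx
\]
is a bounded linear functional on $L^1(\Omega)$, and in particular it is continuous with respect to $L^1(\Omega)$-convergence.

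Next I would exploit the hypothesis $\|f_k - f\|_{L^1(\Omega)} \to 0$: for the fixed $\Phi$ above,
\[
 \int_{\Omega} f\, \div_\alpha \Phi\, dx = \lim_{k \to \infty} \int_{\Omega} f_k\, \div_\alpha \Phi\, dx \leq \liminf_{k \to \infty} \var_\alpha(f_k;\Omega),
\]
where the last inequality holds because $\int_{\Omega} f_k \div_\alpha \Phi\, dx \leq \var_\alpha(f_k;\Omega)$ for every $k$ by definition of $\var_\alpha$ (the admissible competitor $\Phi$ has sup-norm at most $1$). Since $\Phi$ was an arbitrary admissible test field, I then take the supremum over all such $\Phi$ on the left-hand side, which yields
\[
 \var_\alpha(f;\Omega) = \sup_{\Phi} \int_{\Omega} f\, \div_\alpha \Phi\, dx \leq \liminf_{k \to \infty} \var_\alpha(f_k;\Omega).
\]
Finally, $f \in bv^\alpha(\Omega)$ follows because $f \in L^1(\Omega)$ (it is the $L^1$-limit of the $f_k$, or one may simply note the $f_k \in L^1(\Omega)$ and the limit is taken in $L^1$) and $\var_\alpha(f;\Omega) < \infty$ provided the $\liminf$ on the right is finite; if that $\liminf$ is $+\infty$ there is nothing to prove for the inequality, and one still has $f \in L^1(\Omega)$, though membership in $bv^\alpha(\Omega)$ is only meaningful in the finite case.

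The argument is essentially routine once one observes that $\div_\alpha \Phi$ is an honest $L^\infty \cap L^1$ function supported compactly in $\Omega$, so I do not anticipate a serious obstacle. The one point requiring a little care is the passage to the limit inside the integral: I would justify it by Hölder's inequality, $|\int_{\Omega}(f_k - f)\div_\alpha\Phi\,dx| \leq \|f_k - f\|_{L^1(\Omega)}\|\div_\alpha\Phi\|_{L^\infty(\Omega)}$, which goes to zero since the second factor is a fixed finite constant (depending on $\Phi$ and $\alpha$, via \Cref{Gagliardo.div.Lp}) and the first factor tends to zero by hypothesis. No use of Radon measures or weak-$*$ compactness is needed here, in contrast to the Riesz-type setting of \Cref{lsc}; the Gagliardo divergence acting on smooth compactly supported test fields already lands in $L^\infty$, which streamlines everything.
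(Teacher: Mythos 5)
Your proposal is correct and follows essentially the same route as the paper: fix an admissible test field $\Phi$, use \Cref{Gagliardo.div.Lp} to get $\div_\alpha\Phi\in L^\infty$, pass to the limit in the pairing via $\abs{\int_\Omega(f_k-f)\div_\alpha\Phi\,dx}\leq\norm{f_k-f}_{L^1(\Omega)}\norm{\div_\alpha\Phi}_{L^\infty(\Omega)}$, and take the supremum over $\Phi$. The additional observations (compact support of $\div_\alpha\Phi$, the discussion of an infinite $\liminf$) are fine but not needed beyond what the paper does.
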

\begin{proof}
Consider any  $\Phi\in \Cc^1(\Omega\times\Omega)$ with $\norm{\Phi}_{{L^\infty(\Omega\times \Omega)}}\leq 1$.
Since $f_k\to f$ in $L^1(\Omega)$, 
and ${\rm div}_{\alpha}\Phi$ is bounded by Lemma \ref{Gagliardo.div.Lp},
we have $\norm{f_k{\rm div}_{\alpha}\Phi - f {\rm div}_{\alpha}\Phi}_{L^1{(\Omega)}}\to 0$.
Thus, we have
\begin{equation}
\int_{\Omega} f {\rm div}_{\alpha}\Phi dx
= \lim_{k\to\infty}  \int_{\Omega} f_k {\rm div}_{\alpha}\Phi dx 
\leq \liminf_{k\to\infty} {\rm var}_{\alpha}(f_k;\Omega).
\end{equation}
Taking the supremum over all admissible $\Phi$, we have
\begin{equation}
{\rm var}_{\alpha}(f;\Omega)
\leq \liminf_{k\to\infty} {\rm var}_{\alpha}(f_k;\Omega),
\end{equation}
{which completes the proof.}
\end{proof}

To prove $\var_\alpha(f;\Omega) \geq [f]_{W^{\alpha,1}(\Omega)}$ 
(whenever the left-hand side is finite),
 the last missing ingredient is the following recovery sequence result. 
In the following we say  that a set $G$ is compactly contained in a set $\Omega$, in symbols $G \subset \subset \Omega$, if $G$ is bounded and $\overline{G} \subset \Omega$.
 \begin{lemma}  \label{var.a.approximate}
    Let $\Omega \subseteq \R^n$ {be any} open set. Assume $f \in L^1(\Omega)$ with $\var_\alpha(f;\Omega) < \infty$. Then for any open $G \subset \subset \Omega$ there exists $f_k \in \Cc^\infty(\Omega)${, for all $k \in \mathbb{N}$,} such that 
    \[
    f_k \to f \quad \text{in $L^1(G)$}
    \]
    and
    \[
    \limsup_{k \to \infty}\var_\alpha(f_k;G) \leq \var_\alpha(f;\Omega). 
    \]
\end{lemma}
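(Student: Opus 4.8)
The plan is to mollify $f$ in the usual way, but with careful attention to the fact that the Gagliardo energy $\var_\alpha$ involves a double integral over $\Omega \times \Omega$, so standard convolution estimates do not immediately control the nonlocal interaction across the boundary. First I would fix an open $G \subset\subset \Omega$ and pick an intermediate open set $H$ with $G \subset\subset H \subset\subset \Omega$, together with $\eps_0 > 0$ smaller than $\dist(\overline{H},\partial\Omega)$ and $\dist(\overline{G},\partial H)$. For $\eps < \eps_0$ set $f_\eps := f \ast \eta_\eps$, which is well defined and smooth on $H$; after multiplying by a cutoff $\psi \in \Cc^\infty(\Omega)$ with $\psi \equiv 1$ on $\overline{G}$ and $\supp\psi \subset H$ we get genuine $\Cc^\infty(\Omega)$ functions (this is harmless since $\var_\alpha(\cdot;G)$ only sees values on $G$, where $\psi \equiv 1$, and the cross-terms with $\supp\psi$ are at positive distance from $G$ — though one must check that multiplying by $\psi$ does not increase the $G$-energy, which follows because for $x,y \in G$, $(\psi f_\eps)(x)-(\psi f_\eps)(y) = f_\eps(x)-f_\eps(y)$). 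The $L^1(G)$-convergence $f_\eps \to f$ is standard.

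The heart of the matter is the energy estimate $\limsup_{\eps \to 0}\var_\alpha(f_\eps;G) \le \var_\alpha(f;\Omega)$. Here I would \emph{not} try to prove $[f_\eps]_{W^{\alpha,1}(G)} \le [f]_{W^{\alpha,1}(\Omega)}$ by Jensen directly on the quotient (that gives $[f_\eps]_{W^{\alpha,1}(G)} \le [f]_{W^{\alpha,1}(H_\eps)}$ where $H_\eps$ is an $\eps$-neighborhood of $G$ — already enough, actually, if we knew $\var_\alpha = [\cdot]_{W^{\alpha,1}}$, but that is exactly \Cref{th:bvsws1} which we are in the middle of proving, so we cannot invoke it). Instead I would argue at the level of test vector-fields: given $\Phi \in \Cc^\infty(G\times G)$ with $\|\Phi\|_{L^\infty} \le 1$, write
\[
\int_G f_\eps \, \div_\alpha \Phi \, dx = \int_{\R^n} (f\ast\eta_\eps)(x)\,\div_\alpha\Phi(x)\,dx = \int_{\R^n} f(z) \,(\div_\alpha\Phi)\ast\eta_\eps(z)\,dz,
\]
using that convolution commutes with the adjoint pairing (Fubini, justified since $\div_\alpha\Phi \in L^1 \cap L^\infty$ by \Cref{Gagliardo.div.Lp} and $f \in L^1$). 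The key observation is that $(\div_\alpha\Phi)\ast\eta_\eps = \div_\alpha(\Phi_\eps)$ where $\Phi_\eps(x,y) := \int \eta_\eps(w)\,\Phi(x-w,y-w)\,dw$ is the ``diagonal'' mollification of $\Phi$ — this identity holds because $\div_\alpha$ is a convolution-type (translation-invariant in the joint variable) operator, i.e. $\div_\alpha(\tau_w\Phi) = \tau_w(\div_\alpha\Phi)$ where $\tau_w$ translates both arguments by $w$. Then $\Phi_\eps \in \Cc^\infty(H\times H)$ for $\eps < \eps_0$, $\supp\Phi_\eps \subset\subset \Omega\times\Omega$, and $\|\Phi_\eps\|_{L^\infty} \le \|\Phi\|_{L^\infty} \le 1$ since $\eta_\eps$ is a probability density. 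Hence $\int_G f_\eps\,\div_\alpha\Phi\,dx = \int_\Omega f\,\div_\alpha\Phi_\eps\,dx \le \var_\alpha(f;\Omega)$, and taking the supremum over $\Phi$ gives $\var_\alpha(f_\eps;G) \le \var_\alpha(f;\Omega)$ — in fact with no $\limsup$ needed and even without the cutoff, since the translated test field automatically lands in $\Omega\times\Omega$.

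The main obstacle I anticipate is the bookkeeping around the cutoff $\psi$ versus the clean translation argument: the translation-of-test-fields argument gives $\var_\alpha(f\ast\eta_\eps; G) \le \var_\alpha(f;\Omega)$ directly, but $f\ast\eta_\eps$ is smooth only on a neighborhood of $G$, not globally on $\Omega$, so to land in $\Cc^\infty(\Omega)$ one genuinely needs the cutoff and must then re-examine whether $\var_\alpha(\psi(f\ast\eta_\eps);G) \le \var_\alpha(f\ast\eta_\eps;G)$; as noted this holds because $\psi\equiv 1$ on a neighborhood of $\overline{G}$, so test fields supported in $G\times G$ pair identically with $\psi(f\ast\eta_\eps)$ and with $f\ast\eta_\eps$. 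A secondary point requiring care is the Fubini/commutation step: one must confirm that $\int_{\R^n\times\R^n}\frac{|\Phi(x,y)-\Phi(y,x)|}{|x-y|^{n+\alpha}}\,|f\ast\eta_\eps(x)|\,dy\,dx < \infty$ to legitimately interchange, which is exactly the content of $\div_\alpha\Phi \in L^1$ from \Cref{Gagliardo.div.Lp} combined with $f\ast\eta_\eps \in L^\infty \cap L^1$. Everything else — $L^1$-convergence, smoothness, the probability-measure bound on $\|\Phi_\eps\|_{L^\infty}$ — is routine.
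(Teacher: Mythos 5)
Your proposal is correct and follows essentially the same route as the paper: the key step in both is the diagonal mollification $\eta_\eps \ast \Phi$ of the test vector field, which preserves the $L^\infty$ bound, the compact support in $\Omega\times\Omega$, and satisfies $(\div_\alpha\Phi)\ast\eta_\eps = \div_\alpha(\eta_\eps\ast\Phi)$. The only (minor) organizational difference is that the paper mollifies $f\zeta$ and then absorbs the cutoff into the test function via the split $\zeta(x') = \tfrac12(\zeta(x')+\zeta(y'))+\tfrac12(\zeta(x')-\zeta(y'))$, whereas you apply the cutoff after mollifying, which sidesteps that step since $\div_\alpha\Phi$ is supported in $G$ where $\psi\equiv 1$.
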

\begin{proof}
Since $G\subset\subset \Omega$, there exist open sets $U$ and $V$ 
such that $G\subset\subset U \subset\subset V \subset\subset \Omega$.
Pick $\zeta\in \Cc^{\infty}(V)$ 
such that $\zeta = 1$ on $U$ and $\zeta \leq 1$ in all of $\R^n$. 
Take $\eps_0 > 0$ such that $B_{\varepsilon}(G) := \{z \in \R^n: \dist(z,G) < \eps\} \subset\subset U$ 
and  $B_{\varepsilon}(V)\subset\subset \Omega$ for any $\eps \in (0,\eps_0)$.
Let $\eta \in \Cc^\infty(B(0,1))$, $\int \eta = 1$, be the usual mollifier kernel and set $\eta_\eps := \eps^{-n} \eta(\cdot/\eps)$. 
For $\eps \in (0,\eps_0)$, we define 
$f_{\varepsilon}
:=
\eta_{\varepsilon} * (f \zeta)
$,
then ${\rm supp} f_{\varepsilon}  \subseteq \Omega$. 
Given any $\Phi \in C_c^1(G\times G)$ with $\|\Phi\|_{L^\infty(\R^n \times \R^n)} \leq 1$. 
Using \eqref{eq:divalpha}, the Fubini's theorem and the substitution $x' = x-z$ and $y' = y-z$, we obtain
\begin{equation} \label{var,estimate,f,epsilon}
\begin{aligned}
& 
\int_{\mathbb{R}^n} f_{\varepsilon}\,  {\rm div}_{\alpha} \Phi  dx\\
&
= 
\int_{G} \eta_{\varepsilon} * (f \zeta) {\rm div}_{\alpha}\, \Phi  dx
\\
&
=
\int_G
\brac{\int_{{B(0,\varepsilon)}} \eta_{\varepsilon}(z) \, f(x-z)\, \zeta(x-z) dz }
\brac{-\int_G \frac{ \Phi(x,y)-\Phi(y,x)  }{ \abs{x-y}^{n+\alpha}  } dy } dx
\\
&
=
- \int_{ G } \int_{G} \int_{{B(0,\varepsilon)}} f(x-z)\, \zeta(x-z)\, \eta_{\varepsilon}(z) 
\, \frac{ \Phi(x,y)-\Phi(y,x)  }{ \abs{x-y}^{n+\alpha}  } dy\, dx\, dz
\\
&
=
- \int_{B_{\varepsilon}(G) } \int_{B_{\varepsilon}(G) } 
\int_{ {B(0,\varepsilon)} }
 f(x')\, \zeta(x')\,  \eta_{\varepsilon}(z) 
\,  \frac{ \Phi(x'+z,y'+z)-\Phi(y'+z,x'+z)  }{ \abs{x'-y'}^{n+\alpha}  } dz\, dy'\, dx'.
\\
\end{aligned}
\end{equation}
Notice that since $\eta_{\varepsilon}(-z)= \eta_{\varepsilon}(z)$, we have
\begin{equation}\label{eq:Gagmollifier}
 \left(\eta_{\varepsilon}*  \Phi\right)(x',y') := \int_{ {B(0,\varepsilon)} } \eta_{\varepsilon}(z) 
\, \brac{ \Phi(x'+z,y'+z)-\Phi(y'+z,x'+z)  } dz.
\end{equation}
Thus, by \eqref{var,estimate,f,epsilon} we have
\[
\begin{split}
\int_{\mathbb{R}^n} f_{\varepsilon}  {\rm div}_{\alpha} \Phi  dx
=&
- \int_{B_{\varepsilon}(G) } \int_{B_{\varepsilon}(G) } 
 f(x')\, \zeta(x')\,  
\,  \frac{ \left(\eta_{\varepsilon}*  \Phi\right)(x',y')  }{ \abs{x'-y'}^{n+\alpha}  } dy'dx'
\\
=&- \int_{B_{\varepsilon}(G) } \int_{B_{\varepsilon}(G) }  f(x')\, \frac{1}{2} (\zeta(x')+\zeta(y'))\,  \frac{ \left(\eta_{\varepsilon}*  \Phi\right)(x',y')  }{ \abs{x'-y'}^{n+\alpha}  } dy'dx'\\
&- \int_{B_{\varepsilon}(G) } \int_{B_{\varepsilon}(G) }  f(x')\, \frac{1}{2} (\zeta(x')-\zeta(y'))\,  \frac{ \left(\eta_{\varepsilon}*  \Phi\right)(x',y')  }{ \abs{x'-y'}^{n+\alpha}  } dy'dx'.\end{split}
\]
Since $B_\eps(G) \subset U$ and $\zeta \equiv 1$ in $U$, the second term vanishes.
Setting 
\[
 \Psi_\eps (x',y') := \frac{1}{2} (\zeta(x')+\zeta(y'))\,  \left(\eta_{\varepsilon}*  \Phi\right)(x',y'), 
\]
we see that $ \Psi_\eps \in \Cc^\infty(\Omega \times \Omega)$,
$\Psi_\eps(x',y') = - \Psi_\eps(y',x')$,
 and  
\[
\int_{\mathbb{R}^n} f_{\varepsilon}\,  \div_{\alpha} \Phi  dx
= \int_{\Omega }  f\, \div_{\alpha} \Psi_{\eps}\, dx'.
\]
It is easy to check that 
\[
 \abs{\frac{1}{2} (\zeta(x')+\zeta(y'))\,  \left(\eta_{\varepsilon}*  \Phi\right)(x',y') } 
  \leq \abs{\left(\eta_{\varepsilon}*  \Phi\right)(x',y')} \leq \|\Phi \|_{L^\infty(\R^n \times \R^n)} \leq 1.
\]
Thus, we have  shown that for any $\Phi \in \Cc^\infty(G \times G)$ with $\|\Phi \|_{L^\infty(\R^n \times \R^n)} \leq 1$, and any $\eps < \eps_0$, there is
\[
\int_{\mathbb{R}^n} f_{\varepsilon}  {\rm div}_{\alpha} \Phi  dx \leq  \var_\alpha(f;\Omega).
\]
Taking the supremum over such test-functions $\Phi$ we obtain 
\[
 \sup_{\eps \in (0,\eps_0)} \var_\eps(f_\eps;G) \leq  \var_\alpha(f;\Omega).
\]
In particular, 
\[
 \limsup_{\eps \to 0} \var_\eps(f_\eps;G) \leq  \var_\alpha(f;\Omega).
\]
By usual mollifier arguments we have $f_{\varepsilon} \to \zeta f$ in $L^1(\R^n)$ as $ \varepsilon\to 0$.
Since $\zeta \equiv 1$ in $G$,
we have $f_{\varepsilon} \to f$ in $L^1(G)$ as $ \varepsilon\to 0$.
\end{proof}

We now finish the proof of the main theorem.

\begin{proof}[Proof of \Cref{th:bvsws1}]
Let $f \in L^1(\Omega)$. 
In \Cref{f.in.Wa1.equal}, {we have proved} 
$
 [f]_{W^{\alpha,1}(\Omega)} \geq \var_{\alpha}(f;\Omega),
$
whenever the left-hand side is finite.
So we only need to establish
$
 [f]_{W^{\alpha,1}(\Omega)} \leq \var_{\alpha}(f;\Omega),
$
whenever the right-hand side is finite. 

Given any $G \subset \subset \Omega$, 
we can take a sequence $\{f_k\}_{k=1}^{\infty}$ as stated in Lemma \ref{var.a.approximate}. 
Since $f_k\in \Cc^{\infty}(\Omega)$,
we have
$f_k \in W^{\alpha,1}(G)$, so Lemma~\ref{f.in.Wa1.equal} is applicable.
Combining Lemma \ref{f.in.Wa1.equal} and \Cref{var.a.approximate}, we find
\[
 \limsup_{k \to \infty} [f_k]_{W^{\alpha,1}(G)} \leq \limsup_{k \to \infty}\var_{\alpha}(f_k;G) \leq \var_{\alpha}(f;\Omega).
\]
Since $f_k\to f$ in $L^1(G)$, up to passing to a subsequence, we may assume that $f_k(x)$ converges to $f(x)$ a.e. in $G$.
Using Fatou's lemma, we obtain
\begin{equation}
\begin{aligned}
\int_G \int_G \frac{ \abs{f(x) - f(y)} }{\abs{x-y}^{n+\alpha}  }dydx
\leq 
\liminf_{k\to\infty}
\int_G \int_G \frac{ \abs{f_k (x) - f_k (y)} }{\abs{x-y}^{n+\alpha}} dydx.
\end{aligned}
\end{equation}
Thus, we obtain 
\begin{equation} \label{Wa1.leq.var.compact}
 [f]_{W^{\alpha,1}(G)} \leq \liminf_{k \to \infty} [f_k]_{W^{\alpha,1}(G)} 
 \leq {\rm var}_{\alpha}(f;\Omega). 
\end{equation}

Picking an increasing sequence of open sets $\{ G_m \}$,
 such that $G_m\subset\subset \Omega$ and
\begin{equation}
\bigcup_{m=1}^{\infty} G_m = \Omega.
\end{equation}
Applying the above argument to $G = G_m$, we have $[f]_{W^{\alpha,1}(G_m) } \leq {\rm var}_{\alpha}(f;\Omega) $ for any $m\in\mathbb{N}$.
{Using Fatou's lemma again}, we have 
\begin{equation}
\begin{split}
[f]_{W^{\alpha,1}(\Omega) }
&
\leq
\liminf_{m\to\infty}
\int_{G_m} \int_{G_m} \frac{ \abs{f(x) - f(y)} }{\abs{x-y}^{n+\alpha}  }dydx
\\
&\leq \liminf_{m\to\infty} [f]_{W^{\alpha,1}(G_m) }
\leq {\rm var}_{\alpha}(f;\Omega),
\end{split}
\end{equation}
which concludes the proof.
\end{proof}

Using \Cref{th:bvsws1}, we can easily obtain the following result.
\begin{proposition}[Weak compactness] \label{weak_compGagl}
Let $\Omega \subset \R^n$ be an open and bounded set with Lipschitz boundary. 
Assume that $\{f_k\}_{k=1}^{\infty} \subset bv^\alpha(\Omega)$ such that
\[
\sup_{k \in \N} \|f_k\|_{bv^\alpha(\Omega)} < \infty.
\]
Then there exists $f \in bv^\alpha(\Omega)$ such that 
\[
 \var_\alpha(f;\Omega) \leq \liminf_{k \to \infty} \var_\alpha(f_k;\Omega),
\]
and there is a subsequence $\{f_{k_i}\}_{i=1}^{\infty}$,
 such that for all $ p \in \left[1,\frac{n}{n-\alpha}\right)$ 
\[
 \|f_{k_i} - f\|_{L^p(\Omega)} \xrightarrow{i \to \infty} 0 .
\]
\end{proposition}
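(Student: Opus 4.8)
The plan is to deduce this from the identification $bv^\alpha(\Omega)=W^{\alpha,1}(\Omega)$ proved in \Cref{th:bvsws1}, together with a fractional Rellich--Kondrachov compactness argument and the lower semicontinuity already available in \Cref{la:bvlowersem}. First, by \Cref{th:bvsws1} we have $\var_\alpha(f_k;\Omega)=[f_k]_{W^{\alpha,1}(\Omega)}$ for every $k$, so the hypothesis $\sup_{k}\|f_k\|_{bv^\alpha(\Omega)}<\infty$ says exactly that $\{f_k\}$ is bounded in $W^{\alpha,1}(\Omega)$. Because $\Omega$ is bounded with Lipschitz boundary, there is a bounded linear extension operator $E\colon W^{\alpha,1}(\Omega)\to W^{\alpha,1}(\R^n)$ and a fixed ball $B$ with $\Omega\subset\subset B$ such that $\supp(Ef)\subset B$ for all $f$. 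Hence $\{Ef_k\}$ is a bounded family in $W^{\alpha,1}(\R^n)$ with uniformly compact support.

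Next I would invoke two standard facts about the Gagliardo--Slobodeckij space. On the one hand, the fractional Sobolev embedding gives $\|Ef_k\|_{L^{n/(n-\alpha)}(\R^n)}\lesssim \|Ef_k\|_{W^{\alpha,1}(\R^n)}\lesssim \|f_k\|_{W^{\alpha,1}(\Omega)}$, so $\{f_k\}$ is bounded in $L^{n/(n-\alpha)}(\Omega)$. On the other hand, a bounded subset of $W^{\alpha,1}(\R^n)$ with uniformly compact support is precompact in $L^1(\R^n)$ (Fréchet--Kolmogorov, using the uniform modulus-of-continuity control that the fractional seminorm provides on translations). Thus, passing to a subsequence, $Ef_{k_i}\to g$ in $L^1(\R^n)$; setting $f:=g|_\Omega$ we get $f_{k_i}\to f$ in $L^1(\Omega)$. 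Interpolating this $L^1$-convergence against the uniform $L^{n/(n-\alpha)}(\Omega)$ bound yields, for every $q\in[1,\tfrac{n}{n-\alpha})$,
\[
\|f_{k_i}-f\|_{L^q(\Omega)}\le \|f_{k_i}-f\|_{L^1(\Omega)}^{\theta}\,\|f_{k_i}-f\|_{L^{n/(n-\alpha)}(\Omega)}^{1-\theta}\xrightarrow{i\to\infty}0,
\]
for the appropriate $\theta=\theta(q)\in(0,1]$, which gives the claimed convergence in all $L^p(\Omega)$ with $p<\tfrac{n}{n-\alpha}$.

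Finally, since $f_{k_i}\to f$ in $L^1(\Omega)$, \Cref{la:bvlowersem} applies and gives $f\in bv^\alpha(\Omega)$ together with
\[
\var_\alpha(f;\Omega)\le\liminf_{i\to\infty}\var_\alpha(f_{k_i};\Omega)\le\sup_{k\in\N}\|f_k\|_{bv^\alpha(\Omega)}<\infty,
\]
which is the assertion. I expect the only genuinely nontrivial ingredient to be the compact embedding $W^{\alpha,1}(\Omega)\hookrightarrow L^1(\Omega)$: this is exactly where the Lipschitz regularity of $\partial\Omega$ is used, via the extension operator, and it is the step that distinguishes this proof from the Riesz-type statement in \Cref{weak_comp}, where extension by zero preserves the relevant quantity and no boundary regularity is needed — here zero extension does \emph{not} control the $W^{\alpha,1}$-seminorm over $\Omega\times\Omega$. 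Everything else (the reduction through \Cref{th:bvsws1}, the interpolation, and the lower semicontinuity) is soft.
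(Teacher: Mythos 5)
Your proposal is correct and follows essentially the same route as the paper: identify $\var_\alpha$ with the $W^{\alpha,1}$-seminorm via \Cref{th:bvsws1}, extend across the Lipschitz boundary to a compactly supported bounded family in $W^{\alpha,1}(\R^n)$, apply the fractional Rellich--Kondrachov compactness to get $L^p$-convergence for $p<\frac{n}{n-\alpha}$, and conclude with the lower semicontinuity of \Cref{la:bvlowersem}. The only cosmetic difference is that the paper cites the compact embedding directly (obtaining $L^p$-convergence for all admissible $p$ in one step), whereas you re-derive it from Fr\'echet--Kolmogorov in $L^1$ plus interpolation against the Sobolev bound; both are standard and equivalent.
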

\begin{proof}
By \Cref{th:bvsws1}, we have
\[
 \var_\alpha(f_k,\Omega) = [f_k]_{W^{\alpha,1}(\Omega)}.
\]
Since $\Omega$ is a Lipschitz domain, it is regular in the sense of \cite{Z15}.
Thus, by the main result of \cite{Z15}, we can find an extension $\tilde{f}_k \in W^{\alpha,1}(\R^n)$ 
with compact support, $\tilde{f}_k = f_k$ a.e. in $\Omega$, such that 
\[
 [\tilde{f}_k]_{W^{\alpha,1}(\R^n)} \aleq [f_k]_{W^{\alpha,1}(\Omega)}.
\]
From the usual Rellich theorem, we find a subsequence $(f_{k_i})_{i \in \N}$,
such that for all $ p \in \left[1,\frac{n}{n-\alpha}\right)$ 
\[
 \|f_{k_i} - f\|_{L^p(\Omega)} \xrightarrow{i \to \infty} 0 .
\]
see \cite[Corollary 7.2]{Hitchhiker}. In particular, in view of \Cref{la:bvlowersem}, 
\[
 \var_\alpha(f;\Omega) \leq \liminf_{k \to \infty} \var_\alpha(f_k;\Omega).
\]
\end{proof}
Using \Cref{th:bvsws1}, we also readily obtain the Sobolev embedding theorem,
which can be proved 
using the extension theorem as in \Cref{weak_compGagl} above and then \cite[Theorem 9]{L14}.
\begin{proposition}
 Let $\Omega \subset \R^n$ be an open and bounded set with Lipschitz boundary. Then there exists a constant $C = C(n,\alpha) > 0$ such that for any $f \in bv^\alpha(\Omega)$,
 \[
  \|f\|_{L^{\frac{n}{n-\alpha}}(\Omega)} \leq C\, \var_\alpha(f;\Omega).
 \]
\end{proposition}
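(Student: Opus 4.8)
The plan is to reduce the Sobolev embedding for $bv^\alpha(\Omega)$ to the known Gagliardo–Sobolev inequality on $\R^n$ by means of an extension argument, exactly as in the proof of \Cref{weak_compGagl}. First I would invoke \Cref{th:bvsws1} to identify $bv^\alpha(\Omega)$ with $W^{\alpha,1}(\Omega)$ isometrically at the level of the seminorm, so that $\var_\alpha(f;\Omega) = [f]_{W^{\alpha,1}(\Omega)}$ for every $f \in bv^\alpha(\Omega)$. Thus it suffices to prove $\|f\|_{L^{n/(n-\alpha)}(\Omega)} \le C [f]_{W^{\alpha,1}(\Omega)}$ for $f \in W^{\alpha,1}(\Omega)$.

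Next, since $\Omega$ is bounded with Lipschitz boundary, it is regular in the sense of \cite{Z15}, so there is an extension operator producing $\tilde f \in W^{\alpha,1}(\R^n)$ with compact support, $\tilde f = f$ a.e.\ in $\Omega$, and
\[
 [\tilde f]_{W^{\alpha,1}(\R^n)} \le C(n,\alpha,\Omega)\, [f]_{W^{\alpha,1}(\Omega)}.
\]
Then I would apply the fractional Sobolev inequality on $\R^n$ — \cite[Theorem 9]{L14} (or equivalently \cite[Theorem 6.5]{Hitchhiker} in the $p=1$ case, noting $\alpha^* = n/(n-\alpha)$) — which gives
\[
 \|\tilde f\|_{L^{n/(n-\alpha)}(\R^n)} \le C(n,\alpha)\, [\tilde f]_{W^{\alpha,1}(\R^n)}.
\]
Restricting the left-hand side to $\Omega$ and chaining the three inequalities yields $\|f\|_{L^{n/(n-\alpha)}(\Omega)} \le C(n,\alpha,\Omega)\,\var_\alpha(f;\Omega)$, which is the claim (the statement as written records only the dependence $C=C(n,\alpha)$, but of course the extension constant also sees $\Omega$; this is harmless).

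The only genuine subtlety — and the one place to be careful rather than a real obstacle — is that the global inequality \cite[Theorem 9]{L14} controls the full norm $\|\tilde f\|_{L^{n/(n-\alpha)}(\R^n)}$ by the seminorm $[\tilde f]_{W^{\alpha,1}(\R^n)}$ alone (with no $L^1$ term), which relies on $\tilde f$ having compact support; this is exactly why the extension of \cite{Z15} is chosen compactly supported. If one instead only had the inequality with an $L^1$ term on the right, one could still conclude the stated estimate because $\var_\alpha(\cdot;\Omega)$ and $\|\cdot\|_{L^1(\Omega)}$ together bound the right-hand side, but then the constant would depend on $\|f\|_{bv^\alpha(\Omega)}$ rather than purely on $\var_\alpha$; using the compactly supported homogeneous-type inequality is what gives the clean bound by the seminorm alone. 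No other step requires more than the routine mollification/Fubini computations already carried out earlier in the section.
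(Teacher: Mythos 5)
Your proposal is correct and follows exactly the route the paper indicates: identify $\var_\alpha(f;\Omega)$ with $[f]_{W^{\alpha,1}(\Omega)}$ via \Cref{th:bvsws1}, extend to a compactly supported function on $\R^n$ using \cite{Z15}, and apply \cite[Theorem 9]{L14}. Your remark that the extension constant necessarily depends on $\Omega$ (so the constant is really $C(n,\alpha,\Omega)$ rather than $C(n,\alpha)$ as stated) is a fair and correct observation about the statement, not a gap in the argument.
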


We also  obtain the following density result, which might be known to experts (observe that this density is not true for $\alpha =1$, cf. \cite[Theorem 5.3 and remark after]{EG15}).
Using the identification in \Cref{th:bvsws1}, the extension property in \cite{Z15} , and the usual mollifcation in \cite[Theorem 2.4.]{Hitchhiker} or \cite[Lemma 26]{Mironescu05}, 
we have the following result.
\begin{corollary} Let $\alpha \in (0,1)$.
Let $\Omega \subset \R^n$ be any open and bounded set with Lipschitz boundary, then $\Cinf(\overline{\Omega})$ is dense in $bv^\alpha(\Omega)$.
\end{corollary}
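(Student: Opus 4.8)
The plan is to transport the statement, via \Cref{th:bvsws1}, to a density question in the Gagliardo--Slobodeckij space $W^{\alpha,1}(\Omega)$, where it is essentially classical. Indeed, \Cref{th:bvsws1} gives $bv^\alpha(\Omega) = W^{\alpha,1}(\Omega)$ with $\var_\alpha(f;\Omega) = [f]_{W^{\alpha,1}(\Omega)}$ for every $f \in L^1(\Omega)$, so the two spaces carry literally the same norm; it therefore suffices to show that $\Cinf(\overline{\Omega})$ is dense in $W^{\alpha,1}(\Omega)$.

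First I would fix $f \in W^{\alpha,1}(\Omega)$ and apply the extension theorem for Lipschitz (more generally, regular in the sense of \cite{Z15}) domains: there is a bounded linear extension operator producing $\tilde f \in W^{\alpha,1}(\R^n)$ with compact support and $\tilde f = f$ a.e.\ in $\Omega$, with $[\tilde f]_{W^{\alpha,1}(\R^n)} \aleq [f]_{W^{\alpha,1}(\Omega)}$ (here boundedness of $\Omega$ is used to get compact support). Next I would mollify: with $\eta_\eps$ the standard mollifier, set $\tilde f_\eps := \eta_\eps \ast \tilde f \in \Cc^\infty(\R^n)$. By the standard mollification estimates for fractional Sobolev seminorms on $\R^n$ --- e.g.\ \cite[Theorem 2.4]{Hitchhiker} or \cite[Lemma 26]{Mironescu05} --- one has $\tilde f_\eps \to \tilde f$ in $W^{\alpha,1}(\R^n)$, i.e.\ both $\|\tilde f_\eps - \tilde f\|_{L^1(\R^n)} \to 0$ and $[\tilde f_\eps - \tilde f]_{W^{\alpha,1}(\R^n)} \to 0$ as $\eps \to 0$.

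Finally, restriction to $\Omega$ is a bounded operator $W^{\alpha,1}(\R^n) \to W^{\alpha,1}(\Omega)$ (the $L^1$ part is monotone in the domain, and the Gagliardo double integral over $\Omega \times \Omega$ is dominated by the one over $\R^n \times \R^n$), so $\tilde f_\eps|_{\overline{\Omega}} \in \Cinf(\overline{\Omega})$ converges to $f$ in $W^{\alpha,1}(\Omega)$, hence in $bv^\alpha(\Omega)$ by \Cref{th:bvsws1}. This establishes the density.

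The one genuinely nontrivial input --- the rest being bookkeeping --- is the convergence of the Gagliardo seminorm under mollification, $[\tilde f_\eps - \tilde f]_{W^{\alpha,1}(\R^n)} \to 0$. For $\alpha = 1$ the analogous statement fails, since $W^{1,1} \neq BV$, which is exactly why one cannot expect this scheme to survive in the classical limit, and it is precisely where \Cref{th:bvsws1} together with the cited mollification lemmas carry the weight. A secondary point to keep in mind is that the extension theorem of \cite{Z15} requires $\partial\Omega$ to be sufficiently regular, which is ensured here by the Lipschitz hypothesis; without it the extension step --- and hence this argument --- can break down.
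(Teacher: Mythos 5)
Your proof is correct and follows exactly the route the paper indicates: reduce to $W^{\alpha,1}(\Omega)$ via \Cref{th:bvsws1}, extend using the Lipschitz extension theorem of \cite{Z15}, mollify using the standard fractional-Sobolev mollification estimates, and restrict back to $\Omega$. You have simply written out in full the steps the paper compresses into a one-sentence citation, so there is nothing to add.
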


Let us make a last remark about traces. For a classical $BV$ function there is a trace, \cite[Theorem 10.2.1]{Attouch2014}. 
However, this will not be true for $bv^\alpha(\Omega)$, 
since $W^{\alpha,1}(\Omega)$ does not have a reasonably defined trace. The typical approach is then the notion of a fat boundary trace, which we do not pursue in this paper.


\section{Image Denoising and Predual Problem}\label{s:imagedenoise}
Let $\Omega \subset \R^n$ be a open and   bounded set with a Lipschitz continuous boundary, $\alpha \in (0,1)$, $p \in (1,\infty)$, $p^\infty:= \frac{n}{n-\alpha}$, and $u_N\in L^p(\Omega)$. Based on the  two fractional variations considered in this work we consider the (primal) problems for some fixed positive parameters $\beta$ and $\gamma$
\begin{align}
 \qquad &\inf_{u\in L^p(\Omega)} \left\{ \frac{\gamma}{p}\|u-{u}_N\|_{L^p(\Omega)}^p + \beta \VR\left(\chi_\Omega u;\R^n \right) \right\},
 \tag{$\mathscr{P}_R$}\label{def:primalProblem1}\\
 \qquad &\inf_{u\in L^p(\Omega)} \left\{ \frac{\gamma}{p}\|u-{u}_N\|_{L^p(\Omega)}^p + \beta  \VG (u;\Omega) \right\}.
 \tag{$\mathscr{P}_G$}\label{def:primalProblem2}
\end{align}
Note that the condition of $u$ having bounded fractional variation is imposed  implicitly, and
it is also clear that both problems are strictly convex for $p>1$. Therefore, we use
 well-known results from {\it convex analysis}, cf. \cite{Ekeland99}, to study the  minimizers  of Problems (\ref{def:primalProblem1}) and (\ref{def:primalProblem2}).
The
regularity theory to a related problem to $\mathscr{P}_G$ was recently studied in \cite{NovagaOnue21}

\subsection*{Convex Analysis and Optimization}\label{subsec:convex}
As  usual in convex optimization,  we consider the so-called dual problem, which usually gives new insights about the structure of the primal  problem. In this work, we consider a different but related approach coined as predual method. Here  we mainly follow the approach given in \cite{Burger,Chavent1997,Hintermuller}. 
In order to introduce this method, we need some definitions, 
 cf.  \cite[Ch. I]{Ekeland99}. Consider a Banach space $V$  and its topological dual $V^*$, with duality paring denoted by $\displaystyle \langle\cdot,\cdot\rangle_{V^*,V}$. Given  $\mathscr{F}:V \rightarrow  \overline{\R}$, its {\it Fenchel conjugate} is given  by  $\mathscr{F}^*: V^* \rightarrow \overline{\R}$,
 \begin{align}
 u^*\mapsto \mathscr{F}^*(u^*):= \sup_{u\in V} \left\{ \langle u^*,u\rangle_{V^*,V}-\mathscr F(u) \right\}. \label{def:FenchelC}
 \end{align}
 We denote by {$\partial \mathscr{F}(u)$}  the subdifferential map of {$\mathscr{F}$} at the point $u\in V,$ see  \cite[Definition I.5.1]{Ekeland99}. The following characterization holds,
 %
 {
 \begin{align}
 \begin{aligned}
    u^*\in \partial \mathscr{F}(u)  \mbox{ if and only if } \mathscr{F}(u)  \mbox{ is finite and }\\
    \left\langle u^*, v-u\right\rangle_{V^*,V} + \mathscr{F}(u) \leq \mathscr{F}(v),\qquad \forall v\in V.
 \end{aligned}\label{def:subdiff}
 \end{align}}
 %
 We now introduce  a  process known as {\it dualization} \cite[Chs. III-IV]{Ekeland99}, here we 
 will focus on problems of the form: 
 \begin{align} \tag{$\mathscr{Q}$}\label{def:Q}
 \qquad  \inf_{u\in V} \{  F(u)+G(\Lambda u)\},
 \end{align}
 where 
 $Y$ is a Hausdorff topological space with dual $Y^*$, $\Lambda \in \mathscr{L}(V,Y)$, with transpose 
  $\Lambda^* \in \mathscr{L}(Y^*,V^*)$, and $F : V \rightarrow \overline{\R}$, $G : V \rightarrow \overline{\R}$. 
  We define  the dual problem of (\ref{def:Q}) as
 \begin{align} \tag{$\mathscr{Q}^*$}\label{def:Q*}
  \qquad \sup_{v\in Y^*} -\Phi^*(0,v),
 \end{align}
 where $\Phi^*:V^*\times Y^* \rightarrow \overline{\R}$ is the Fenchel conjugate (dual) of $\Phi:V\times Y \rightarrow\overline{\R}, (u,p)\mapsto {\Phi(u,p)} := F(u) + G(p+ \Lambda u)$, see \eqref{def:FenchelC}.
 The next theorem gives conditions for the so-called {\it Fenchel's duality}, cf. \cite[Theorem III.4.1]{Ekeland99} and \cite[Pg.~130]{IEkeland_TTurnbull_1983a}. 
 %
 \begin{theorem} \label{theorem:FenchelTheorem} Assume $V$ and $Y$ are  Banach spaces, $F$ and $G$ are convex and lower semicontinuous (l.s.c.), and there exists  $v_0\in V$ such that $F(v_0)<\infty,$ $G(\Lambda v_0)<\infty,$ and $G$ is continuous at $\Lambda v_0$. 
 Then, the problems  (\ref{def:Q}) and   (\ref{def:Q*}) are related by:
 \begin{align*}
 \inf_{u\in V} \{  F(u)+G(\Lambda u)\}  =& \sup_{v \in Y^*} -\Phi^*(0, v ) \\
                  =& \sup_{v \in Y^*} \left\{ - F^*(\Lambda^* v)- G^*(-v) \right\} ,
 \end{align*}
and there exists at least one solution to \eqref{def:Q*}.
 Moreover, if $\overline{u}$ and $\overline{v}$ are solutions for (\ref{def:Q}) and (\ref{def:Q*}), respectively, then
 \begin{align}
    \begin{aligned}
        \Lambda^*\overline{v} \in \partial F(\overline{u}),\\
        -\overline{v} \in \partial G(\Lambda \overline{u}).
    \end{aligned}\label{def:OptCond}
 \end{align}
 \end{theorem}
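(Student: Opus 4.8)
The statement to prove is Theorem~\ref{theorem:FenchelTheorem}, the Fenchel duality theorem. This is a classical result, so the plan is to reduce it to the standard reference rather than reprove it from scratch.

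\medskip

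\noindent\textbf{Plan.} The plan is to invoke the classical Fenchel--Rockafellar duality theorem as stated in \cite[Theorem III.4.1]{Ekeland99}, and then translate its conclusions into the explicit form claimed above. First I would recall the abstract statement: for a perturbation function $\Phi:V\times Y\to\overline\R$ that is convex, the value of the primal problem $\inf_{u}\Phi(u,0)$ equals the value of the dual $\sup_{v\in Y^*}-\Phi^*(0,v)$ provided there is a point where $\Phi(u,\cdot)$ behaves well — here this is exactly the Attouch--Br{\'e}zis-type constraint qualification furnished by the hypothesis that $F(v_0)<\infty$, $G(\Lambda v_0)<\infty$ and $G$ is continuous at $\Lambda v_0$. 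With $\Phi(u,p):=F(u)+G(p+\Lambda u)$ one checks directly that $\Phi(u,0)=F(u)+G(\Lambda u)$, so $\inf_u\Phi(u,0)$ is indeed \eqref{def:Q}.

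\medskip

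\noindent\textbf{Computing $\Phi^*(0,v)$.} The second step is the computation that turns $-\Phi^*(0,v)$ into $-F^*(\Lambda^*v)-G^*(-v)$. By definition of the Fenchel conjugate,
\[
\Phi^*(0,v)=\sup_{u\in V,\,p\in Y}\Big\{\langle v,p\rangle_{Y^*,Y}-F(u)-G(p+\Lambda u)\Big\}.
\]
Substituting $q:=p+\Lambda u$ (a bijective change of variable for each fixed $u$, since $\Lambda$ is linear) gives
\[
\Phi^*(0,v)=\sup_{u\in V,\,q\in Y}\Big\{\langle v,q-\Lambda u\rangle_{Y^*,Y}-F(u)-G(q)\Big\}
=\sup_{u}\Big\{-\langle\Lambda^*v,u\rangle_{V^*,V}-F(u)\Big\}+\sup_{q}\Big\{\langle v,q\rangle_{Y^*,Y}-G(q)\Big\},
\]
where I used $\langle v,\Lambda u\rangle_{Y^*,Y}=\langle\Lambda^*v,u\rangle_{V^*,V}$. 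The first supremum is $F^*(-\Lambda^*v)$ and the second is $G^*(v)$; replacing $v$ by $-v$ in the dual problem (which is a harmless reindexing of the supremum over $Y^*$) yields the asserted form $\sup_v\{-F^*(\Lambda^*v)-G^*(-v)\}$. The existence of a dual maximizer is part of the conclusion of \cite[Theorem III.4.1]{Ekeland99} under the stated constraint qualification.

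\medskip

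\noindent\textbf{Extremality relations.} Finally, for the optimality conditions \eqref{def:OptCond}: when $\overline u$ solves \eqref{def:Q} and $\overline v$ solves \eqref{def:Q*}, zero duality gap means $F(\overline u)+G(\Lambda\overline u)=-F^*(\Lambda^*\overline v)-G^*(-\overline v)$, i.e.
\[
\big(F(\overline u)+F^*(\Lambda^*\overline v)-\langle\Lambda^*\overline v,\overline u\rangle\big)+\big(G(\Lambda\overline u)+G^*(-\overline v)-\langle-\overline v,\Lambda\overline u\rangle\big)=0.
\]
By the Fenchel--Young inequality each bracket is nonnegative, hence each vanishes; the Fenchel equality case is precisely $\Lambda^*\overline v\in\partial F(\overline u)$ and $-\overline v\in\partial G(\Lambda\overline u)$ (using the characterization \eqref{def:subdiff}). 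This gives \eqref{def:OptCond} and completes the proof.

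\medskip

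\noindent\textbf{Main obstacle.} The only genuinely nontrivial point is the constraint qualification: verifying that the hypotheses on $v_0$ and the continuity of $G$ at $\Lambda v_0$ are exactly what \cite[Theorem III.4.1]{Ekeland99} (equivalently the Attouch--Br{\'e}zis condition, cf.\ \cite[Pg.~130]{IEkeland_TTurnbull_1983a}) requires to guarantee both the absence of a duality gap and the attainment of the supremum in \eqref{def:Q*}. Everything else is the bookkeeping of conjugates and the Fenchel--Young equality case carried out above.
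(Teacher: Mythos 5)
Your proposal is correct and takes essentially the same route as the paper, which states this result without proof as a direct citation of \cite[Theorem III.4.1]{Ekeland99} (and \cite[Pg.~130]{IEkeland_TTurnbull_1983a}); the constraint qualification in the hypotheses is precisely the one required there. Your added bookkeeping — the computation $\Phi^*(0,v)=F^*(-\Lambda^*v)+G^*(v)$ followed by the reindexing $v\mapsto -v$, and the derivation of \eqref{def:OptCond} from the zero duality gap via the Fenchel--Young equality case — is accurate and fills in details the paper leaves to the reference.
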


 In general, there are different choices for $F,G$ and $\Lambda$ in order to write a given  problem  as 
 in (\ref{def:Q}). Here  we consider one that satisfy the hypothesis of Theorem \ref{theorem:FenchelTheorem} 
 in a straightforward manner.
We now show existence and characterization for minimizers of problems (\ref{def:primalProblem1}) and 
(\ref{def:primalProblem2}).
 \subsection*{Riesz-type}
By Proposition \ref{Prop:embeddingLp}, for $\displaystyle p\in \left[1,p^\infty \right]$, with $p^\infty:= \frac{n}{n-\alpha}$, 
we can consider the problem (\ref{def:primalProblem1}) def{}ined on  $L^p(\Omega)$ or $\BVRz$, cf. (\ref{def:BV00}), 
interchangeably. 
The next lemma shows that the problem  (\ref{def:primalProblem1}), related to the Riesz-type of fractional 
bounded variation, has a solution and for $p>1$ it is unique. 
%
\begin{lemma}\label{lemma:exisUniq-I}
    For $\displaystyle p \in \left(1,p^\infty\right)$, the problem   (\ref{def:primalProblem1})  has a unique  solution $\overline{u}\in \BVRz$ 
\end{lemma}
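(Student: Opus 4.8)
The plan is to use the direct method of the calculus of variations together with the strict convexity of the functional. Write
\[
\mathcal{I}_R(u) := \frac{\gamma}{p}\|u-u_N\|_{L^p(\Omega)}^p + \beta\,\VR(\chi_\Omega u;\R^n)
\]
on $L^p(\Omega)$, with the convention that $\VR(\chi_\Omega u;\R^n) = +\infty$ when $u \notin \BVRz$. First I would observe that $\mathcal{I}_R$ is proper: the constant function $0$ (or $u_N$ itself if it lies in $\BVRz$, but $0$ is simpler) has finite energy since $\VR(0;\R^n)=0$, so $\mathcal{I}_R(0) = \frac{\gamma}{p}\|u_N\|_{L^p(\Omega)}^p < \infty$. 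Hence $m := \inf_{L^p(\Omega)} \mathcal{I}_R < \infty$, and clearly $m \geq 0$.

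Next I would take a minimizing sequence $\{u_k\} \subset L^p(\Omega)$ with $\mathcal{I}_R(u_k) \to m$. From $\sup_k \mathcal{I}_R(u_k) < \infty$ we get both $\sup_k \|u_k - u_N\|_{L^p(\Omega)} < \infty$ — hence $\sup_k \|u_k\|_{L^p(\Omega)} < \infty$ and, since $\Omega$ is bounded, $\sup_k \|u_k\|_{L^1(\Omega)} < \infty$ — and $\sup_k \VR(\chi_\Omega u_k;\R^n) < \infty$. Thus $\sup_k \|u_k\|_{BV^\alpha(\Omega)} < \infty$, so the weak compactness result \Cref{weak_comp} applies: there is $\overline{u} \in \BVRz$ and a subsequence (not relabeled) with $u_k \to \overline{u}$ in $L^q(\R^n)$ for every $q \in [1,\frac{n}{n-\alpha})$, and in particular in $L^1(\Omega)$; moreover, up to a further subsequence, $u_k \to \overline{u}$ a.e. Since $p \in (1,p^\infty) = (1,\frac{n}{n-\alpha})$, we actually have $u_k \to \overline{u}$ in $L^p(\Omega)$, so $\|u_k - u_N\|_{L^p(\Omega)}^p \to \|\overline{u}-u_N\|_{L^p(\Omega)}^p$. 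For the variation term, \Cref{weak_comp} (or \Cref{lsc}) gives the lower semicontinuity $\VR(\overline{u};\R^n) \leq \liminf_k \VR(u_k;\R^n)$. Combining, $\mathcal{I}_R(\overline{u}) \leq \liminf_k \mathcal{I}_R(u_k) = m$, so $\overline{u}$ is a minimizer.

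For uniqueness when $p > 1$: both terms of $\mathcal{I}_R$ are convex on $L^p(\Omega)$ — the variation term is convex because it is a supremum of linear functionals (from definition \eqref{Def:Valpha}, and it passes to $\chi_\Omega u$ linearly), and $u \mapsto \|u-u_N\|_{L^p(\Omega)}^p$ is strictly convex for $p>1$ by the strict convexity of $t \mapsto |t|^p$ together with Clarkson-type/uniform convexity of $L^p$. If $\overline{u}_1, \overline{u}_2$ were two distinct minimizers, then $\tfrac12(\overline{u}_1+\overline{u}_2)$ would have strictly smaller energy, a contradiction; hence the minimizer is unique.

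I do not expect any serious obstacle here — everything is assembled from results already proved in \Cref{s:fracbvriesz}. The one point requiring a little care is the passage from $L^1$-convergence (and a.e. convergence) of the minimizing sequence to genuine $L^p(\Omega)$-convergence of the $L^p$-data term: this is where the strict inequality $p < p^\infty$ matters, since \Cref{weak_comp} only gives strong convergence in $L^q$ for $q$ strictly below the critical exponent $\frac{n}{n-\alpha}$. This is exactly why the statement restricts to the open interval $\left(1,p^\infty\right)$ rather than including the endpoint.
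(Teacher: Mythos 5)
Your proposal is correct and follows essentially the same route as the paper: the direct method via the a priori bounds on a minimizing sequence, compactness and lower semicontinuity from \Cref{weak_comp}, strong $L^p$-convergence of the data term using $p<p^\infty$, and uniqueness from strict convexity of the $L^p$-fidelity term for $p>1$. No gaps.
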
 
%
\begin{proof} Let $p\in [1,\infty)$,  def{}ine $\mathscr{J}_R: \left( L^p(\Omega), \|\cdot \|_{L^p(\Omega)}\right) \rightarrow \overline{\R},$ given by 
\begin{align}
   \mathscr{J}_R(u):=\frac{\gamma}{p}\| u-u_N\|^p_{L^p(\Omega)} +\beta \VR(\chi_\Omega u;\R^n). \label{def:F}
\end{align} 
It is clear that 
$$
0\leq \inf_{u\in L^p(\Omega)}  \mathscr{J}_R(u) \leq  \frac{\gamma}{p}\| u_N\|^p_{L^p(\Omega)}. 
$$
Now, let $(u_k)_{k\in \N}\subseteq L^p(\Omega)$ be a minimizing sequence associated to the problem (\ref{def:primalProblem1}), 
then for each $k\in \mathbb{N}$ 
\begin{align*}
 &\|u_k\|_{L^p(\Omega)} \leq \|u_k-u_N\|_{L^p(\Omega)} +\|u_N\|_{L^p(\Omega)} \leq 2\|u_N\|_{L^p(\Omega)}, \mbox{ and }\\
 &\VR(\chi_\Omega u_k;\R^n)\leq \frac{\gamma}{ p\beta} \|u_N\|_{L^p(\Omega)}.
\end{align*}
Then, for   $ p \in \left[ 1, p^\infty \right)$, Propositions \ref{Prop:embeddingLp}  and \ref{weak_comp} imply  there exist $\overline{u}\in \BVRz \hookrightarrow L^p(\R^n)$  and   a subsequence $\{u_{k_i}\}_{i \in \N}$ such that  
\begin{align*}
 \VR(\overline{u};\R^n) \leq \liminf_{i \to \infty} \VR(u_{k_i};\R^n) \quad \mbox{and} \quad 
  \|u_{k_i} - u_N\|^p_{L^p(\Omega)}  \xrightarrow{i \to \infty} \|\overline{u} - u_N\|^p_{L^p(\Omega)}. 
\end{align*}
Thus, the existence of a solution for (\ref{def:primalProblem1}) follows from the fact that $\overline{u} = \chi_\Omega \overline{u}$, a.e., for the uniqueness it is enough to notice  that $\displaystyle \mathscr{J}_R$, cf. (\ref{def:F}), is a strictly convex functional for $p>1$. In fact, if  $\overline{u}_1$ and $\overline{u}_2$ were  two different solutions  to (\ref{def:primalProblem1}), then for  $\lambda\in (0,1)$, 
\begin{align*}
    \mathscr{J}_R(\lambda \overline{u}_1 +(1-\lambda)\overline{u}_2) =& \frac{\gamma}{p}\|\lambda \overline{u}_1 +(1-\lambda)\overline{u}_2-u_d\|^p_{L^p(\Omega)}
    +\beta \VR(\chi_\Omega (\lambda \overline{u}_1 +(1-\lambda)\overline{u}_2);\R^n)\\
    =& \frac{\gamma}{p}\|\lambda (\overline{u}_1-u_d) +(1-\lambda)(\overline{u}_2-u_d)\|^p_{L^p(\Omega)}  \\ 
    &{}+\beta \VR(\chi_\Omega (\lambda \overline{u}_1 +(1-\lambda)\overline{u}_2);\R^n)\\
    <& \frac{\gamma}{p} \lambda\| \overline{u}_1-u_d\|^p_{L^p(\Omega)} + \frac{\gamma}{p}(1-\lambda)\|\overline{u}_2-u_d\|^p_{L^p(\Omega)}\\
     &{}+\beta \VR(\chi_\Omega (\lambda \overline{u}_1 +(1-\lambda)\overline{u}_2);\R^n)\\
    \leq &  \lambda\mathscr{J}_R( \overline{u}_1 ) + (1-\lambda)\mathscr{J}_R(\overline{u}_2).
\end{align*}
Thus, $\overline{u}_1=\overline{u}_2$ a.e. and by the definition of $\VR$\,, cf. (\ref{Def:Valpha}),  the proof concludes. 
\end{proof}

Next, we will derive an expression of the predual of (\ref{def:primalProblem1}).
In order to do that, we start with the regularity for the ``test functions'' in 
(\ref{Def:Valpha}). It is clear that if $u\in L^1(\R^n)$ and  $\supp  u \subset \Omega$, then   $\int_{{\R^n}} u\, \divR  \phiR\, d x$ does not depend on $\divR \phiR\arrowvert_{\Omega^c}$. This motivates us to define
$\displaystyle  \|\phiR\|_{X_{\text{Riesz}}} := \sqrt[q]{ \|\phiR\|_{L^q(\R^n,\R^n)}^{q} + \|\divR \phiR\|_{L^q(\Omega)}^{q} }$
for $\phiR\in \Cc^1(\R^n;\R^n)$, where  $q:= \frac{p}{p-1}$. 
We consider the space $X_{\text{Riesz}} := X_{\text{Riesz}}(\Omega,q,\alpha),$ given by
  \begin{align*}
     X_{\text{Riesz}} &:= \overline{\Cc^1(\R^n;\R^n)}^{\|\cdot\|_{X_{\text{Riesz}}}} .
 \end{align*}
{We also define an auxiliary problem}   
\begin{align}
\qquad \inf_{\phiR \in  X_{\text{Riesz}}}
\left\{ \frac{1}{q} \|-\divR \phiR\|^{q}_{L^q(\Omega)}- \int_{\Omega} u_N (-\divR \phiR)
 + I_{\beta}(\phiR) \right\}, \tag{$\mathscr Q_R$} \label{def:predual}
\end{align}
where {$I_\beta$ denotes the convex indicator function defined as}
\begin{align*}
    I_{\beta}(\phiR):=
    \begin{cases}
        0 &: \| \phiR\|_{L^\infty(\R^n)} \leq \beta,\\
        +\infty &: \mbox{otherwise.}
    \end{cases}
\end{align*}
{We will establish that \eqref{def:predual} is the pre-dual problem to \eqref{def:primalProblem1},
i.e., dual of \eqref{def:predual} will be \eqref{def:primalProblem1} if $\Omega$ is convex.}

{We begin by noticing that} (\ref{def:predual}) {fits in the abstract framework of} (\ref{def:Q}) 
if we consider the spaces:  $Y:=\left(L^{q}(\Omega), \|\cdot \|_{L^q(\Omega)}\right)$, 
${V:=\left(X_{\text{Riesz}},  \|\cdot \|_{X_{\text{Riesz}}} \right),}$ and the operators:  
\begin{align}
\begin{aligned}
 &G:Y  \rightarrow \overline{\R}, \quad 
 G(v):= \frac{1}{q} \|v\|_{L^q(\Omega)}^{q} - \int_{\Omega} u_N vdx,\\[0.2cm]
 &F:V  \rightarrow \overline{\R},\quad
 F(\phiR):= I_{\beta}(\phiR),\\[0.2cm]
 &\Lambda : V \rightarrow  Y,\quad
 \Lambda(\phiR) := \left(-\divR \phiR\right)\!|_\Omega.
 \end{aligned} \label{def:FandG}
\end{align}
To compute the dual  problem of (\ref{def:predual}), we compute the Fenchel conjugate of $F,G$ and $\Lambda$, given in (\ref{def:FandG}). 
%

\begin{proposition}\label{lemma:F*andG*} 
Let $\Omega\subseteq\mathbb{R}^n$ be open, bounded, convex. Let 
$Y:=\left(L^{q}(\Omega), \|\cdot \|_{q,\Omega}\right),$ 
${V:=\left(X_{\text{Riesz}},  \|\cdot \|_{X_{\text{Riesz}}} \right)}$, and  
operators $F, G$ and $\Lambda$ be defined as in (\ref{def:FandG}), 
then
\begin{align*}
    G^*&: Y^* \rightarrow \overline{\R},\quad  u \mapsto \frac{1}{p} \|u+u_N\|_{L^p(\Omega)}^p,\\
     F^*&: V^*  \rightarrow \overline{\R},\quad \Psi^*\mapsto  \sup_{\substack{\phiR\in V\\ \|\phiR\|_{L^\infty(\R^n)} \leq \beta}} \langle  \Psi^*,\phiR \rangle_{V^*,V},\\
     F^*\!\circ \Lambda^*&: Y^*  \rightarrow \overline{\R},\quad u  \mapsto \beta \VR(\chi_\Omega u;\R^n).
\end{align*}
\end{proposition}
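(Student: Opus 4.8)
The plan is to compute the three Fenchel conjugates essentially by definition, exploiting the simple structure of $G$, $F$ and $\Lambda$. The computation of $G^*$ is a standard exercise in convex analysis: $G$ is (up to the linear perturbation by $u_N$) a multiple of the $q$-th power of the $L^q$-norm, and the Fenchel conjugate of $v \mapsto \frac{1}{q}\|v\|_{L^q}^q$ on $L^q(\Omega)$ is $v^* \mapsto \frac{1}{p}\|v^*\|_{L^p}^p$ by the Young/H\"older duality together with the identification $(L^q(\Omega))^* = L^p(\Omega)$ (here $\frac1p+\frac1q=1$). The linear term shifts the argument: $G^*(u) = \sup_{v}\{\langle u, v\rangle - \frac1q\|v\|_{L^q}^q + \langle u_N, v\rangle\} = (\tfrac1q\|\cdot\|^q_{L^q})^*(u+u_N) = \frac1p\|u+u_N\|_{L^p(\Omega)}^p$. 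I would write this out in one or two lines.

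\textbf{Conjugate of $F$.} Since $F = I_\beta$ is the indicator of the convex set $\{\phiR \in V: \|\phiR\|_{L^\infty(\R^n)} \leq \beta\}$, its Fenchel conjugate is by definition the support function of that set, $F^*(\Psi^*) = \sup\{\langle \Psi^*, \phiR\rangle_{V^*,V} : \phiR \in V, \|\phiR\|_{L^\infty} \leq \beta\}$. This is immediate from \eqref{def:FenchelC}; nothing more is needed.

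\textbf{Conjugate of $F^*\circ\Lambda^*$.} This is the substantive step. By the definition of the transpose $\Lambda^*$ and of $F^*$, for $u \in Y^* = L^p(\Omega)$ we have
\[
 F^*(\Lambda^* u) = \sup_{\substack{\phiR \in V \\ \|\phiR\|_{L^\infty(\R^n)}\leq \beta}} \langle \Lambda^* u, \phiR\rangle_{V^*,V} = \sup_{\substack{\phiR \in V \\ \|\phiR\|_{L^\infty(\R^n)}\leq \beta}} \int_\Omega u\,(-\divR \phiR)\,dx.
\]
It then remains to identify the right-hand side with $\beta\,\VR(\chi_\Omega u;\R^n)$. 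By $\Cc^1$-density in $V$ with respect to $\|\cdot\|_{X_{\text{Riesz}}}$ (and since $L^\infty$-bounded approximations can be arranged so the constraint passes to the limit, using a truncation/mollification argument, which is exactly where convexity of $\Omega$ is needed -- this is precisely the content flagged around Propositions~\ref{pr:IbetaeqItildebetaRiesz} and \ref{pr:IbetaeqItildebetaGag}), the supremum over $\phiR \in V$ equals the supremum over $\phiR \in \Cc^1(\R^n;\R^n)$ with $\|\phiR\|_{L^\infty}\leq\beta$. Rescaling $\phiR = \beta\Phi$ with $\|\Phi\|_{L^\infty}\leq 1$ and using that $\int_{\R^n}\chi_\Omega u\,(-\divR\Phi)\,dx$ is well-defined (Lemma~\ref{la:divalphaphi}, and the fact that it does not see $\divR\Phi$ outside $\Omega$), this is exactly $\beta\sup\{\int_{\R^n}\chi_\Omega u\,\divR\Phi: \Phi\in\Cc^1(\R^n;\R^n),\|\Phi\|_{L^\infty}\leq1\}$, with a sign that works out since the supremum set is symmetric under $\Phi\mapsto-\Phi$. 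By definition \eqref{Def:Valpha} this equals $\beta\,\VR(\chi_\Omega u;\R^n)$.

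\textbf{Main obstacle.} The delicate point is the density reduction: replacing the supremum over the abstract completion $X_{\text{Riesz}}$ by a supremum over genuine $\Cc^1$ test fields \emph{while keeping the $L^\infty$ constraint}. Elements of $X_{\text{Riesz}}$ are only limits in the $X_{\text{Riesz}}$-norm, which controls $L^q$ and $\divR(\cdot)|_\Omega$ in $L^q$ but not $L^\infty$; one must show that an $X_{\text{Riesz}}$-element with $\|\phiR\|_{L^\infty}\leq\beta$ can be approximated in $X_{\text{Riesz}}$-norm by $\Cc^1$ fields whose $L^\infty$-norm is $\leq\beta$ (or $\leq\beta+o(1)$). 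This is the density-of-smooth-functions statement singled out in the introduction as requiring convexity of $\Omega$, so in the actual write-up I would simply cite Proposition~\ref{pr:IbetaeqItildebetaRiesz} for this equality and keep the rest of the proof to the short direct computations above.
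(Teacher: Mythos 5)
Your proposal is correct and follows essentially the same route as the paper: $G^*$ by the standard Young/H\"older duality with the shift by $u_N$, $F^*$ as the support function of the constraint set, and the identification of $F^*\circ\Lambda^*$ with $\beta\,\VR(\chi_\Omega\,\cdot\,;\R^n)$ via the density-with-$L^\infty$-constraint reduction, which is exactly where the paper invokes \Cref{pr:IbetaeqItildebetaRiesz} (phrased there as replacing $I_\beta$ by $\tilde I_\beta$ inside the supremum defining $F^*$). You correctly isolate this as the only nontrivial step and the only place convexity of $\Omega$ enters.
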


\Cref{lemma:F*andG*}, while in principle looking very similar to the arguments in \cite[Section 2]{Hintermuller}, contains a quite serious subtlety. Observe that \cite{Hintermuller} does not consider test-functions with the natural restriction $\|\Phi\|_{{L^\infty(\R^n)}} \leq \beta$, but resorts to discussing component-wise control $|\Phi^i| \leq \beta$, $i=1,\ldots,n$ (leading to a nonstandard $BV$-space) -- which is crucially needed in their argument to compute the predual. 

Instead we show in our paper that for bounded, open, convex sets $\Omega$ we do not need such unnatural restrictions. The main property we use is the controllable distance of rescaled sets, cf. \Cref{la:starshapeddist} and \Cref{la:starshapeddist2}. The main novelty is contained in the next proposition. Notice that such a result is even critical to prove the result in \cite{Hintermuller,MR3710331}, where $\alpha = 1$, for the natural $BV$-space.

\begin{proposition}\label{pr:IbetaeqItildebetaRiesz}
If $\Omega$ is convex, then for all $\Phi \in X_{\text{Riesz}}$,
\[
 I_{\beta}(\Phi) = \tilde{I}_{\beta}(\Phi),
\]
where
\[
    \tilde{I}_{\beta}(\phiR):=
    \begin{cases}
        0 &: \text{if there exists $\Psi_k \in C_c^\infty(\R^n)$, $\Psi_k\to \Phi$ in $X_{\text{Riesz}}$ }\\
        & \text{such that $\sup_{k}\| \Psi_k\|_{L^\infty(\R^n)} \leq \beta$},\\
        +\infty &: \mbox{otherwise.}
    \end{cases}
\]
\end{proposition}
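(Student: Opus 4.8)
The statement claims that for convex $\Omega$, the ``hard'' indicator $I_\beta$ (which only requires the $L^\infty$-bound $\|\Phi\|_{L^\infty(\R^n)}\le\beta$ pointwise) coincides on $X_{\text{Riesz}}$ with the ``soft'' indicator $\tilde I_\beta$ (which requires approximability by $C_c^\infty$ functions that \emph{uniformly} respect the bound $\beta$). The inclusion $\tilde I_\beta(\Phi)=0 \implies I_\beta(\Phi)=0$ is the easy direction: if $\Psi_k\to\Phi$ in $X_{\text{Riesz}}$ with $\sup_k\|\Psi_k\|_{L^\infty}\le\beta$, then $\Psi_k\to\Phi$ in $L^q(\R^n,\R^n)$, so a subsequence converges a.e., and the pointwise bound passes to the limit, giving $\|\Phi\|_{L^\infty(\R^n)}\le\beta$. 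So the content is the reverse implication: given $\Phi\in X_{\text{Riesz}}$ with $\|\Phi\|_{L^\infty(\R^n)}\le\beta$, construct the approximating sequence $\Psi_k\in C_c^\infty$ with the uniform $L^\infty$-bound preserved.

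\textbf{The plan} is as follows. By definition of $X_{\text{Riesz}}$ as the closure of $C_c^1(\R^n;\R^n)$, there already exist $\Phi_j\in C_c^1$ with $\Phi_j\to\Phi$ in $X_{\text{Riesz}}$ — but the crucial obstruction is that nothing guarantees $\|\Phi_j\|_{L^\infty}\le\beta$. The standard fix would be truncation: replace $\Phi_j$ by $\Phi_j\min\{1,\beta/|\Phi_j|\}$; this preserves the $L^\infty$-bound and the $L^q$-convergence, but truncation is only Lipschitz, not $C^1$, and — more seriously — it does not obviously preserve convergence of $\operatorname{Div}_\alpha$ in $L^q(\Omega)$, since $\operatorname{Div}_\alpha$ is a nonlocal operator and truncation introduces derivative jumps. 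The approach I would take instead is a \emph{rescale-then-mollify} argument that exploits convexity of $\Omega$. Fix a point $x_0$ in $\Omega$ (say the origin after translation) so that $\Omega$ is star-shaped about it, and for $\lambda>1$ set $\Omega_\lambda := \lambda\Omega \supset\supset\Omega$. Define the rescaled field $\Phi^\lambda(x):=\Phi(x/\lambda)$. Then $\|\Phi^\lambda\|_{L^\infty(\R^n)}=\|\Phi\|_{L^\infty(\R^n)}\le\beta$ exactly, with no loss; and one checks, using the Fourier/integral representation \eqref{eq:integralrep} of $\operatorname{Div}_\alpha$ and its homogeneity under dilation, that $\operatorname{Div}_\alpha\Phi^\lambda(x)=\lambda^{-\alpha}(\operatorname{Div}_\alpha\Phi)(x/\lambda)$, so $\Phi^\lambda\to\Phi$ in $X_{\text{Riesz}}$ as $\lambda\to 1^+$ by continuity of dilation on $L^q$. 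Since $\Phi^\lambda$ (as an element of $X_{\text{Riesz}}$) is supported — in the relevant sense for the $\operatorname{Div}_\alpha\!\restriction_\Omega$ norm — in a set compactly containing $\overline\Omega$, we can now mollify: $\Psi_{\lambda,\eps}:=\eta_\eps * \Phi^\lambda$ is in $C^\infty$, has $\|\Psi_{\lambda,\eps}\|_{L^\infty(\R^n)}\le\|\Phi^\lambda\|_{L^\infty(\R^n)}\le\beta$ (convolution with a probability kernel is an $L^\infty$-contraction), converges to $\Phi^\lambda$ in $L^q$ and — since $\operatorname{Div}_\alpha$ commutes with convolution — $\operatorname{Div}_\alpha\Psi_{\lambda,\eps}=\eta_\eps*\operatorname{Div}_\alpha\Phi^\lambda\to\operatorname{Div}_\alpha\Phi^\lambda$ in $L^q$. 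A diagonal argument in $(\lambda,\eps)$ then produces the required sequence $\Psi_k\in C_c^\infty(\R^n;\R^n)$ (compact support coming from truncating $\Phi^\lambda$ far away from $\Omega$, which is harmless for the $X_{\text{Riesz}}$-norm once we are outside a neighborhood of $\overline\Omega$) with $\Psi_k\to\Phi$ in $X_{\text{Riesz}}$ and $\sup_k\|\Psi_k\|_{L^\infty(\R^n)}\le\beta$, i.e. $\tilde I_\beta(\Phi)=0$.

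\textbf{The main obstacle} I anticipate is the step where $\Phi\in X_{\text{Riesz}}$ is only an abstract limit of $C_c^1$ fields, not itself a function to which one can literally apply a dilation — one must make sense of $\Phi^\lambda$ and of ``$\Phi$ vanishes far from $\Omega$'' at the level of the Banach space $X_{\text{Riesz}}$, and verify that the dilation $\Phi\mapsto\Phi^\lambda$ is a bounded operator on $X_{\text{Riesz}}$ with $\Phi^\lambda\to\Phi$ as $\lambda\to1$. This is exactly where the geometry enters: for non-convex (or merely star-shaped-about-no-point) $\Omega$, the rescaled domain $\lambda\Omega$ need not contain $\overline\Omega$, and the argument collapses. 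The quantitative control — that $\operatorname{dist}(\overline\Omega,\partial\Omega_\lambda)\to 0$ in a controlled way, and that $\Omega_\lambda\supset\supset\Omega$ for $\lambda>1$ — is precisely what Lemmas \ref{la:starshapeddist} and \ref{la:starshapeddist2} (referenced just before the proposition) are designed to supply, so I would invoke those to handle the geometric bookkeeping and to justify that the mollification parameter $\eps$ can be chosen small relative to $\lambda-1$ uniformly. Once the dilation operator is shown to be well-defined and continuous on $X_{\text{Riesz}}$, the rest is the routine mollification/diagonalization sketched above; the homogeneity computation $\operatorname{Div}_\alpha(\Phi(\cdot/\lambda))=\lambda^{-\alpha}(\operatorname{Div}_\alpha\Phi)(\cdot/\lambda)$ follows immediately from the Fourier definition $\operatorname{Div}_\alpha = \operatorname{div}\circ I^{1-\alpha}$ and the scaling of the Riesz potential.
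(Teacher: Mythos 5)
Your outline is essentially the paper's proof: the core of the argument is exactly the outward dilation $\Phi(\cdot/\rho)$, $\rho>1$, which preserves the $L^\infty$ bound and pushes $\Div_\alpha$-control from $\Omega$ to the strictly larger set $\rho\Omega\supset\supset\Omega$ (this is where convexity and Lemmas~\ref{la:starshapeddist}, \ref{la:starshapeddist2} enter), followed by mollification at scale $\delta\ll\dist(\Omega,\partial(\rho\Omega))$, with the scaling identity $\Div_\alpha(\Phi(\cdot/\rho))=\rho^{-\alpha}(\Div_\alpha\Phi)(\cdot/\rho)$ and \Cref{la:Lpconv} doing the work. The easy direction is also handled the same way.

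There is, however, one step you dismiss that is a genuine gap as written: obtaining compact support. You say this comes from ``truncating $\Phi^\lambda$ far away from $\Omega$, which is harmless for the $X_{\text{Riesz}}$-norm once we are outside a neighborhood of $\overline\Omega$.'' This is not harmless, for precisely the reason you yourself invoke two sentences earlier to reject amplitude truncation: $\Div_\alpha$ is nonlocal, so multiplying by a cutoff $\zeta_m$ with $\zeta_m\equiv 1$ near $\Omega$ still changes $\Div_\alpha$ \emph{on} $\Omega$ by a nonlocal tail term coming from the far-away values of $\Phi$. Moreover, for $\Phi\in X_{\text{Riesz}}$ the quantity $\Div_\alpha\Phi$ is only defined weakly (as an $L^q(\Omega)$ limit along an approximating sequence), so one cannot simply plug $\zeta_m\Phi$ into the integral representation; one has to argue through test functions. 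The paper's Step 1 devotes real effort to this: it shows $\Div_\alpha(\zeta_m\Phi)-\Div_\alpha\Phi\to 0$ in $L^q(\Omega)$ by writing the difference as a commutator $\zeta_m D^\alpha\varphi-D^\alpha(\zeta_m\varphi)$ against test functions, estimating it via a Coifman--McIntosh--Meyer commutator bound of order $[\zeta_m]_{\rm Lip}\lesssim 1/m$, and concluding by duality. Alternatively one could estimate the tail integral $\int_{|y|\ge m}|x-y|^{-(n+\alpha)}|\Phi(y)|\,dy$ directly by H\"older, but some quantitative argument is required, and its absence is the one real hole in your proposal. Note also that this truncation cannot be postponed indefinitely: whichever order you choose (the paper truncates first, then dilates and mollifies; you dilate and mollify first), the field you mollify is not compactly supported unless you have already cut it off, so the step must actually be carried out.
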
 
\begin{proof}
We first observe that
\begin{equation}\label{eq:prop:1}
 \tilde{I}_{\beta}(\Phi) = 0 \Rightarrow I_{\beta}(\Phi) = 0.
\end{equation}
Indeed, if $\tilde{I}_{\beta}(\Phi) = 0$ then there exists a sequence $\Psi_k \in C_c^\infty(\R^n)$ with 
$\|\Psi_k\|_{L^\infty(\R^n)} \leq \beta$, 
such that $\Psi_k \to \Phi$ in $X_{\text{Riesz}}$. In particular, we have $\|\Psi_k - \Phi\|_{L^q(\R^n)} \xrightarrow{k \to \infty} 0$. 
Then there exists a  subsequence, 
still denoted by $\Psi_k$, such that $\Psi_k$ converges a.e. to $\Phi$,
which implies  that $|\Phi(x)| \leq \beta$  a.e. in $\mathbb{R}^n$, i.e. $\|\Phi\|_{L^\infty(\R^n)} \leq \beta$. 
By the definition of $I_{\beta}$, we have $I_{\beta}(\Phi) = 0$, which proves \eqref{eq:prop:1}.

From \eqref{eq:prop:1} we conclude
$
  \tilde{I}_{\beta}(\Phi) \geq I_{\beta}(\Phi).
$
It remains to prove
$
  \tilde{I}_{\beta}(\Phi) \leq I_{\beta}(\Phi).
$
If the right-hand side is $+\infty$ then there is nothing to show.
Thus, we only need to show 
\begin{equation}\label{eq:prop:goal}
 I_{\beta}(\Phi) = 0 \quad \Rightarrow \quad \tilde{I}_{\beta}(\Phi) = 0 .
\end{equation}
Suppose that $\Phi \in X_{\text{Riesz}}$ and $I_{\beta}(\Phi) = 0$. In order to establish \eqref{eq:prop:goal}, we need to show
\begin{equation}\label{eq:prop:goal2}
 \forall \eps > 0\ \exists \Theta \in C_c^\infty(\R^n;\R^n), \quad \|\Theta\|_{L^\infty(\R^n)} \leq \beta, \quad \|\Theta - \Phi\|_{X_{\text{Riesz}}} < \eps.
\end{equation}
We proceed in several steps.

\underline{Step 1:}
We first show that
\begin{equation}\label{eq:prop:goalstep1}
 \forall \eps > 0\ \exists \Theta_1 \in X_{\text{Riesz}},\ \supp \Theta_1 \subset \subset \R^n, \quad \|\Theta_1\|_{L^\infty(\R^n)} \leq \beta, \quad \|\Theta_1 - \Phi\|_{X_{\text{Riesz}}} < \eps.
\end{equation}
For $m\in\mathbb{N}$, we choose a smooth cut-off function $0\leq \zeta_m\leq 1$,
 such that  $\zeta_m =1$ when $\abs{x}< m$; $\zeta_m =0$ when $\abs{x}> 2m$; and
$\abs{\nabla \zeta_m }\aleq \frac{1}{m}$.
For sufficiently large $m$, we set
\[
\Theta_1 := \zeta_m \Phi . 
\]
It is clear that $\|\Theta_1\|_{L^\infty(\R^n)} \leq \|\zeta_m \Phi\|_{L^\infty(\R^n)} \leq \|\Phi\|_{L^\infty(\R^n)} \leq \beta$.
Thus, we only need to show \[\norm{ \Phi\zeta_m - \Phi }_{L^{q}(\mathbb{R}^n)}+\norm{ {\rm Div}_{\alpha} (\Phi\zeta_m)  - {\rm Div}_{\alpha}\Phi }_{L^{q}(\Omega)} \to 0 . \]
Since $\abs{\Phi\zeta_m} \leq \abs{\Phi}$ and $\zeta_m\to 1$ almost everywhere on $\mathbb{R}^n$,
using the Lebesgue dominated convergence theorem, we have
$\norm{ \Phi\zeta_m - \Phi }_{L^{q}(\mathbb{R}^n)}\to 0$.
Since $m$ is sufficiently large, we may assume without loss of generality that $\zeta_m(x) \equiv 1$ when ${\rm dist} (x, \Omega)\leq 1$.
Since $\Phi \in X_{\text{Riesz}}$,
there exists a sequence $\Phi_k \in C_c^\infty(\R^n)$,
 such that $\|\Phi_k - \Phi\|_{X_{\text{Riesz}}} \xrightarrow{k \to \infty} 0$ and
\[
 \int_{\R^n} \Phi_k \cdot D^\alpha \varphi = -\int_{\Omega} \Div_\alpha (\Phi_k)\, \varphi , \quad \forall \varphi \in C_c^\infty(\Omega).
\]
By the definition of $X_{\text{Riesz}}$-convergence, 
we can take the limits of both sides, which implies
\[
 \int_{\R^n} \Phi \cdot D^\alpha \varphi = -\int_{\Omega} \Div_\alpha (\Phi)\, \varphi , \quad \forall \varphi \in C_c^\infty(\Omega).
\]
Now we claim that $\Div_\alpha (\zeta_m \Phi) \in L^q(\Omega)$. Indeed, let $\varphi \in C_c^\infty(\Omega)$, then 
\[
\begin{split}
\int_{\R^n} \Phi \zeta_m \cdot D^\alpha \varphi =& \int_{\R^n} \Phi \cdot D^\alpha (\zeta_m \varphi) + \int_{\R^n} \Phi \cdot \brac{\zeta_m D^\alpha (\varphi)-D^\alpha (\zeta_m \varphi)}\\
=& \int_{\R^n} \Phi \cdot D^\alpha \varphi + \int_{\R^n} \Phi \cdot \brac{\zeta_m D^\alpha (\varphi)-D^\alpha (\zeta_m \varphi)} .
\end{split}
\]
In the last step we used that $\zeta_m \varphi = \varphi$ by the definition of $\zeta_m$ and the support of $\varphi$.
Using e.g. the Coifman-McIntosh-Meyer commutator estimate (e.g., see \cite[Theorem 6.1]{LS} or \cite[Theorem 3.2.1]{IngmannsMaster}), we have
\[
 \|\zeta_m D^\alpha (\varphi)-D^\alpha (\zeta_m \varphi)\|_{L^{q'}(\R^n)} \aleq [\zeta_m]_{\lip} \|\lapms{1-\alpha} \varphi\|_{L^{q'}(\R^n)},
\]
where $\lapms{1-\alpha}$ denotes the Riesz potential and $q' = \frac{q}{q-1}$. Since $\varphi$ has compact support in the bounded set $\Omega$, we have by Sobolev-Poincar\'e inequality
\[
 \|\lapms{1-\alpha} \varphi\|_{L^{q'}(\R^n)} \aleq_{\Omega} \| \varphi\|_{L^{q'}(\Omega)},
\]
which follows from the usual blow-up argument used for the classical Poincar\'e inequality.
That is, we have shown that for any $\varphi \in C_c^\infty(\Omega)$,
\[
\abs{\int_{\R^n} \Div_\alpha \brac{\Phi \zeta_m -\Phi}\,  \varphi } \equiv
 \abs{\int_{\R^n} \brac{\Phi \zeta_m -\Phi}\cdot D^\alpha \varphi } \leq C(\Omega,q) \|\Phi\|_{L^q(\R^n)}\, \|\varphi\|_{L^{q'}(\R^n)} [\zeta_m]_{\lip}.
\]
Observe that $[\zeta_m]_{\lip}  \aleq \frac{1}{m}$, so we have shown by duality that
\[
 \|\Div_\alpha \brac{\Phi \zeta_m -\Phi}\|_{L^q(\Omega)} \aleq \frac{1}{m}\|\Phi\|_{L^q(\R^n)} \xrightarrow{m \to \infty} 0,
\]
which establishes \eqref{eq:prop:goalstep1}

\underline{Step 2:}
By translation we may assume $\Omega$ is convex and $0 \in \Omega$. For $\rho > 1$,
we set
\begin{equation} \label{rhoOmega,def}
 \Omega_\rho := \rho \Omega = \{\rho x: \quad x \in \Omega\}.
\end{equation}
Then, from \Cref{la:starshapeddist} and \Cref{la:starshapeddist2}, we have $\Omega \subset\subset \Omega_{\rho}$ for $\rho > 1$.

In this step, we  show that
\begin{equation}\label{eq:prop:goalstep2}
\begin{aligned} 
&
\forall \eps > 0\ \exists \Theta_2 \in X_{\text{Riesz}},\ \rho > 1, \supp \Theta_2 \subset \subset \R^n, \Div_\alpha \Theta_2 \in L^q(\Omega_\rho), 
\\
& \|\Theta_2\|_{L^\infty(\R^n)} \leq \beta, \quad \|\Theta_2 - \Phi\|_{X_{\text{Riesz}}} < \eps.
\end{aligned}
\end{equation}
By the results in  Step 1, we only need to show \eqref{eq:prop:goalstep2}  
with $\Phi$ replaced by $\Theta_1$. We let $\Psi:= \Theta_1$ for convenience.
For $\rho > 1$,
\[
 \Psi_\rho(x) := \Psi(x/\rho).
\]
Then we have
\[
 \|\Psi_\rho\|_{L^\infty(\R^n)} = \|\Psi\|_{L^\infty(\R^n)} \leq \beta.
\]
Moreover, in view of \Cref{la:Lpconv}, we have
\[
 \|\Psi_\rho-\Psi\|_{L^q(\R^n)} \xrightarrow{\rho \to 1^+} 0.
\]
It remains to show  $\Div_\alpha \Psi_\rho \in L^q(\Omega_\rho)$ and
\[
 \|\Div_\alpha \Psi_\rho-\Div_\alpha \Psi\|_{L^q(\Omega)} \xrightarrow{\rho \to 1^+} 0.
\]
We first observe that
\[
 \int_{\R^n} \Psi_\rho \cdot D^\alpha \varphi\, dx = \rho^{n-\alpha} \int_{\R^n} \Psi \cdot  D^\alpha \brac{ \varphi\brac{ \rho \ \cdot} } dx.
\]
Thus, from $\varphi(\cdot) \in C_c^\infty(\Omega_\rho)$ 
we have $\varphi\brac{ \rho  \ \cdot} \in C_c^\infty(\Omega)$. 
Then we conclude
\[
 \Div_\alpha \Psi_\rho(x) = \rho^{-\alpha} (\Div_\alpha \Psi)(x/\rho) \quad \text{for a.e. $x \in \Omega_\rho$}.
\]
In particular $\Div_\alpha \Psi_\rho \in L^q(\Omega_\rho)$.
We now have
\[
\begin{split}
 & \|\Div_\alpha \Psi_\rho-\Div_\alpha \Psi\|_{L^q(\Omega)}\\
 &
 \leq \|\Div_\alpha \Psi_\rho-\rho^{-\alpha}\Div_\alpha \Psi\|_{L^q(\Omega)} +  \|\rho^{-\alpha}\Div_\alpha \Psi-\Div_\alpha \Psi\|_{L^q(\Omega)}\\
& = \rho^{-\alpha}\|(\Div_\alpha \Psi)(\cdot/\rho)-(\Div_\alpha \Psi)(\cdot)\|_{L^q(\Omega)} +  (1-\rho^{-\alpha})\|\Div_\alpha \Psi\|_{L^q(\Omega)}\\
& \xrightarrow{\rho \to 1^+} 0,
\end{split}
 \]
where we have used \Cref{la:Lpconv} for the first term and $\Div_\alpha \Psi \in L^q(\Omega)$ for the second term.
This implies that \eqref{eq:prop:goalstep2} is satisfied, by considering $\Psi_\rho$ for $\rho > 1$ close enough to $1$.

\underline{Step 3: Conclusion}
Given $\eps > 0$, we  take $\Psi := \Theta_2$ and pick $\rho > 1$ such that \eqref{eq:prop:goalstep2} is satisfied for $\frac{\eps}{2}$ instead of $\eps$.
Since $\Omega$ is convex, by \Cref{la:starshapeddist} and \Cref{la:starshapeddist2}, 
there exists $D > 0$ such that $\dist(\Omega,\partial \Omega_\rho) > D$. 
We let $\delta_0 := \frac{D}{100}$ and choose $\delta \in (0,\delta_0)$.
Let $\eta \in C_c^\infty(B(0,1))$ 
be the usual symmetric mollifier kernel, and set
\[
 \Psi_\delta := \eta_\delta \ast \Psi.
\]
Since $\supp \Psi \subset \subset \R^n$, we have $\Psi_\delta \in C_c^\infty(\R^n)$ and
\[
 \|\Psi_\delta\|_{L^\infty(\R^n)} \leq \|\Psi\|_{L^\infty(\R^n)} \leq \beta
\]
We also have by usual mollification
\[
 \|\Psi_\delta - \Psi\|_{L^q(\R^n)} \xrightarrow{\delta \to 0} 0.
\]
Lastly, for $\varphi \in C_c^\infty(\Omega)$ we have by Fubini's theorem
\[
 \int_{\R^n} \Psi_\delta \cdot D^\alpha \varphi = \int_{\R^n} \Psi \cdot D^\alpha (\varphi\ast \eta_\delta).
\]
Observe that $\varphi \in C_c^\infty(\Omega)$ implies that $\varphi \ast \eta_\delta \in C_c^\infty(B(\Omega,\delta)) \subset C_c^\infty(\Omega_\rho)$.
Thus, we have
\[
 \int_{\R^n} \Psi_\delta \cdot D^\alpha \varphi = \int_{\R^n} \Div_\alpha \Psi (\varphi\ast \eta_\delta) \quad \forall \varphi \in C_c^\infty(\Omega).
\]
Since $\Div_\alpha \Psi \in L^q(\Omega_\rho)$, we conclude that
\[
 \Div_\alpha \Psi_\delta = (\Div_\alpha \Psi) \ast \eta_\delta \quad \text{in $\Omega$}.
\]
Then, since $\Div_\alpha \Psi \in L^q(\Omega_\rho)$ we have that $(\Div_\alpha \Psi) \ast \eta_\delta$ converges to $\Div_\alpha \Psi$ in $L^q(\Omega)$ as $\delta \to 0$, i.e.
\[
 \|\Div_\alpha \Psi_\delta - \Div_\alpha \Psi\|_{L^q(\Omega)} \xrightarrow{\delta \to 0^+} 0.
\]
Thus, we conclude that
\[
 \|\Psi_\delta - \Psi\|_{X_{\text{Riesz}}} \xrightarrow{\delta \to 0^+} 0.
\]
Now by choosing $\delta>0$ sufficiently small, we have
\[
 \|\Psi_\delta - \Phi\|_{X_{\text{Riesz}}} \leq \|\Psi_\delta - \Psi\|_{X_{\text{Riesz}}} + \|\Psi - \Phi\|_{X_{\text{Riesz}}} \leq \frac{\eps}{2} + \frac{\eps}{2} = \eps.
\]
Letting $\Theta := \Psi_\delta$, we have shown \eqref{eq:prop:goal2},
which implies \eqref{eq:prop:goal}.
Therefore, we have proved $  \tilde{I}_{\beta}(\Phi) \leq I_{\beta}(\Phi)$,
which completes the proof.
\end{proof}

With the help of \Cref{pr:IbetaeqItildebetaRiesz},
 we can now continue with the optimizing problem.
\begin{proof}[Proof of \Cref{lemma:F*andG*}] For $G^*$ the procedure is standard, cf. \cite[Ch. I]{Ekeland99}, and follows from (\ref{def:FenchelC}),
\begin{align*}
  G^*&: Y^* \rightarrow \overline{\R},\\
  G^*(u)&= \sup_{v\in L^q(\Omega)} \left\{\int_{\Omega} v udx - G(v) \right\}
  = \frac{1}{p} \|u+u_N\|_{L^p(\Omega)}^p.
\end{align*}
As for  $F^*$, we follow  \cite[Section 2]{Hintermuller}, with the crucial adaptation of using \Cref{pr:IbetaeqItildebetaRiesz} in the last step
\begin{align*}
  F^*&: V^* \rightarrow  \overline{\R},\\
 F^*(\Psi^*) &= \sup_{\phiR\in V}  \left\{\langle \Psi^*,\phiR\rangle_{V^*,V} - F(\phiR) \right\}
    = \sup_{\phiR\in V}  \left\{\langle \Psi^*,\phiR\rangle_{X^*,X} - I_{\beta}(\phiR) \right\}\\
    &= \sup_{\phiR\in V}  \left\{\langle \Psi^*,\phiR\rangle_{X^*,X} - \tilde{I}_{\beta}(\phiR) \right\}
    = \sup_{\substack{\phiR\in V\cap C_c^\infty(\R^n)\\  \| \phiR\|_{{L^\infty(\R^n)}} \leq \beta}} \langle \Psi^*,\phiR\rangle_{V^*,V}.
\end{align*}
The condition in the last line that we can assume $\Phi \in  C_c^\infty(\R^n)$ is the crucial point of  \Cref{pr:IbetaeqItildebetaRiesz}, and the only place where convexity of $\Omega$ appears.
Finally, by definition we have   $\Lambda^*: Y^*  \rightarrow V^*$. Therefore,
\begin{align*}
 F^*(\Lambda^*u)&= \sup_{\substack{\phiR\in V \cap C_c^\infty(\R^n)\\ \| \phiR\|_{{L^\infty(\R^n)}} \leq \beta}} \langle \Lambda^*u,\phiR\rangle_{V^*,V}
                 = \sup_{\substack{\phiR\in V\cap C_c^\infty(\R^n)\\  \| \phiR\|_{{L^\infty(\R^n)}} \leq \beta}} \langle u,\Lambda \phiR\rangle_{Y^*,Y}\\
                 &= \beta \sup_{\substack{\phiR\in \Cc^\infty(\R^n,\R^n)\\  \| \phiR\|_{{L^\infty(\R^n)}} \leq 1 }} \int_{\R^n} \chi_\Omega u(-\divR \phiR)dx
                 = \beta \VR(\chi_\Omega u;\R^n),
\end{align*}
which concludes the proof. 
\end{proof} 
{From Theorem \ref{theorem:FenchelTheorem}, we have the following result.}
\begin{corollary} \label{cor:dualityR} If $\Omega$ is convex, the problems  (\ref{def:primalProblem1}) and (\ref{def:predual}) are related by
\begin{align*}
\qquad &\min_{\phiR \in  X_{\text{Riesz}}}
\left\{ \frac{1}{q} \|-\divR \phiR\|^{q}_{L^q(\Omega)} - \int_{\Omega} u_N (-\divR \phiR)dx
 + I_{\beta}(\phiR) \right\}\\
  &=  -\min_{u\in L^p(\Omega)} \left\{ \frac{\gamma}{p}\|u-{u}_N\|_{L^p(\Omega)}^p + \beta \VR\left(\chi_\Omega u;\R^n \right) \right\}.
\end{align*}
\end{corollary}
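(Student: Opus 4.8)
The plan is to recognize $(\mathscr{Q}_R)$ as a concrete instance of the abstract minimization problem $(\mathscr{Q})$ and to apply Fenchel's duality theorem, \Cref{theorem:FenchelTheorem}, feeding it the conjugate computations from \Cref{lemma:F*andG*}. With $V := (X_{\text{Riesz}},\|\cdot\|_{X_{\text{Riesz}}})$, $Y := (L^q(\Omega),\|\cdot\|_{L^q(\Omega)})$, and $F,G,\Lambda$ as in \eqref{def:FandG}, the functional minimized in $(\mathscr{Q}_R)$ is precisely $\Phi \mapsto F(\Phi) + G(\Lambda\Phi)$; so once the hypotheses of \Cref{theorem:FenchelTheorem} are checked, the identity follows by substituting $F^*\circ\Lambda^*$ and $G^*$.

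First I would verify the structural assumptions. The space $X_{\text{Riesz}}$ is Banach by construction, being the completion of $\Cc^1(\R^n;\R^n)$ for $\|\cdot\|_{X_{\text{Riesz}}}$, and since this norm dominates $\|\divR\Phi\|_{L^q(\Omega)}$ we have $\Lambda \in \mathscr{L}(V,Y)$. The indicator $F = I_\beta$ is convex, and it is lower semicontinuous on $V$ because $\{\Phi\in X_{\text{Riesz}}:\|\Phi\|_{L^\infty(\R^n)}\le\beta\}$ is closed for $X_{\text{Riesz}}$-convergence (which forces $L^q(\R^n)$-convergence, hence a.e.\ convergence along a subsequence, so the bound passes to the limit). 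The functional $G$ is the sum of the convex continuous map $v\mapsto\tfrac1q\|v\|_{L^q(\Omega)}^q$ (recall $1<q<\infty$) and the continuous linear form $v\mapsto-\int_\Omega u_N v\,dx$, continuous since $u_N\in L^p(\Omega) = (L^q(\Omega))^*$; hence $G$ is convex, lower semicontinuous and continuous on all of $Y$. Taking $v_0:=0\in V$ gives $F(v_0)=0<\infty$, $\Lambda v_0 = 0$, $G(\Lambda v_0)=0<\infty$, and $G$ continuous at $\Lambda v_0$, so the qualification hypothesis of \Cref{theorem:FenchelTheorem} is met.

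Then \Cref{theorem:FenchelTheorem} yields $\inf_{\Phi\in X_{\text{Riesz}}}\{F(\Phi)+G(\Lambda\Phi)\} = \sup_{v\in Y^*}\{-F^*(\Lambda^*v)-G^*(-v)\}$ and existence of a maximizer on the right. Since $1<q<\infty$ we identify $Y^* = L^p(\Omega)$ with the usual pairing; inserting $F^*(\Lambda^*v) = \beta\VR(\chi_\Omega v;\R^n)$ and $G^*(-v) = \tfrac1p\|{-v}+u_N\|_{L^p(\Omega)}^p = \tfrac1p\|v-u_N\|_{L^p(\Omega)}^p$ from \Cref{lemma:F*andG*}, the right-hand side becomes
\[
 \sup_{v\in L^p(\Omega)}\Bigl\{-\beta\VR(\chi_\Omega v;\R^n)-\tfrac1p\|v-u_N\|_{L^p(\Omega)}^p\Bigr\} = -\inf_{v\in L^p(\Omega)}\Bigl\{\tfrac1p\|v-u_N\|_{L^p(\Omega)}^p+\beta\VR(\chi_\Omega v;\R^n)\Bigr\},
\]
which is $-(\mathscr{P}_R)$; here I take the normalization $\gamma=1$ consistent with the choice of $G$ in \eqref{def:FandG}, the general $\gamma>0$ being the obvious rescaling of the coefficient in front of the $L^q$-term, which scales $G^*$ by the matching factor. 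The minimum on the $(\mathscr{P}_R)$-side is attained by \Cref{lemma:exisUniq-I}; the attainment on the $(\mathscr{Q}_R)$-side, which \Cref{theorem:FenchelTheorem} does not provide directly, can be recovered by a truncation argument in the spirit of Step~1 of \Cref{pr:IbetaeqItildebetaRiesz} reducing minimizing sequences to compactly supported competitors, so I would not dwell on it.

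The main point is that the hard work is already behind us: the delicate step — the identity $I_\beta = \tilde I_\beta$ and hence the formula $F^*\circ\Lambda^* = \beta\VR(\chi_\Omega\,\cdot\,;\R^n)$, which is the only place the convexity of $\Omega$ is used (via \Cref{pr:IbetaeqItildebetaRiesz}) — is part of \Cref{lemma:F*andG*}. For the corollary itself the only genuine care points are bookkeeping: the identification $Y^* = L^p(\Omega)$ and of $\Lambda^*$, the sign in $G^*(-v)$, and verifying the finiteness/continuity hypothesis of \Cref{theorem:FenchelTheorem} (for which $v_0=0$ works because $G$ is everywhere finite and continuous on $L^q(\Omega)$).
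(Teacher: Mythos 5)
Your proposal is correct and follows the same route as the paper, which obtains the corollary directly from Theorem~\ref{theorem:FenchelTheorem} combined with the conjugate formulas of Proposition~\ref{lemma:F*andG*}; your explicit verification of the hypotheses (Banach structure of $X_{\text{Riesz}}$, lower semicontinuity of $I_\beta$ via a.e.\ convergence, the qualification condition at $v_0=0$, and the identification $Y^*=L^p(\Omega)$) is, if anything, more detailed than the paper's one-line derivation. The $\gamma$-normalization mismatch you flag is present in the paper's own statement of \eqref{def:FandG} versus \eqref{def:primalProblem1}, so treating it as a harmless rescaling is the right call.
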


It is important to mention that the (predual) problem (\ref{def:predual}) has at least one solution. Moreover, we have the following results for the  optimality conditions, cf. (\ref{def:OptCond}), 
\begin{lemma} Let $\overline{u}$ be the unique solution for (\ref{def:primalProblem1}) and let $\overline{\phiR}$ be  any solution for (\ref{def:predual}).
Then we have 
\begin{align}
\Lambda^*\overline{ u} \in \partial F(\overline{\phiR})&\Leftrightarrow \left \langle  \Lambda^*\overline{u}, \bm v-\overline{\phiR}\right\rangle \leq 0 \quad \forall \bm v\in X_{\text{Riesz}},\label{def:opt_cond1}\\
-\overline{u} \in \partial G(\Lambda \overline{\phiR})&\Leftrightarrow -\overline{u} = - \left| \divR \overline{\phiR}\right|^{q-2}\divR \overline{\phiR}-u_N.  \label{def:opt_cond2}
\end{align}
\end{lemma}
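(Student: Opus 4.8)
The plan is to recognize the two displayed relations as the concrete form of the abstract optimality conditions \eqref{def:OptCond} furnished by \Cref{theorem:FenchelTheorem}, applied to the predual problem \eqref{def:predual} cast as \eqref{def:Q} with $V := X_{\text{Riesz}}$, $Y := L^q(\Omega)$ and $F,G,\Lambda$ as in \eqref{def:FandG}, and then to unwind the resulting subdifferential inclusions. So the first task is to check that \Cref{theorem:FenchelTheorem} is applicable. Both $X_{\text{Riesz}}$ (a completion of a normed space) and $L^q(\Omega)$, with $q = \frac{p}{p-1}\in(1,\infty)$, are Banach spaces; $G(v) = \frac1q\|v\|_{L^q(\Omega)}^q - \int_\Omega u_N v\,dx$ is convex and, since $u_N\in L^p(\Omega)$, continuous on $L^q(\Omega)$; and $F = I_\beta$ is convex and lower semicontinuous on $X_{\text{Riesz}}$, the latter because $X_{\text{Riesz}}$-convergence forces $L^q(\R^n)$-convergence, hence a.e.\ convergence along a subsequence, so the pointwise bound $|\phiR|\le\beta$ is preserved in the limit. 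Finally $v_0 := 0$ has $F(v_0) = 0 <\infty$, $G(\Lambda v_0) = G(0) = 0 <\infty$, and $G$ is continuous at $\Lambda v_0 = 0$, so every hypothesis of \Cref{theorem:FenchelTheorem} holds.

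Next I would identify the dual problem \eqref{def:Q*}. By \Cref{lemma:F*andG*} --- which is where convexity of $\Omega$ enters, through \Cref{pr:IbetaeqItildebetaRiesz} --- one has $G^*(u) = \frac1p\|u+u_N\|_{L^p(\Omega)}^p$ and $F^*(\Lambda^* u) = \beta\,\VR(\chi_\Omega u;\R^n)$, so that \eqref{def:Q*} coincides, up to an overall sign, with \eqref{def:primalProblem1}; this is exactly \Cref{cor:dualityR}. In particular the unique minimizer $\overline u$ of \eqref{def:primalProblem1} provided by \Cref{lemma:exisUniq-I} is a solution of \eqref{def:Q*}. Applying the second part of \Cref{theorem:FenchelTheorem} to the pair $(\overline{\phiR},\overline u)$ --- with the dictionary that the minimization variable in \eqref{def:Q} (there denoted $u$) is our $\phiR$, while the maximization variable in \eqref{def:Q*} (there denoted $v$) is our $u$ --- yields at once
\[
 \Lambda^*\overline u \in \partial F(\overline{\phiR}), \qquad -\overline u \in \partial G(\Lambda\overline{\phiR}).
\]

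It then remains to translate these two inclusions into \eqref{def:opt_cond1} and \eqref{def:opt_cond2}. For the first, I would use the subdifferential characterization \eqref{def:subdiff} with $\mathscr{F} = F = I_\beta$: since $\overline{\phiR}$ solves the \emph{constrained} problem \eqref{def:predual} it is feasible, so $F(\overline{\phiR}) = 0$ is finite, and \eqref{def:subdiff} reads $\langle\Lambda^*\overline u,\bm v - \overline{\phiR}\rangle + 0 \le I_\beta(\bm v)$ for all $\bm v\in X_{\text{Riesz}}$; this is vacuous unless $\|\bm v\|_{L^\infty(\R^n)}\le\beta$, where $I_\beta(\bm v) = 0$, hence it is precisely $\langle\Lambda^*\overline u,\bm v - \overline{\phiR}\rangle\le 0$, i.e.\ \eqref{def:opt_cond1}. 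For the second, observe that on $L^q(\Omega)$ with $q\in(1,\infty)$ the functional $G$ is G\^ateaux differentiable everywhere, with derivative $DG(v) = |v|^{q-2}v - u_N \in L^{q/(q-1)}(\Omega) = L^p(\Omega)$, so $\partial G(v)$ is the singleton $\{|v|^{q-2}v - u_N\}$; evaluating at $v = \Lambda\overline{\phiR} = -\divR\overline{\phiR}$ turns $-\overline u\in\partial G(\Lambda\overline{\phiR})$ into $-\overline u = |{-}\divR\overline{\phiR}|^{q-2}({-}\divR\overline{\phiR}) - u_N = -|\divR\overline{\phiR}|^{q-2}\divR\overline{\phiR} - u_N$, which is \eqref{def:opt_cond2}.

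The step that needs the most care --- more bookkeeping than genuine obstacle --- is the middle paragraph: matching the primal/dual roles and the sign conventions of \eqref{def:Q} and \eqref{def:Q*} so that the denoising minimizer $\overline u$ is legitimately the solution of \eqref{def:Q*} one substitutes into \eqref{def:OptCond}, and remembering that identifying \eqref{def:Q*} with \eqref{def:primalProblem1} already uses convexity of $\Omega$ via \Cref{cor:dualityR}. The existence of a predual minimizer $\overline{\phiR}$, which keeps the statement nonvacuous, is assumed in the hypothesis. The rest is the routine computation of $\partial I_\beta$, a normal cone, and of $\partial G$, a single gradient.
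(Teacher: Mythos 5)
Your proof is correct and follows essentially the same route as the paper's, which simply invokes \eqref{def:subdiff} for the first equivalence and the G\^ateaux differentiability of $G$ together with the duality-map formula $\partial\|\cdot\|_{L^q(\Omega)}^q(u)=\{q|u|^{q-2}u\}$ for the second; your extra verification of the hypotheses of \Cref{theorem:FenchelTheorem} and of \Cref{cor:dualityR} is harmless additional bookkeeping rather than a different argument. Your remark that the variational inequality in \eqref{def:opt_cond1} carries content only for feasible $\bm v$ (those with $I_\beta(\bm v)=0$) is the correct reading of the statement.
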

\begin{proof}
It is clear that (\ref{def:opt_cond1}) follows from (\ref{def:subdiff}). On the other hand,
    if $G$ is {\it G\^ateaux dif{}ferentiable} at $u\in Y$, then $\partial G(u)=\{G'(u) \}$, cf. \cite[Prop. I.5.3]{Ekeland99}. 
    In turn, the following property about  the {\it duality map}, it is also well known: 
\begin{align*}
\begin{aligned}
\partial \|\cdot \|_{L^q(\Omega)}^{q}: L^{q}(\Omega) & \rightarrow L^{p}(\Omega)\\
u &\mapsto \{q |u|^{q-2}u\},
\end{aligned}
\end{align*}
which proves (\ref{def:opt_cond2}) and finishes the proof.
\end{proof}
\subsection*{Gagliardo-Type}
Next, we focus on the Gagliardo case. We refer to \cite{NovagaOnue21} 
where they studied a related problem. As in case of Riesz, we begin by 
establishing the existence and uniqueness of solution to (\ref{def:primalProblem2}).
\begin{lemma}\label{lemma:exisUniq-II} 
For $p\in (1,p^\infty)$, the problem (\ref{def:primalProblem2})  has a unique solution  $\overline{u}\in \BVG(\Omega)\cap L^p(\Omega).$  
\end{lemma}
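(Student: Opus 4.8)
The plan is to mirror the proof of \Cref{lemma:exisUniq-I}, substituting the Riesz-type ingredients by their Gagliardo-type counterparts from \Cref{s:bvalphagag}. For $p\in[1,\infty)$ I would define
\[
 \mathscr{J}_G(u):=\frac{\gamma}{p}\|u-u_N\|_{L^p(\Omega)}^p+\beta\,\var_\alpha(u;\Omega),\qquad u\in L^p(\Omega),
\]
and observe that $0\le\inf_{u\in L^p(\Omega)}\mathscr{J}_G(u)\le\mathscr{J}_G(0)=\tfrac{\gamma}{p}\|u_N\|_{L^p(\Omega)}^p<\infty$, so the infimum is finite (recall $\var_\alpha(0;\Omega)=0$). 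The direct method then proceeds as in the Riesz case: take a minimizing sequence $(u_k)_{k\in\N}\subset L^p(\Omega)$; for $k$ large the bound $\mathscr{J}_G(u_k)\le\tfrac{\gamma}{p}\|u_N\|_{L^p(\Omega)}^p+1$ forces $(u_k)$ to be bounded in $L^p(\Omega)$ and $\var_\alpha(u_k;\Omega)$ to be bounded. Since $\Omega$ is bounded we have $L^p(\Omega)\hookrightarrow L^1(\Omega)$, hence $\sup_k\|u_k\|_{bv^\alpha(\Omega)}<\infty$.

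Next I would invoke \Cref{weak_compGagl}, whose hypotheses hold because throughout this section $\Omega$ is open, bounded and has Lipschitz boundary: there exist $\overline{u}\in bv^\alpha(\Omega)$ and a subsequence $(u_{k_i})_i$ with $\var_\alpha(\overline{u};\Omega)\le\liminf_{i\to\infty}\var_\alpha(u_{k_i};\Omega)$ and $u_{k_i}\to\overline{u}$ in $L^q(\Omega)$ for every $q\in[1,p^\infty)$. Since $p\in(1,p^\infty)$ this in particular gives $u_{k_i}\to\overline{u}$ in $L^p(\Omega)$, so $\overline{u}\in L^p(\Omega)$ and $\|u_{k_i}-u_N\|_{L^p(\Omega)}^p\to\|\overline{u}-u_N\|_{L^p(\Omega)}^p$. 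Combining this with the lower semicontinuity of the variation yields $\mathscr{J}_G(\overline{u})\le\liminf_i\mathscr{J}_G(u_{k_i})=\inf_{u\in L^p(\Omega)}\mathscr{J}_G(u)$, so $\overline{u}\in bv^\alpha(\Omega)\cap L^p(\Omega)$ is a minimizer of \eqref{def:primalProblem2}.

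Finally, uniqueness follows verbatim as in \Cref{lemma:exisUniq-I}: $\var_\alpha(\cdot;\Omega)$ is convex, being a pointwise supremum of linear functionals (cf. \Cref{def:BV}), while for $p>1$ the functional $u\mapsto\tfrac{\gamma}{p}\|u-u_N\|_{L^p(\Omega)}^p$ is strictly convex on $L^p(\Omega)$; hence $\mathscr{J}_G$ is strictly convex and admits at most one minimizer, and the convex combination estimate from \Cref{lemma:exisUniq-I} goes through unchanged. The only genuinely Gagliardo-specific input — and thus the step to get right — is the compact embedding of $bv^\alpha(\Omega)$ into $L^p(\Omega)$ together with the $L^1$-lower semicontinuity of $\var_\alpha$; both are already at hand, the former from \Cref{th:bvsws1} combined with the extension and Rellich arguments underlying \Cref{weak_compGagl}, the latter from \Cref{la:bvlowersem}. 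Everything else is routine.
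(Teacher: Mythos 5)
Your proposal is correct and follows exactly the route the paper intends: the paper's own proof of this lemma is a one-line reference to repeating the argument of \Cref{lemma:exisUniq-I} with \Cref{weak_compGagl} supplying the compactness, which is precisely what you carry out (together with the lower semicontinuity from \Cref{la:bvlowersem} and strict convexity of the $L^p$ fidelity term for $p>1$). No gaps.
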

\begin{proof}
 The proof is similar to the Riesz case in Lemma~\ref{lemma:exisUniq-I}, after using 
 Proposition \ref{weak_compGagl}.
\end{proof}
Now, we characterize the minimizers of  (\ref{def:primalProblem2})   using the  predual strategy
{as discussed in the Riesz case}.  
Note that $u$ does not need to be  extended by zero outside $\Omega$. 
{As a result, our approach is largely motivated by \cite[Section 2]{Burger}.} 
We now study the predual problem associated to (\ref{def:primalProblem2}).  
In a similar way as {in the} Riesz case,  we consider the spaces 
\begin{align}
     X_{\text{Gagliardo}} = \overline{\left\{\Phi: \phiG \in \Cc^1(\Omega \times \Omega),\ \phiG( x, y) = -\phiG( y,x)\right\}}^{\|\cdot\|_{X_{\text{Gagliardo}}}}, \label{def:Xms}
\end{align}
where 
\begin{align*}
\|\Phi \|_{X_{\text{Gagliardo}}}:= \sqrt[q]{ \| \phiG\|^q_{L^q(\Ep_{od} \Omega)} + \|\divG \phiG\|^q_{L^q(\Omega)} }\, ,
\end{align*}
which is well defined because of Lemma \ref{Gagliardo.div.Lp}. Observe that we can equivalently assume 
$\Phi \equiv 0$ in $(\R^n \times \R^n) \sm (\Omega \times \Omega)$ and set
\begin{align*}
\|\Phi \|_{X_{\text{Gagliardo}}}:= \sqrt[q]{ \| \phiG\|^q_{L^q(\Ep_{od} \R^n)} + \|\divG \phiG\|^q_{L^q(\R^n)} } \, .
\end{align*}

As in the {Riesz case}, we will use the indicator function $I_\beta$ for some $\beta > 0$. For 
$\Phi\in X_{\text{Gagliardo}}$, we define 
\begin{align*}
    I_{\beta}(\Phi):=
    \begin{cases}
        0 &: \| \Phi \|_{L^\infty(\R^n \times \R^n)} \leq \beta,\\
        +\infty &: \mbox{otherwise.}
    \end{cases}
\end{align*}

As in the Riesz case, our main novelty is that we are able to pass from $I_\beta$ to a new $\tilde{I}_\beta$ 
which has better approximation properties.
\begin{proposition}\label{pr:IbetaeqItildebetaGag}
If $\Omega$ is convex then for all $\Phi \in X_{\text{Gagliardo}}$,
\[
 I_{\beta}(\Phi) = \tilde{I}_{\beta}(\Phi),
\]
where 
\[
    \tilde{I}_{\beta}(\phiR):=
    \begin{cases}
        0 &: \text{if there exists $\Psi_k \in C_c^\infty(\Omega \times \Omega)$,} 
        \\
        & \text{ $\Psi_k\to \Phi$ in $X_{\text{Gagliardo}}$ such that $\sup_{k}\| \Psi_k\|_{L^\infty(\R^n \times \R^n)} \leq \beta$},\\
        +\infty &: \mbox{otherwise.}
    \end{cases}
\]
\end{proposition}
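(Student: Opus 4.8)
The plan is to mirror the proof of \Cref{pr:IbetaeqItildebetaRiesz} step by step, replacing the Riesz gradient $D^\alpha$ and divergence $\Div_\alpha$ by their Gagliardo counterparts $\dG$ and $\divG$, and replacing the scaling of functions on $\R^n$ by the scaling of antisymmetric kernels on $\R^n\times\R^n$. As before, the easy inclusion $\tilde I_\beta(\Phi)\geq I_\beta(\Phi)$ follows because $X_{\text{Gagliardo}}$-convergence implies $L^q(\Ep_{od}\R^n)$-convergence, hence (after passing to a subsequence) a.e. convergence, so $\sup_k\|\Psi_k\|_{L^\infty(\R^n\times\R^n)}\leq\beta$ forces $\|\Phi\|_{L^\infty(\R^n\times\R^n)}\leq\beta$. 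The content is the reverse implication $I_\beta(\Phi)=0\Rightarrow\tilde I_\beta(\Phi)=0$, i.e. given $\Phi\in X_{\text{Gagliardo}}$ with $\|\Phi\|_{L^\infty(\R^n\times\R^n)}\leq\beta$ and $\eps>0$, producing $\Theta\in C_c^\infty(\Omega\times\Omega)$ antisymmetric with $\|\Theta\|_{L^\infty(\R^n\times\R^n)}\leq\beta$ and $\|\Theta-\Phi\|_{X_{\text{Gagliardo}}}<\eps$.

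I would organize this in the same three steps. \textbf{Step 1 (compact support away from the diagonal complement):} since in the Gagliardo setting the test functions already live on $\Omega\times\Omega$, the cut-off is used to push the support into a set $K\times K$ with $K\subset\subset\Omega$. Pick a diagonal cut-off $\zeta_m(x,y)=\vartheta_m(x)\vartheta_m(y)$ with $\vartheta_m\in C_c^\infty(\Omega)$, $0\leq\vartheta_m\leq1$, $\vartheta_m\equiv1$ on an exhausting sequence of compacts, and $[\vartheta_m]_{\lip}$ controlled; set $\Theta_1:=\zeta_m\Phi$, which keeps $\|\Theta_1\|_{L^\infty}\leq\beta$ and stays antisymmetric since $\zeta_m$ is symmetric. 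The convergence $\|\Theta_1-\Phi\|_{L^q(\Ep_{od}\R^n)}\to0$ is dominated convergence; for the divergence piece one needs a commutator estimate $\|\zeta_m\dG\varphi-\dG(\zeta_m\varphi)\|$ analogous to the Coifman–McIntosh–Meyer bound used in the Riesz case — for the Gagliardo derivative this commutator is more elementary, as $\zeta_m(x,y)\tfrac{\varphi(x)-\varphi(y)}{|x-y|^\alpha}-\tfrac{(\zeta_m\varphi)(x)-(\zeta_m\varphi)(y)}{|x-y|^\alpha}$ involves the difference quotient of $\zeta_m$ times values of $\varphi$, and can be bounded using $[\vartheta_m]_{\lip}\aleq\tfrac1m$ together with a Poincaré-type inequality on the bounded set $\Omega$ to recover $\|\varphi\|_{L^{q'}(\Omega)}$. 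Duality then gives $\|\divG(\Phi\zeta_m-\Phi)\|_{L^q(\Omega)}\aleq\tfrac1m\|\Phi\|_{L^q(\Ep_{od}\R^n)}\to0$.

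\textbf{Step 2 (dilation to gain room):} translate so $0\in\Omega$ and, for $\rho>1$, set $\Psi_\rho(x,y):=\Psi(x/\rho,y/\rho)$ where $\Psi:=\Theta_1$; this preserves antisymmetry and the $L^\infty$ bound, and $\|\Psi_\rho-\Psi\|_{L^q(\Ep_{od}\R^n)}\to0$ as $\rho\to1^+$ by the continuity-of-translations/dilations argument (the analogue of \Cref{la:Lpconv}). A change of variables in the identity $\int\Psi_\rho\cdot\dG\varphi = c\,\rho^{?}\int\Psi\cdot\dG(\varphi(\rho\,\cdot))$ shows $\divG\Psi_\rho(x)=\rho^{-\alpha}(\divG\Psi)(x/\rho)$ on $\Omega_\rho$ (one must track the homogeneity of the kernel $|x-y|^{-n-\alpha}$ carefully to get the right power of $\rho$), so $\divG\Psi_\rho\in L^q(\Omega_\rho)$, and splitting $\divG\Psi_\rho-\divG\Psi=(\divG\Psi_\rho-\rho^{-\alpha}\divG\Psi)+(\rho^{-\alpha}-1)\divG\Psi$ gives $L^q(\Omega)$-convergence. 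Crucially, \Cref{la:starshapeddist} and \Cref{la:starshapeddist2} (convexity of $\Omega$) guarantee $\Omega\subset\subset\Omega_\rho$ with $\dist(\Omega,\partial\Omega_\rho)>D>0$. \textbf{Step 3 (mollification):} mollify in both variables, $\Psi_\delta:=\eta_\delta\ast\Psi$ where $\eta_\delta$ is a symmetric kernel on $\R^n\times\R^n$ (equivalently convolve the single-variable translation as in \eqref{eq:Gagmollifier}); antisymmetry and the $L^\infty$ bound survive, and for $\delta<D/100$ one shows $\divG\Psi_\delta=(\divG\Psi)\ast\eta_\delta$ on $\Omega$ using that $\varphi\ast\eta_\delta\in C_c^\infty(\Omega_\rho)$ for $\varphi\in C_c^\infty(\Omega)$, whence $L^q(\Omega)$-convergence from $\divG\Psi\in L^q(\Omega_\rho)$. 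Combining the three steps with a triangle inequality and $\eps/2$ budgeting yields $\Theta:=\Psi_\delta\in C_c^\infty(\Omega\times\Omega)$ with the required properties.

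The main obstacle is Step 2, specifically getting the scaling identity $\divG\Psi_\rho=\rho^{-\alpha}(\divG\Psi)(\cdot/\rho)$ correct: one has to be careful because the nonlocal divergence involves the singular kernel $|x-y|^{-n-\alpha}$ and the symmetrized difference $F(x,y)-F(y,x)$, so the change of variables $x'=x/\rho$, $y'=y/\rho$ produces a factor $\rho^n$ from $dy$ and $\rho^{-n-\alpha}$ from the kernel, leaving $\rho^{-\alpha}$ — but this must be reconciled with how the pairing against $\dG\varphi$ scales, and one must verify that the convexity-based distance lemmas still deliver $\Omega\subset\subset\Omega_\rho$ with a uniform gap so that the subsequent mollification in Step 3 does not leave $\Omega_\rho$. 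A secondary subtlety, absent in the Riesz case, is ensuring that every approximation preserves the antisymmetry constraint $\Phi(x,y)=-\Phi(y,x)$ built into $X_{\text{Gagliardo}}$ — this is why one uses symmetric cut-offs $\zeta_m$, symmetric dilations, and symmetric mollifiers throughout. Everything else is a routine transcription of the Riesz proof, and I would state it as such rather than repeating the details.
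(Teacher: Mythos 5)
Your overall architecture (easy direction via a.e.\ convergence; hard direction via truncation, dilation, mollification, each preserving antisymmetry and the $L^\infty$ bound) is the right kind of argument, and your scaling identity $\divG\Psi_\rho=\rho^{-\alpha}(\divG\Psi)(\cdot/\rho)$ is correct. But there are two genuine gaps, and they occur exactly at the point where the Gagliardo case diverges from the Riesz case.

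First, your Step 1 does not work. In the Riesz proof the cut-off $\zeta_m$ truncates at infinity, at scale $m\to\infty$, which is why $[\zeta_m]_{\lip}\aleq \frac1m\to 0$ and the commutator term dies. Here you want $\vartheta_m\equiv 1$ on an exhaustion $K_m\nearrow\Omega$ of a \emph{bounded} domain with $\supp\vartheta_m\subset\Omega$; any such cut-off satisfies $[\vartheta_m]_{\lip}\gtrsim \dist(K_m,\partial\Omega)^{-1}\to\infty$, not $\aleq\frac1m$. The transition layer concentrates at $\partial\Omega$ and your duality/commutator bound for $\|\divG(\zeta_m\Phi-\Phi)\|_{L^q(\Omega)}$ has no reason to tend to $0$; a sharp cut-off near the boundary is precisely what destroys $L^q$ control of the nonlocal divergence. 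Second, your Step 2 dilates in the wrong direction. With $\rho>1$ the support of $\Psi_\rho$ is pushed \emph{outward} toward (or past) $\partial\Omega\times\partial\Omega$, whereas the admissible test functions must lie in $\Cc^\infty(\Omega\times\Omega)$; the fact that $\Omega\subset\subset\Omega_\rho$ is irrelevant here because the constraint is on the support of $\Phi$, not on where $\divG\Phi$ must be integrable (in the Gagliardo setting $\divG\Phi\in L^q(\R^n)$ globally by \Cref{Gagliardo.div.Lp}-type estimates, so there is no analogue of the ``$\Div_\alpha\in L^q(\Omega_\rho)$ for a larger set'' issue). The correct move --- and the one the paper makes, explicitly flagging the contrast with the Riesz case --- is to scale \emph{inward}: take $\rho<1$, so that by convexity (\Cref{la:starshapeddist}, \Cref{la:starshapeddist2}) one has $\Omega_\rho\subset\subset\Omega$ with a positive gap $D=\dist(\Omega_\rho,\partial\Omega)$, and then mollify at scale $\delta<D/100$ so that the result lands in $\Cc^\infty(\Omega\times\Omega)$. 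With the inward dilation your Step 1 becomes entirely unnecessary (no cut-off at all is needed, since elements of $X_{\text{Gagliardo}}$ already vanish outside the bounded set $\Omega\times\Omega$), and the proof reduces to two steps: inward dilation, then mollification, with the $L^q(\Ep_{od}\R^n)$ and $L^q$-divergence convergences handled exactly as you describe.
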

\begin{proof}
We may assume without loss of generality that $\Omega$ is convex and $0 \in \Omega$.
First, we establish that $I_{\beta}(\Phi) \leq \tilde{I}_{\beta}(\Phi)$ for all  $\Phi\in X_{\text{Gagliardo}}$ .
The case $\tilde{I}_{\beta}(\Phi)=\infty$ is trivial. 
Suppose that $\tilde{I}_{\beta}(\Phi)=0$,
then there exist $\Psi_k\in  C_c^\infty(\Omega\times\Omega)$ with
$\Psi_k (x,y) = -\Psi_k (y,x)$ and $\sup_{k}\| \Phi_k \|_{L^\infty(\R^n \times \R^n)} \leq \beta$,
such that $\Psi_k\to \Phi$ in $X_{\text{Gagliardo}}$.
From the $L^{q}(\Ep^{od} \R^n)$-convergence of $\Psi_k$,
 we can find a subsequence, still denoted by $\Psi_k$, such  that
\[
 \frac{|\Psi_k (x,y)-\Phi(x,y)|}{|x-y|^{\frac{n}{ q }+s}} \xrightarrow{k \to \infty} 0 \quad \text{for $\mathcal{L}^{2n}$-a.e. $(x,y) \in \R^{2n}$},
\]
which in particular implies
\[
 |\Psi_k (x,y)-\Phi(x,y)| \xrightarrow{k \to \infty} 0 \quad \text{for $\mathcal{L}^{2n}$-a.e. $(x,y) \in \R^{2n}$}.
\]
Thus, we have
\[
 |\Phi(x,y)| \leq \beta \quad \text{for $\mathcal{L}^{2n}$-a.e. $(x,y) \in \R^{2n}$},
\]
which implies that $I_{\beta}(\Phi) = 0$
 and proves that $I_{\beta}(\Phi) \leq \tilde{I}_{\beta}(\Phi)$ for all  $\Phi\in X_{\text{Gagliardo}}$.

Now we to prove the opposite direction, i.e.
$I_{\beta}(\Phi) \geq \tilde{I}_{\beta}(\Phi)$ for all  $\Phi\in X_{\text{Gagliardo}}$. If $I_{\beta}(\Phi) = \infty$ there is nothing to show, so we actually need to show
\[
 I_{\beta}(\Phi)=0 \quad \Rightarrow \quad \tilde{I}_{\beta}(\Phi)=0.
\]
Assuming that $\Phi \in X_{\text{Gagliardo}}$ satisfies $\|\Phi\|_{L^\infty(\R^n \times \R^n)} \leq \beta$, 
we  prove that
\begin{equation}\label{eq:Gprop:goal2}
 \forall \eps > 0\ \exists \Theta \in C_c^\infty(\Omega\times \Omega), \quad \|\Theta\|_{L^\infty(\R^n \times \R^n)} \leq \beta, \quad \|\Theta - \Phi\|_{X_{\text{Gagliardo}}} < \eps.
\end{equation}

\underline{Step 1}:
In contrast to the Riesz case, we scale the functions inwards for the Gagliardo case,
which ensures that the mollification produces a function still in $C_c^\infty(\Omega)$.
Using again the notation 
\[
 \Omega_\rho := \rho \Omega = \{\rho x: \quad x \in \Omega\}
\]
 in \eqref{rhoOmega,def}  with $\rho<1$.
Since $\Omega$ is convex and $0 \in \Omega$, we have that $\Omega_\rho \subset \subset \Omega$ for any $\rho < 1$.
We prove that
\begin{equation}\label{eq:Gprop:goal2Step1}
 \forall \eps > 0\ \exists \Theta_{1} \in X_{\text{Gagliardo}},\ \exists\rho < 1,
 \ \supp \Theta_{1} \subset \Omega_\rho \times \Omega_\rho, \; \|\Theta_{1} \|_{L^\infty(\R^n \times \R^n)} \leq \beta, \; \|\Theta_{1} - \Phi\|_{X_{\text{Gagliardo}}} < \eps.
\end{equation}
For  $\rho < 1$, we define 
\[
 \Phi_\rho(x,y):= \Phi \prts{ \frac{x}{\rho} , \frac{y}{\rho} }.
\]
Then we have $\supp \Phi_\rho \subset \Omega_\rho \times \Omega_\rho$ and $\|\Phi_\rho\|_{L^\infty(\R^n \times \R^n)} = \|\Phi\|_{L^\infty(\R^n \times \R^n)} \leq \beta$.
So in order to establish \eqref{eq:Gprop:goal2Step1} we need to show
\begin{equation}\label{eq:scaledPhiGagconv}
 \|\Phi_\rho - \Phi\|_{X_{\text{Gagliardo}}} \xrightarrow{\rho \to 0} 0.
\end{equation}
We first observe that 
\[
 \frac{\Phi_\rho(x,y)}{|x-y|^{\frac{n}{q}}} = \rho^{-\frac{n}{q}} \frac{\Phi(x/\rho,y/\rho)}{|x/\rho-y/\rho|^{\frac{n}{q}}}.
\]
So we have
\[
\begin{split}
 \|\Phi_\rho - \Phi\|_{L^q(\Ep_{od} \R^n)} \leq& \abs{\rho^{-\frac{n}{q}} -1} \norm{\frac{\Phi(x/\rho,y/\rho)}{|x/\rho-y/\rho|^{\frac{n}{q}}}}_{L^q(\R^n \times \R^n)} +
 \left \|\frac{\Phi(x/\rho,y/\rho)}{|x/\rho-y/\rho|^{\frac{n}{q}}} - \frac{\Phi(x,y)}{|x-y|^{\frac{n}{q}}}\right \|_{L^q(\R^n \times \R^n)}\\
 =& \abs{\rho^{-\frac{n}{q}} -1} \rho^{\frac{2n}{q}}\, \norm{\frac{\Phi(x,y)}{|x-y|^{\frac{n}{q}}}}_{L^q(\R^n \times \R^n)} +
 \left \|\frac{\Phi(x/\rho,y/\rho)}{|x/\rho-y/\rho|^{\frac{n}{q}}} - \frac{\Phi(x,y)}{|x-y|^{\frac{n}{q}}}\right \|_{L^q(\R^n \times \R^n)}\\
=& \abs{\rho^{-\frac{n}{q}} -1} \rho^{\frac{2n}{q}}\, \|\Phi\|_{L^q(\Ep_{od} \R^n)} +
 \left \|\frac{\Phi(x/\rho,y/\rho)}{|x/\rho-y/\rho|^{\frac{n}{q}}} - \frac{\Phi(x,y)}{|x-y|^{\frac{n}{q}}}\right \|_{L^q(\R^n \times \R^n)}\\
 \xrightarrow{\rho \to 1}&0,
 \end{split}
 \]
where for the first term we use that $\|\Phi\|_{L^q(\Ep_{od} \R^n)}=\|\Phi\|_{L^q(\Ep_{od} \Omega)} < \infty$, for the second term we use \Cref{la:Lpconv} in $\R^n \times \R^n$.
Moreover, a direct computation from \eqref{eq:divalpha} yields
\[
 \div_\alpha \Phi_\rho(x) = \rho^{-\alpha} (\div_\alpha \Phi)(x/\rho) \quad \text{a.e. }x \in \R^n.
\]
So again with \Cref{la:Lpconv} we have
\[
\begin{split}
 \|\div_\alpha \Phi_\rho-\div_\alpha \Phi\|_{L^{q}(\R^n)} \leq& \abs{\rho^{-\alpha}-1} \|(\div_\alpha \Phi)(\cdot /\rho)\|_{L^{q}(\R^n)}+\|(\div_\alpha \Phi)(\cdot/\rho)-\div_\alpha \Phi\|_{L^{q}(\R^n)}\\
=& \abs{\rho^{-\alpha}-1} \rho^{\frac{n}{{q}}} \|(\div_\alpha \Phi)\|_{L^{q}(\R^n)}+\|(\div_\alpha \Phi)(\cdot/\rho)-\div_\alpha \Phi\|_{L^{q}(\R^n)}\\
\xrightarrow{\rho \to 1}& 0 
 \end{split}
\]
where we used crucially that by the support of $\Phi$ in $\Omega \times \Omega$ we have
\[
 \|\div_\alpha \Phi\|_{L^{q}(\R^n)} = \|\div_\alpha \Phi\|_{L^{q}(\Omega)} < \infty.
\]
This establishes \eqref{eq:scaledPhiGagconv} and thus \eqref{eq:Gprop:goal2Step1} is proven.

\underline{Step 2}: Let $\Theta_1$ and $\rho  \in (0,1)$ be from Step 1. Set $\delta_0:=\frac{D}{100}$ where $D := \dist(\Omega_\rho,\partial \Omega)>0$. 
Let $\eta \in C_c^\infty(B(0,1))$ be the usual symmetric mollifier,
i.e. $\eta \geq 0$ and $\int \eta = 1$.
For  $\delta \in (0, \delta_0)$, we define $\eta_{\delta}(x):= \eta(x/{\delta}) / {\delta}^n$.
Using the notation from \eqref{eq:Gagmollifier}, we define
\[
 \Psi_\delta(x,y) := \prts{ \eta_\delta \ast \Theta_1 } (x,y).
 \]
Then $\Psi_\delta \in C_c^\infty(B( \Omega_{\rho} \times \Omega_{\rho} , \delta) ) \subset C_c^\infty(\Omega \times \Omega)$
and $\|\Psi_\delta\|_{L^\infty(\R^n \times \R^n)} \leq \|\Theta_1\|_{L^\infty(\R^n \times \R^n)} \leq \beta$.
Notice that 
\begin{equation}
\frac{ \prts{ \eta_{\delta}\ast \Theta_1 } (x,y) }{\abs{x-y}^{n/q}}
=
\prts{ \eta_{\delta}\ast \Xi } (x,y)
\end{equation}
where $\Xi(x',y') :=  \Theta_1(x',y')/ \abs{x'-y'}^{n/q} $.
By the definition of $\Theta_1\in L^{q}(\Ep_{od} \R^n)$, we have
$ \Xi \in L^{q}(\R^n\times \R^n)$.
Thus, we have
\begin{equation}\label{eq:Tian1}
 \|\Psi_\delta - \Theta_1\|_{L^{q}(\Ep_{od}^1\R^n)} \xrightarrow{\delta \to 0} 0.
\end{equation}
For any $x\in\mathbb{R}^n$, by letting $y'=y-z$, we obtain
\begin{equation}
\begin{aligned}
& 
\prts{ {\rm div}_{\alpha} \Psi_{\delta} } (x)
=
\prts{ {\rm div}_{\alpha} \prts{\eta_{\delta} \ast \Theta_1 } }(x)
 =
 -  \int_{\mathbb{R}^n} \frac { \prts{\eta_{\delta} \ast \Theta_1 }(x,y) - \prts{\eta_{\delta} \ast \Theta_1 }(y,x) }  { \abs{ x-y}^{n+\alpha} } dy
\\
& =
 -  \int_{\mathbb{R}^n} \prts{ \int_{\mathbb{R}^n}
 \eta_{\delta} (z) \prts{ \Theta_1 (x-z,y-z) -  \Theta_1 (y-z,x-z) } dz} 
 \frac { dy }  { \abs{ x-y}^{n+\alpha} } 
\\
& =
  \int_{\mathbb{R}^n} 
 \eta_{\delta} (z) 
 \prts{ -  \int_{\mathbb{R}^n}
 \frac { \Theta_1  (x-z,y') -  \Theta_1 (y',x-z)  }  { \abs{ (x - z) - y'}^{n+\alpha} }  dy' 
 }
 dz
\\
& =
\int_{\mathbb{R}^n} 
 \eta_{\delta} (z) 
  ({\rm div}_{\alpha} \Theta_1 ) (x-z)
 dz
= 
\prts{ \eta_{\delta} \ast  
  ({\rm div}_{\alpha} \Theta_1 ) }(x).
\end{aligned}
\end{equation}
Thus, we have 
\begin{equation}\label{eq:Tian2}
\begin{aligned}
&
\norm{ {\rm div}_{\alpha} \Psi_{\delta} - {\rm div}_{\alpha} \Theta_1 }_{L^{q}(\R^n)} 
=
\norm{ \eta_{\delta} \ast ( {\rm div}_{\alpha} \Theta_1) - {\rm div}_{\alpha} \Theta_1 }_{L^{q}(\R^n)} 
\to 0
\end{aligned}
\end{equation}
as $\delta\to 0^+$.
Using \eqref{eq:Tian1} and \eqref{eq:Tian2},
for a sufficiently small $\delta$, the function $\Theta:= \Psi_{\delta}$
satisfies the requirements in \eqref{eq:Gprop:goal2}, which completes the proof.
\end{proof}

Now we continue with the optimizing problem.
We  set $V =\left( X_{\text{Gagliardo}}, \|\cdot\|_{ X_{\text{Gagliardo}}}  \right),$ $Y=\left(L^q(\Omega),\|\cdot\|_{L^q(\Omega)} \right)$
 and the operators 
\begin{align}
\begin{aligned}
G: Y  & \rightarrow\overline{\R},\quad    G(v):= \frac{1}{q} \|v\|_{L^q(\Omega)}^{q} - \int_{\Omega} u_N vdx,\\
 F:V & \rightarrow  \overline{\R},\quad
 F(\phiG):= I_{\beta}(\phiG),\\[0.2cm]
  \Lambda: V & \rightarrow Y,\quad
 \Lambda( \Phi) := -\divG  \phiG.
  \end{aligned} \label{def:FandG_G}
\end{align}
Similarly as in Lemma \ref{lemma:F*andG*},  we have  

\begin{corollary} Let $\Omega$ be open, bounded, and convex set, $V =\left( X_{\text{Gagliardo}}, \|\cdot\|_{ X_{\text{Gagliardo}}}  \right),$ $Y=\left(L^q(\Omega),\|\cdot\|_{L^q(\Omega)} \right)$, and let $F,G$ and $\Lambda$ defined as in  (\ref{def:FandG_G}), then  for all $ u \in L^p(\Omega)$
\begin{align*}
 G^*(-u)=& \,\,\frac{1}{p} \| u-u_N\|^p_{L^p(\Omega)}, \quad  \mbox{and }\\
 F^*(\Lambda^* u) =& \,\, \beta \VG(u;\Omega).
\end{align*}
\end{corollary}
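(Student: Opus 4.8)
The plan is to mirror exactly the structure of the Riesz-type computation in \Cref{lemma:F*andG*}, since the abstract setup in \eqref{def:FandG_G} is formally identical to \eqref{def:FandG}. First I would compute $G^*(-u)$: since $G(v) = \frac{1}{q}\|v\|_{L^q(\Omega)}^q - \int_\Omega u_N v\,dx$, one has by definition of the Fenchel conjugate that
\[
 G^*(-u) = \sup_{v \in L^q(\Omega)} \left\{ -\int_\Omega u v\,dx - \frac{1}{q}\|v\|_{L^q(\Omega)}^q + \int_\Omega u_N v\,dx \right\} = \sup_{v \in L^q(\Omega)} \left\{ \int_\Omega (u_N - u) v\,dx - \frac{1}{q}\|v\|_{L^q(\Omega)}^q \right\}.
\]
This is the standard Legendre computation for the power functional: the supremum is attained and equals $\frac{1}{p}\|u_N - u\|_{L^p(\Omega)}^p = \frac{1}{p}\|u - u_N\|_{L^p(\Omega)}^p$, using $\frac 1p + \frac 1q = 1$. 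This step is routine and cites \cite[Ch. I]{Ekeland99}.

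The substantive part is the identity $F^*(\Lambda^* u) = \beta\,\VG(u;\Omega)$. I would proceed as in the Riesz case: by definition,
\[
 F^*(\Lambda^* u) = \sup_{\Phi \in V} \left\{ \langle \Lambda^* u, \Phi\rangle_{V^*,V} - I_\beta(\Phi) \right\} = \sup_{\Phi \in V} \left\{ \langle u, \Lambda \Phi\rangle_{Y^*,Y} - I_\beta(\Phi) \right\} = \sup_{\Phi \in X_{\text{Gagliardo}},\ \|\Phi\|_{L^\infty(\R^n \times \R^n)} \leq \beta} \int_\Omega u\,(-\divG \Phi)\,dx.
\]
The crucial step — and this is where \Cref{pr:IbetaeqItildebetaGag} and the convexity of $\Omega$ enter — is to replace $I_\beta$ by $\tilde I_\beta$, so that the supremum can be restricted to $\Phi \in C_c^\infty(\Omega \times \Omega)$ with $\Phi(x,y) = -\Phi(y,x)$ and $\|\Phi\|_{L^\infty} \leq \beta$. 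Having done this, a rescaling $\Phi \mapsto \Phi/\beta$ gives
\[
 F^*(\Lambda^* u) = \beta \sup_{\substack{\Phi \in C_c^\infty(\Omega \times \Omega)\\ \Phi(x,y) = -\Phi(y,x),\ \|\Phi\|_{L^\infty(\Omega \times \Omega)} \leq 1}} \int_\Omega u\,(-\divG \Phi)\,dx = \beta\,\VG(u;\Omega),
\]
where the last equality is precisely \Cref{def:BV} (the definition of $\var_\alpha$), noting that the antisymmetry restriction on test-fields is harmless since $\divG$ only sees the antisymmetric part of $\Phi$.

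The main obstacle is purely bookkeeping: one must check that the admissible class of test-functions in $\VG(u;\Omega)$ — namely $\Cc^\infty(\Omega \times \Omega)$ without the antisymmetry constraint, per \Cref{def:BV} — yields the same supremum as the antisymmetric class appearing after applying \Cref{pr:IbetaeqItildebetaGag}. This follows because for any $\Phi \in \Cc^\infty(\Omega \times \Omega)$ its antisymmetrization $\Phi^{a}(x,y) := \tfrac12(\Phi(x,y) - \Phi(y,x))$ satisfies $\divG \Phi^{a} = \divG \Phi$ by \eqref{eq:divalpha}, and $\|\Phi^a\|_{L^\infty} \leq \|\Phi\|_{L^\infty}$, so passing to antisymmetric test-fields does not change the value. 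Apart from this observation, the proof is a direct transcription of the argument for \Cref{lemma:F*andG*} with $X_{\text{Riesz}}$, $\VR$, $\divR$ replaced by $X_{\text{Gagliardo}}$, $\VG$, $\divG$, and with $\chi_\Omega u$ replaced by $u$ (here $u$ is not extended by zero, matching the convention in \eqref{def:primalProblem2}). Hence the corollary follows.
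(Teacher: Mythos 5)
Your proposal is correct and follows the same route the paper intends: the corollary is stated as a direct transcription of the computation in Proposition~\ref{lemma:F*andG*}, with Proposition~\ref{pr:IbetaeqItildebetaGag} supplying the crucial replacement of $I_\beta$ by $\tilde I_\beta$ (the only place convexity of $\Omega$ is used). Your extra observation that antisymmetrization leaves $\divG$ unchanged and does not increase the $L^\infty$-norm correctly closes the small gap between the antisymmetric test-fields of $X_{\text{Gagliardo}}$ and the unrestricted test-fields in \Cref{def:BV}.
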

This motivates {us to} consider the problem 
\begin{align} \tag{$\mathscr Q_G$} \label{def:predualG}
\inf_{\phiG \in X_{\text{Gagliardo}}} \left \{\frac{1}{q} \|-\divG \phiG\|_{L^q(\Omega)}^q - \int_\Omega u_N (-\divG \phiG)\,dx + I_{\beta}(\phiG)  \right\}.
\end{align}
We have the following result (see, Corollary \ref{cor:dualityR} for the Riesz case).
\begin{corollary} \label{cor:dualityG}
If $\Omega$ is an open, bounded, and convex set, the problems  (\ref{def:primalProblem2}) and (\ref{def:predualG}) are related by
\begin{align*}
&\min_{\phiG \in X_{\text{Gagliardo}}} \left \{\frac{1}{q} \|-\divG \phiG\|_{L^q(\Omega)}^q - \int_\Omega u_N (-\divG \phiG)\,dx + I_{\beta}(\phiG)  \right\}\\
&=  -\min_{u\in L^p(\Omega)} \left\{ \frac{\gamma}{p}\|u-{u}_N\|_{L^p(\Omega)}^p + \beta  \VG (u;\Omega) \right\}.
\end{align*}
\end{corollary}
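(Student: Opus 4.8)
The plan is to apply the Fenchel duality theorem, \Cref{theorem:FenchelTheorem}, to the abstract problem~\eqref{def:Q} built from the data $V = X_{\text{Gagliardo}}$, $Y = L^q(\Omega)$ and the operators $F,G,\Lambda$ of \eqref{def:FandG_G}. With these choices \eqref{def:Q} is exactly \eqref{def:predualG}, because $G(\Lambda \Phi) = \frac{1}{q}\|-\divG \Phi\|_{L^q(\Omega)}^q - \int_\Omega u_N(-\divG \Phi)\,dx$ and $F(\Phi) = I_\beta(\Phi)$.

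First I would verify the hypotheses of \Cref{theorem:FenchelTheorem}. The space $Y = L^q(\Omega)$ is Banach, and $V = X_{\text{Gagliardo}}$ is Banach by construction, being the completion of a normed space. The functional $G$ is finite, convex, and continuous on all of $L^q(\Omega)$: the map $v \mapsto \frac{1}{q}\|v\|_{L^q(\Omega)}^q$ is convex and continuous, while $v\mapsto \int_\Omega u_N v\,dx$ is a bounded linear functional since $u_N \in L^p(\Omega) = (L^q(\Omega))^*$. The functional $F = I_\beta$ is convex, since the radius-$\beta$ ball of $L^\infty(\R^n\times\R^n)$ intersected with $X_{\text{Gagliardo}}$ is convex, and it is lower semicontinuous, since that ball is closed in $X_{\text{Gagliardo}}$: indeed, if $\Phi_k\to\Phi$ in $X_{\text{Gagliardo}}$ with $\|\Phi_k\|_{L^\infty(\R^n\times\R^n)}\le\beta$, then convergence in $L^q(\Ep_{od}\R^n)$ forces, along a subsequence, $\Phi_k(x,y)\to\Phi(x,y)$ for a.e.\ $(x,y)$ -- exactly the argument from the first half of the proof of \Cref{pr:IbetaeqItildebetaGag} -- whence $\|\Phi\|_{L^\infty(\R^n\times\R^n)}\le\beta$. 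Finally the constraint qualification holds with $v_0 = 0 \in V$: $F(0)=0<\infty$, $\Lambda 0 = 0$, $G(\Lambda 0)=G(0)=0<\infty$, and $G$ is continuous at $0$.

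Then \Cref{theorem:FenchelTheorem} gives
\[
 \inf_{\Phi \in X_{\text{Gagliardo}}}\bigl\{F(\Phi) + G(\Lambda\Phi)\bigr\} = \sup_{u\in Y^*}\bigl\{-F^*(\Lambda^* u) - G^*(-u)\bigr\},
\]
with the supremum attained. Identifying $Y^* = L^p(\Omega)$ (since $q' = p$) and substituting the conjugates computed in the corollary immediately preceding, namely $G^*(-u) = \frac{1}{p}\|u-u_N\|_{L^p(\Omega)}^p$ and $F^*(\Lambda^* u) = \beta\,\VG(u;\Omega)$, the right-hand side becomes
\[
 \sup_{u\in L^p(\Omega)}\Bigl\{-\beta\,\VG(u;\Omega) - \frac{1}{p}\|u-u_N\|_{L^p(\Omega)}^p\Bigr\} = -\inf_{u\in L^p(\Omega)}\Bigl\{\beta\,\VG(u;\Omega) + \frac{1}{p}\|u-u_N\|_{L^p(\Omega)}^p\Bigr\},
\]
which is the negative of the minimum in \eqref{def:primalProblem2} (after normalizing $\gamma$, i.e.\ the evident rescaling of the $L^q$-term in \eqref{def:predualG}), the minimum being attained by \Cref{lemma:exisUniq-II}. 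This establishes the value identity; to read the predual side as a minimum as well, one uses a direct-method argument on $X_{\text{Gagliardo}}$ parallel to the Riesz case around \Cref{cor:dualityR}, or simply interprets the left-hand side as an infimum.

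I expect that almost all of the genuine difficulty has already been handled: the identity $F^*(\Lambda^* u) = \beta\,\VG(u;\Omega)$ used above rests on \Cref{pr:IbetaeqItildebetaGag}, the replacement of $I_\beta$ by $\tilde I_\beta$, which is the only place where convexity of $\Omega$ is genuinely needed -- one scales the test vector field inward to a compactly contained rescaled copy $\Omega_\rho \subset\subset\Omega$ and then mollifies, keeping the $L^\infty$-bound. Granted that proposition and the preceding corollary, the remaining points of care in \Cref{cor:dualityG} itself are the lower semicontinuity of $I_\beta$ on $X_{\text{Gagliardo}}$ (closedness of the constraint set) and the constraint qualification -- both routine -- together with the minor extra bookkeeping needed to upgrade the predual infimum to a minimum.
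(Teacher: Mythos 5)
Your proof is correct and follows the same route the paper takes (the paper leaves this corollary as an immediate consequence of \Cref{theorem:FenchelTheorem} and the conjugate computations in the preceding corollary, which in turn rest on \Cref{pr:IbetaeqItildebetaGag}); your explicit verification of the hypotheses --- convexity and closedness of the $I_\beta$-constraint set via a.e.\ convergence extracted from $L^q(\Ep_{od}\R^n)$-convergence, and the constraint qualification at $v_0=0$ --- supplies detail the paper omits. Your parenthetical about normalizing $\gamma$, and your remark that the predual infimum is not automatically attained by the duality theorem alone, both correctly flag bookkeeping points that the paper's own statement glosses over.
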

Finally, we have the following optimality conditions as consequences of \Cref{theorem:FenchelTheorem}.

\begin{corollary} Let $\overline{u}$ be the unique solution to (\ref{def:primalProblem2}) and let $\overline{\Phi}$ be  any solution to (\ref{def:predualG}), then
\begin{align}
\Lambda^*\overline{ u} \in \partial F(\overline{\Phi})&\Leftrightarrow \left \langle  \Lambda^*\overline{u}, \Psi -\overline{\Phi}\right\rangle \leq 0 
\quad \forall \Psi\in X_{\text{Gagliardo}},\label{def:opt_cond1G} \mbox{and }\\
-\overline{u} \in \partial G(\Lambda \overline{\Phi})&\Leftrightarrow -\overline{u} = - \left| \divG \overline{\Phi}\right|^{q-2}\divG \overline{\Phi}-u_N.  \label{def:opt_cond2G}
\end{align}
\end{corollary}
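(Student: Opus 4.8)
The plan is to obtain both equivalences exactly as in the Riesz optimality lemma that precedes \eqref{def:opt_cond1}--\eqref{def:opt_cond2}, namely by reading off the abstract optimality system \eqref{def:OptCond} furnished by \Cref{theorem:FenchelTheorem}. First I would record that the hypotheses of that theorem hold for the data $V=(X_{\text{Gagliardo}},\|\cdot\|_{X_{\text{Gagliardo}}})$, $Y=(L^q(\Omega),\|\cdot\|_{L^q(\Omega)})$, and $F,G,\Lambda$ from \eqref{def:FandG_G}: the functional $G$ is convex and norm-continuous on $Y$; $F=I_\beta$ is convex and lower semicontinuous, since its effective domain $\{\Phi:\ \|\Phi\|_{L^\infty(\R^n\times\R^n)}\le\beta\}$ is closed in $X_{\text{Gagliardo}}$ (an $L^q(\Ep_{od}\R^n)$-convergent sequence admits an a.e.-convergent subsequence, exactly as in the first part of the proof of \Cref{pr:IbetaeqItildebetaGag}); and $v_0:=0\in V$ satisfies $F(v_0)=0$, $G(\Lambda v_0)=G(0)=0<\infty$, with $G$ continuous at $\Lambda v_0=0$. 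Then \Cref{theorem:FenchelTheorem} applies — this is precisely what underlies \Cref{cor:dualityG} — and, since in that application the predual \eqref{def:predualG} plays the role of problem $(\mathscr{Q})$ while \eqref{def:primalProblem2} is its dual, the roles of the two optimizers are interchanged relative to \eqref{def:OptCond}: any solution $\overline\Phi$ of \eqref{def:predualG} and the (unique) solution $\overline u$ of \eqref{def:primalProblem2} satisfy $\Lambda^*\overline u\in\partial F(\overline\Phi)$ and $-\overline u\in\partial G(\Lambda\overline\Phi)$.

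For \eqref{def:opt_cond1G} I would simply expand the first inclusion via the subdifferential characterization \eqref{def:subdiff} applied to $\mathscr F=F=I_\beta$ at $\overline\Phi$: since $\overline\Phi$ minimizes \eqref{def:predualG} it is feasible, so $I_\beta(\overline\Phi)=0$ is finite, and \eqref{def:subdiff} reads $\langle\Lambda^*\overline u,\Psi-\overline\Phi\rangle\le I_\beta(\Psi)$ for all $\Psi\in X_{\text{Gagliardo}}$, which is the asserted variational inequality (the constraint being vacuous when $\Psi$ is infeasible, exactly as in the Riesz lemma).

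For \eqref{def:opt_cond2G} I would note that $G(v)=\tfrac1q\|v\|_{L^q(\Omega)}^q-\int_\Omega u_N v\,dx$ is G\^ateaux differentiable on $L^q(\Omega)$, so by \cite[Prop. I.5.3]{Ekeland99} $\partial G(v)=\{G'(v)\}$ is single-valued; combined with the well-known duality map $\partial\|\cdot\|_{L^q(\Omega)}^q:\ v\mapsto\{q|v|^{q-2}v\}$ this gives $G'(v)=|v|^{q-2}v-u_N\in L^p(\Omega)$. Evaluating at $v=\Lambda\overline\Phi=-\divG\overline\Phi$ and using $|{-a}|^{q-2}(-a)=-|a|^{q-2}a$, the inclusion $-\overline u\in\partial G(\Lambda\overline\Phi)$ becomes the identity $-\overline u=-|\divG\overline\Phi|^{q-2}\divG\overline\Phi-u_N$, as claimed.

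The argument is routine convex analysis, so I do not expect a real obstacle; the genuinely nontrivial ingredients — the identification $I_\beta=\tilde I_\beta$ from \Cref{pr:IbetaeqItildebetaGag}, which is where convexity of $\Omega$ enters and which yields $F^*\!\circ\Lambda^*(u)=\beta\,\VG(u;\Omega)$ and hence \Cref{cor:dualityG}, together with the $L^q$-boundedness of $\divG\Phi$ from \Cref{Gagliardo.div.Lp} — are already in hand. The only point requiring care is the bookkeeping of which problem plays the role of $(\mathscr{Q})$ and which of $(\mathscr{Q}^*)$ in \Cref{theorem:FenchelTheorem}, so that $\overline u$ and $\overline\Phi$ are matched to the correct slots in \eqref{def:OptCond}.
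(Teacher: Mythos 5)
Your proposal is correct and follows essentially the same route as the paper: the paper states this corollary as a direct consequence of \Cref{theorem:FenchelTheorem} via \eqref{def:OptCond}, with the two equivalences unpacked exactly as in the Riesz-case lemma — \eqref{def:subdiff} for the first, and G\^ateaux differentiability of $G$ plus the duality map $v\mapsto q|v|^{q-2}v$ for the second. Your additional verification of the Fenchel hypotheses and the role-swap bookkeeping is consistent with what the paper leaves implicit.
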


\appendix
\section{Scaling in \texorpdfstring{$L^p$}{Lp}-norms and star-shaped domains}
In this appendix we state and prove for the convenience of the reader some facts about star-shaped domains that are most likely well-known to experts.

Denote the $n-1$-dimensional unit sphere by $\S^{n-1} := \{x \in \R^n: |x| = 1\}$. For $x \in \S^{n-1}$.

\begin{lemma}\label{la:starshapeddist}
Assume $\lambda : \S^{n-1} \to (0,\infty)$ is continuous and consider
\[
 \Omega = \left \{x\in \R^n\sm \{0\}: |x| < \lambda\brac{\frac{x}{|x|}} \right \} \cup \{0\}.
\]
For $\rho > 0$ set 
\[
 \Omega_\rho := \{\rho x: \quad x \in \Omega\}
\]
then we have for any $\rho_1 < \rho_2$
\[
 \dist(\Omega_{\rho_1},\R^n \sm \Omega_{\rho_2}) > 0.
\]
\end{lemma}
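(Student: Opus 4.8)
The plan is to reduce everything to a compactness argument on the unit sphere. First I would observe that $\Omega_\rho$ is exactly the star-shaped domain associated to the scaled radial function $\rho\lambda$, i.e.
\[
 \Omega_\rho = \left\{x \in \R^n \sm \{0\}: |x| < \rho\,\lambda\!\brac{\tfrac{x}{|x|}}\right\} \cup \{0\}.
\]
This is immediate from the definition: $y = \rho x \in \Omega_\rho$ with $x \in \Omega$ iff $|y|/\rho < \lambda(y/|y|)$, since $x/|x| = y/|y|$. Next I would note that it suffices to bound, from below by a positive constant, the distance from any point $y \in \Omega_{\rho_1}$ to the boundary set $\R^n \sm \Omega_{\rho_2}$; in fact I would show the slightly stronger claim that
\[
 B\!\left(y, (\rho_2 - \rho_1)\,\min_{\theta \in \S^{n-1}} \lambda(\theta)\right) \subseteq \Omega_{\rho_2}
 \quad \text{for every } y \in \overline{\Omega_{\rho_1}}.
\]
Since $\lambda$ is continuous and positive on the compact set $\S^{n-1}$, the quantity $m := \min_{\S^{n-1}} \lambda > 0$ is attained and strictly positive, so this gives the conclusion with the explicit lower bound $(\rho_2-\rho_1)m > 0$.

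To prove the ball inclusion I would argue by a direct radial estimate. Fix $y \in \overline{\Omega_{\rho_1}}$, so $|y| \le \rho_1\lambda(y/|y|)$ (with the convention that $y=0$ is trivially fine), and let $z$ be any point with $|z - y| < (\rho_2-\rho_1)m$. I want $|z| < \rho_2\lambda(z/|z|)$. The subtle point is that $z/|z|$ may differ from $y/|y|$, so I cannot simply use the bound on $\lambda$ at the single direction $y/|y|$; this is the main obstacle, and it is why the uniform lower bound $m$ on $\lambda$ (rather than continuity at one point) is the right tool. I would estimate
\[
 |z| \le |y| + |z-y| < \rho_1\lambda\!\brac{\tfrac{y}{|y|}} + (\rho_2-\rho_1)m,
\]
and I would like to compare the right-hand side with $\rho_2 \lambda(z/|z|)$. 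The cleanest way is to avoid comparing $\lambda$ at two directions altogether: instead, show directly that the segment from $0$ to $z$ stays inside $\Omega_{\rho_2}$, or equivalently that $z \in \Omega_{\rho_2}$, by writing $z$ as a point slightly perturbed from the radial segment through $y$. Concretely, consider the point $y' := \rho_2\,\tfrac{y}{\rho_1}$ if $y \neq 0$ (the "same direction, scaled out" point); then $|y'| = \tfrac{\rho_2}{\rho_1}|y| \le \rho_2\lambda(y'/|y'|)$ so $y' \in \overline{\Omega_{\rho_2}}$, and moreover $|y' - y| = (\tfrac{\rho_2}{\rho_1} - 1)|y| = \tfrac{\rho_2-\rho_1}{\rho_1}|y|$. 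I would then show the full ball $B(y', r)$ for a suitable radius $r = r(\rho_1,\rho_2,m,\operatorname{diam}\Omega)$ lies in $\Omega_{\rho_2}$ by a Lipschitz-type estimate on the radial function, and deduce the claim for $y$ by choosing constants so that $B(y,\varepsilon) \subseteq B(y',r)$.

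On reflection, the technically cleanest route — and the one I would actually write up — bypasses Lipschitz estimates on $\lambda$ entirely: I would prove that $\overline{\Omega_{\rho_1}} \subseteq \Omega_{\rho_2}$ is compactly contained by noting that $\overline{\Omega_{\rho_1}}$ is compact (it is closed and bounded, being contained in $\overline{B(0, \rho_1 \max_{\S^{n-1}}\lambda)}$) and that $\Omega_{\rho_2}$ is open (preimage of $(-\infty,0)$ under the continuous map $x \mapsto |x| - \rho_2\lambda(x/|x|)$, suitably handled at the origin). Then $\overline{\Omega_{\rho_1}} \subset \Omega_{\rho_2}$ would follow from the pointwise inequality $|x| \le \rho_1\lambda(x/|x|) < \rho_2\lambda(x/|x|)$ valid for all $x \in \overline{\Omega_{\rho_1}} \sm\{0\}$ (using $\rho_1 < \rho_2$ and $\lambda > 0$), plus the trivial case $x=0$. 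Finally, the distance from a compact set to the complement of an open superset is positive by a standard extreme-value argument: the continuous function $x \mapsto \dist(x, \R^n\sm\Omega_{\rho_2})$ is strictly positive on the compact set $\overline{\Omega_{\rho_1}}$, hence attains a positive minimum. The only care needed is handling the closure at the origin and checking that "$\overline{\Omega_{\rho_1}} \sm \{0\}$ points satisfy the strict radial inequality," which follows since for such a point $x$, $x/|x| \in \S^{n-1}$ is well-defined and $|x| \le \rho_1\lambda(x/|x|)$ by taking limits in the defining inequality.
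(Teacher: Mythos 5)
Your final argument --- the one you say you would actually write up --- is correct, and it takes a genuinely different route from the paper. The paper first characterizes $\partial\Omega$ as $\{x\neq 0: |x|=\lambda(x/|x|)\}$, reduces $\dist(\Omega_{\rho_1},\R^n\sm\Omega_{\rho_2})$ to the distance between the two boundaries, parametrizes those boundaries over $\S^{n-1}$, and uses compactness of the sphere to show that $\inf_{x,y\in\S^{n-1}}\abs{\rho_1 x\lambda(x)-\rho_2 y\lambda(y)}$ is attained and cannot vanish (vanishing would force $\bar x=\bar y$ and then $\rho_1=\rho_2$). You instead verify the three standard facts that $\overline{\Omega_{\rho_1}}$ is compact, that $\Omega_{\rho_2}$ is open, and that $\overline{\Omega_{\rho_1}}\subseteq\Omega_{\rho_2}$ (via $|x|\le\rho_1\lambda(x/|x|)<\rho_2\lambda(x/|x|)$ for $x\neq 0$, plus the trivial case $x=0$), and then minimize the continuous function $x\mapsto\dist(x,\R^n\sm\Omega_{\rho_2})$ over the compact set. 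This is arguably cleaner: it bypasses the boundary characterization entirely and also avoids the paper's unjustified (though correct) intermediate step that the distance between the two sets equals the distance between their boundaries; what the paper's route buys in exchange is an explicit expression for the distance as an infimum over the sphere, which it never uses. One caution: the first route you sketch, the uniform ball inclusion $B\brac{y,(\rho_2-\rho_1)\min_{\S^{n-1}}\lambda}\subseteq\Omega_{\rho_2}$ for all $y\in\overline{\Omega_{\rho_1}}$, is actually false as stated --- take $\lambda\equiv 1$ except for a tall, very thin spike in one direction; the boundary point of $\Omega_{\rho_1}$ at the tip of the spike has points at arbitrarily small distance (in a direction just off the spike) whose radius far exceeds $\rho_2\lambda$ there, so they lie outside $\Omega_{\rho_2}$. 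You correctly identified this obstacle and abandoned that route, so it does not affect the proof you committed to, but it is worth knowing the claim is not merely hard to prove but wrong.
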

\begin{proof}
We first observe 
\begin{equation}\label{eq:23489wiref}
 \partial \Omega = \left \{x\in \R^n \sm \{0\}: |x| = \lambda\brac{\frac{x}{|x|}} \right \}.
\end{equation}

Indeed $\bar{x} \in \partial \Omega$. Since $0 \in \Omega$ and $\Omega$ is open by continuity of $\lambda$, we have $\bar{x} \neq 0$. Then there exists $0 \neq x_k \in \Omega$, $0 \neq y_k \in \R^n \sm \Omega$ such that $\lim_{k} |x_k-\bar{x}| = \lim_{k} |y_k-\bar{x}| =0$. We have 
\[
 |x_k| < \lambda\brac{\frac{x_k}{|x_k|}}, \quad |y_k| \geq \lambda\brac{\frac{y_k}{|y_k|}} \quad \forall k.
\]
Since $x_k, y_k, \bar{x} \neq 0$ these expressions are continuous and passing to the limit as $k \to \infty$,
\[
 |\bar{x}| \leq \lambda\brac{\frac{\bar{x}}{|\bar{x}|}}, \quad |\bar{x}| \geq \lambda\brac{\frac{\bar{x}}{|\bar{x}|}} \quad \forall k.
\]
This implies $|\bar{x}| = \lambda(\frac{\bar{x}}{\bar{x}})$ and thus we have established
\[
 \partial \Omega \subseteq \left \{x\in \R^n \sm \{0\}: |x| = \lambda\brac{\frac{x}{|x|}} \right \}.
\]
Now assume $\bar{x} \in \R^n \sm \{0\}$ with $|\bar{x}| = \lambda\brac{\frac{x}{|x|}}$. Then for $\mu > 0$ we have 
\[
 |\mu\bar{x}| = \mu \lambda\brac{\frac{\mu \bar{x}}{|\mu \bar{x}|}}.
\]
Thus, if $\mu > 1$ we have $\mu \bar{x} \not \in \Omega$ and if $\mu < 1$ we have $\mu \bar{x} \in \Omega$. In particular, 
\[
 x_k := (1-\frac{1}{k}) \bar{x} \in \Omega, \quad y_k := (1+\frac{1}{k}) \bar{x} \not \in \Omega,
\]
and $\lim_{k \to \infty} x_k = \lim_{k \to \infty} y_k = \bar{x}$, so $\bar{x} \in \overline{\Omega} \cap \overline{\R^n \sm \Omega} = \partial \Omega$. This implies
\[
 \partial \Omega \supseteq \left \{x\in \R^n \sm \{0\}: |x| = \lambda\brac{\frac{x}{|x|}} \right \}.
\]
So \eqref{eq:23489wiref} is established.

Next we observe
\[
 \Omega_\rho = \left \{x\in \R^n \sm \{0\}: |x| < \rho \lambda\brac{\frac{x}{|x|}} \right \} \cup \{0\}.
\]
In particular if $\rho_1 < \rho_2$ we have that 
\[
 \Omega_{\rho_1} \cap \brac{\R^n \sm \Omega_{\rho_2} } = \{x: |x| < \rho_1 \lambda\brac{\frac{x}{|x|}} , \quad \text{ and } \quad |x| \geq \rho_2 \lambda\brac{\frac{x}{|x|}}  \}= \emptyset.
\]
Since $\Omega_{\rho_1}$ and $\brac{\R^n \sm \Omega_{\rho_2} }$ are disjoint, and  $\Omega_{\rho_1}$ is bounded we conclude that 
\[
\begin{split}
 \dist(\Omega_{\rho_1},\R^n \sm \Omega_{\rho_2}) =& \dist(\partial \Omega_{\rho_1}, \partial \Omega_{\rho_2})\\
 =& \inf_{x,y \in \R^n} \abs{ \rho_1 \frac{x}{|x|} \lambda(\frac{x}{|x|})-\rho_2 \frac{y}{|y|} \lambda(\frac{y}{|y|})}\\
 =& \inf_{x,y \in \S^{n-1}} \abs{ \rho_1x \lambda(x)-\rho_2 y \lambda(y)}.
 \end{split}
\]
Since $\lambda(\cdot)$ is continuous and $\S^{n-1}$ is compact, this infimum is attained at some $\bar{x}, \bar{y} \in \S^{n-1}$,
\[
\dist(\Omega_{\rho_1},\R^n \sm \Omega_{\rho_2})=\abs{\rho_1 \bar{x} \lambda(\bar{x})- \rho_2 \bar{y} \lambda(\bar{y})}
\]
We claim that $\abs{\rho_1 \bar{x} \lambda(\bar{x})- \rho_2 \bar{y} \lambda(\bar{y})} > 0$. Indeed if this was not the case we would have 
\[
 \rho_1 \bar{x} \lambda(\bar{x}) =  \rho_2 \bar{y} \lambda(\bar{y})
\]
Since the scalar factors $\rho_1, \rho_2, \lambda(\bar{x}),\lambda(\bar{y})$ are all positive -- and $|\bar{x}|=|\bar{y}|=1$ this implies that $\bar{x} = \bar{y}$. Whence we would find 
\[
 \rho_1 \lambda(\bar{x}) = \rho_2 \lambda(\bar{x}),
\]
and thus -- since $\lambda(\bar{x}) \in (0,\infty)$, $\rho_1 = \rho_2$ -- a contradiction to $\rho_1 < \rho_2$. Thus we have established
\[
 \dist(\Omega_{\rho_1},\R^n \sm \Omega_{\rho_2}) > 0.
\]
\end{proof}

In \Cref{la:starshapeddist2}, the continuity of $\lambda$ is not guaranteed for generic star-shaped domain -- even if their boundaries are Lipschitz. We provide two examples in Figure \ref{discontinuous,lambda,example}. The first example is the union of an open disk and an open sector. The second is an open unit disk with a slit, i.e. the ray $\{ (c +1/2, c+1/2) : c\geq 0 \}$ is excluded from the disk. 

\begin{figure} 
\centering
\begin{minipage}{0.3\textwidth}
\centering
\begin{tikzpicture}
\draw[dashed] (70:1) arc (70:-250:1);
\draw[dashed] (70:1) --  (70 : 1.3) arc(70:110:1.3) -- (110: 1.3) -- (110: 1);
\draw[->] (-1.3,0) -- (1.3,0); 
\draw[->] (0,-1.3) -- (0,1.6) ;
\end{tikzpicture}
\end{minipage}
\begin{minipage}{0.3\textwidth}
\centering
\begin{tikzpicture}
\draw[dashed] (0,0) circle (1cm);
\draw[dashed] (45:0.5) --  (45:1);
\draw[->] (-1.3 , 0) -- (1.3,0); 
\draw[->] (0,-1.3) -- (0,1.3) ;
\end{tikzpicture}
\end{minipage}
\caption{Examples of star-shaped sets with discontinuous $\lambda$. Both sets are star-shaped with respect to the origin, and the first has even Lipschitz continuous boundary -- however the conclusions of \Cref{la:starshapeddist} are not true.}
\label{discontinuous,lambda,example}
\end{figure}
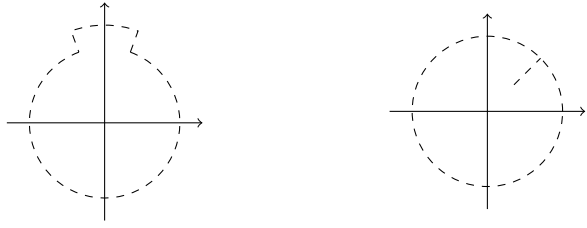

However, the assumptions of the set $\Omega$ in \Cref{la:starshapeddist} are satisfied if $\Omega$ is star-shaped w.r.t to an \emph{open neighborhood} of the origin -- this can be obtained by a careful inspection of the proof below. We will focus on convexity here.
\begin{lemma}\label{la:starshapeddist2}
Let $\Omega$ be an open, bounded, convex set with $0 \in \Omega$, then 
there exists continuous $\lambda : \S^{n-1} \to (0,\infty)$ 
such that
\begin{equation} \label{boundary,of,Omega}
 \Omega = \left \{x\in \R^n\sm \{0\}: |x| < \lambda\brac{\frac{x}{|x|}} \right \} \cup \{0\}.
\end{equation}
In particular the results of \Cref{la:starshapeddist} are true.
\end{lemma}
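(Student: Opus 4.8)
The plan is to realize $\Omega$ through its \emph{radial function}, identify $\lambda$ with the reciprocal of the Minkowski gauge of $\Omega$, and deduce continuity from the convexity of that gauge. First I would fix $0<r<R$ with $B(0,r)\subseteq\Omega\subseteq B(0,R)$, which is possible because $0$ is an interior point and $\Omega$ is bounded. Define, for $\theta\in\S^{n-1}$,
\[
 \lambda(\theta):=\sup\{t>0:\ t\theta\in\Omega\},\qquad \mu(x):=\inf\{s>0:\ x\in s\Omega\}\ (x\in\R^n).
\]
From $B(0,r)\subseteq\Omega\subseteq B(0,R)$ one reads off $|x|/R\le\mu(x)\le|x|/r$ and $r\le\lambda(\theta)\le R$, so $\mu$ is finite everywhere and positive away from the origin, $\lambda$ takes values in $[r,R]\subset(0,\infty)$, and a direct substitution $t=1/s$ gives $\lambda(\theta)=1/\mu(\theta)$ for $\theta\in\S^{n-1}$.

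Next I would verify the representation \eqref{boundary,of,Omega}. For ``$\subseteq$'': if $x\in\Omega\setminus\{0\}$ then, $\Omega$ being open, $(1+\eps)x\in\Omega$ for some $\eps>0$, so $\lambda(x/|x|)\ge(1+\eps)|x|>|x|$. For ``$\supseteq$'': if $x\neq 0$ and $|x|<\lambda(x/|x|)$, pick $t'$ with $|x|<t'\le\lambda(x/|x|)$ and $t'(x/|x|)\in\Omega$ (by definition of the supremum); then $x=\tfrac{|x|}{t'}\bigl(t'\tfrac{x}{|x|}\bigr)+\bigl(1-\tfrac{|x|}{t'}\bigr)\cdot 0$ is a convex combination of two points of $\Omega$ (here $0\in\Omega$ is used), hence $x\in\Omega$ by convexity; the origin lies in both sides by construction. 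This establishes \eqref{boundary,of,Omega}.

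The only genuine point is the continuity of $\lambda$, which I would obtain from convex analysis rather than by hand. The gauge $\mu$ is positively $1$-homogeneous and subadditive — subadditivity being exactly the convexity of $\Omega$: if $x\in a\Omega$, $y\in b\Omega$ then $\tfrac{x+y}{a+b}=\tfrac{a}{a+b}\tfrac{x}{a}+\tfrac{b}{a+b}\tfrac{y}{b}\in\Omega$, whence $\mu(x+y)\le a+b$ — so $\mu$ is a convex function on all of $\R^n$; being finite everywhere it is locally Lipschitz, in particular continuous. Restricted to the compact sphere $\S^{n-1}$ it is continuous and bounded below by $1/R>0$, so $\lambda=1/(\mu|_{\S^{n-1}})$ is continuous with values in $(0,\infty)$, proving the claim. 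With \eqref{boundary,of,Omega} and the continuity of $\lambda$ in hand, $\Omega$ satisfies the hypotheses of \Cref{la:starshapeddist}, so its conclusions apply, which finishes the proof.

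I expect the continuity of $\lambda$ to be the main (and essentially the only) obstacle; the representation \eqref{boundary,of,Omega} is bookkeeping with the definitions. Invoking local Lipschitzness of finite convex functions makes it painless, but if a self-contained and quantitative statement were desired one can instead give an elementary cone argument yielding $|\lambda(\theta_1)-\lambda(\theta_2)|\lesssim (R^2/r)\,|\theta_1-\theta_2|$, by comparing the segment joining $\lambda(\theta_1)\theta_1$ to a point of $B(0,r)$ with the boundary point of $\Omega$ in the direction $\theta_2$.
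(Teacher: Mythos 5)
Your proposal is correct, and the heart of it --- the continuity of $\lambda$ --- is argued by a genuinely different route than the paper's. The paper proves continuity by hand: it forms the open cone $A$ joining the boundary point $\lambda(\bar x)\bar x$ to an inner ball $B(0,a)\subset\Omega$, observes $A\subset\Omega$, and derives lower and upper semicontinuity of $\lambda$ separately by a contradiction argument with sequences $x_k\to\bar x$ (this is exactly the ``elementary cone argument'' you mention as an alternative at the end). You instead identify $\lambda$ with the reciprocal of the Minkowski gauge $\mu$ of $\Omega$, note that convexity of $\Omega$ is precisely subadditivity of $\mu$, and invoke the standard fact that a finite, positively homogeneous, subadditive function is convex and hence locally Lipschitz; combined with the lower bound $\mu\geq 1/R$ on $\S^{n-1}$ this gives continuity (indeed Lipschitz continuity) of $\lambda$ in one stroke. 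Your route is shorter, imports a standard convex-analysis fact, and yields a quantitative modulus of continuity essentially for free; the paper's cone argument is self-contained and, as its closing remark points out, isolates which half of the continuity survives for merely star-shaped domains (lower semicontinuity) versus domains star-shaped with respect to a neighborhood of the origin (upper semicontinuity) --- a distinction the gauge argument does not expose, since subadditivity of $\mu$ genuinely uses convexity. The verification of the set identity \eqref{boundary,of,Omega} is the same in both proofs (openness for one inclusion, convexity plus $0\in\Omega$ for the other), so no issues there.
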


\begin{proof}
For $x\in\S^{n-1}$, we define
\begin{equation}
 \lambda(x):= \sup \left\{ r\geq 0 : rx \in \Omega  \right\}.
\end{equation}
Since $\Omega$ is open and $0 \in \Omega$ there exists a ball $B(0,a) \subset \Omega$, and thus $\lambda(x) \geq r$ for all $x \in \S^{n-1}$. Since $\Omega$ is bounded there must be some $b > 0$ such that $\lambda(x) \leq b$ for all $x \in \S^{n-1}$.

We first establish \eqref{boundary,of,Omega}. If $x \in \Omega$ then $|x| \frac{x}{|x|} \in \Omega$ and since $0 \in \Omega$ we have that $r \frac{x}{|x|} \in \Omega$ for all $r \in [0,|x|]$. Since $\Omega$ is open, there actually must be some $\delta > 0$ such that $r \frac{x}{|x|} \in \Omega$ for all $r \in [0,|x|+\delta]$. Thus $\lambda(x/|x|) \geq |x|+\delta > |x|$.

On the other hand if $x \in \R^n \sm \{0\}$ and $|x| < \lambda(x/|x|)$, then by definition of $\lambda(\cdot)$ there must be some $r>|x|$ such that $r x/|x| \in \Omega$. Since $0 \in \Omega$ and $\Omega$ is convex we conclude that $x=|x| x/|x| \in \Omega$. Thus \eqref{boundary,of,Omega} is established.

It remains to prove the continuity of $\lambda$ on $\mathbb{S}^{n-1}$. 
Given any $\bar{x} \in \S^{n-1}$,  we let 
$ \{x_k\}_{k=1}^{\infty} \subseteq  \S^{n-1} $ be a sequence
such that $x_k \xrightarrow{k \to \infty} \bar{x}$.

Recall that the open ball $B(0,a) \subset \Omega$. We denote the open cone from $\lambda(\bar{x}) \bar{x}$ to $B(0,a)$ as
\begin{equation}
A:=\{ \theta \lambda(\bar{x})\bar{x} + (1-\theta) z :  z\in B(0,a) , \;  \theta\in [0,1) \}.
\end{equation}
Clearly, $A$ is an open set. 
 Also, whenever $\theta \in [0,1)$ we have that $\theta \lambda(\bar{x})\bar{x} \in \Omega$, by convexity of $\Omega$ and definition of $\lambda(\cdot)$. Since $z$ is taken from an open ball $B(0,a) \subset \Omega$ we conclude that $\theta \bar{x} + (1-\theta) z \in \Omega$. That is we have $A \subset \Omega$.

Similarly, we define  the  open sets $A_k$ by
\begin{equation}
A_k:=\{ \theta \lambda(x_k) x_k + (1-\theta) z :  z\in B(0,a) , \;  \theta\in [0,1) \} \subset \Omega
\end{equation}

Now we assume that there exists $\eps > 0$ and a sequence $x_k \in \S^{n-1}$ converging to $\bar{x} \in \S^{n-1}$ such that $\lambda(x_k) \leq \lambda(\bar{x})-\eps$. Then $x_{k} \lambda(x_k) \subset A$ when $k$ is sufficiently large, see Figure~\ref{discontinuous,lambda,temp1}. 
Thus we have lower semicontinuity of $\lambda$: \[\lambda(\overline{x})\leq \liminf_{\S^{n-1} \ni x \to \bar{x}} \lambda(x).\]

On the other hand, if there exists $\eps > 0$ and a sequence $x_k \in \S^{n-1}$ converging to $\bar{x} \in \S^{n-1}$ such that $\lambda(x_k) \geq \lambda(\bar{x})+\eps$. Then we have that $\bar{x} \lambda(\bar{x}) \in A_{k}$ for all large $k$, see Figure~\ref{discontinuous,lambda,temp1}. Thus we have established upper semicontinuity of $\lambda$
\[
\lambda(\overline{x})\geq \limsup_{\S^{n-1} \ni x \to \bar{x}} \lambda(x).
\]
Therefore, we have proved the continuity of $\lambda$.

\begin{figure} 
\centering
\begin{minipage}{0.3\linewidth}
\begin{tikzpicture}
\draw[thin] (0,0) circle (2cm);
\filldraw[black] (30:2) circle (1pt) node[anchor=south]{\tiny $\overline{x}$};
\filldraw[black] (15:2) circle (1pt) node[anchor=north]{\tiny $x_k$};
\draw[->,thin] (17:1.8)  arc (17:28:1.8);
\draw[dashed] (45:2.9)  arc (45:-20:2.9) node[anchor=north]{\tiny $\lambda(\overline{x})-\varepsilon$};
\draw[thin] (80:2) -- (30:3.25) --(-25:2);
\filldraw[black] (30:3.25) circle (1pt) node[anchor=west]{\tiny $\lambda(\overline{x})\overline{x}$};
\filldraw[black] (15:2.7) circle (1pt) node[anchor=west]{\tiny $\lambda(x_k)x_k$};
\draw[dashed] (0,0) -- (30:4) ;
\draw[dashed] (0,0) -- (15:4);
\end{tikzpicture}
\end{minipage}
\qquad \qquad
\begin{minipage}{0.3\linewidth}
\begin{tikzpicture}
\draw[thin] (0,0) circle (2cm);
\filldraw[black] (30:2) circle (1pt) node[anchor=south]{\tiny $\overline{x}$};
\filldraw[black] (15:2) circle (1pt) node[anchor=north]{\tiny $x_k$};
\draw[->,thin] (17:1.8)  arc (17:28:1.8);
\draw[dashed] (40:3.5)  arc (40:-10:3.45) node[anchor=north]{\tiny $\lambda(\overline{x})+\varepsilon$};
\draw[thin] (70:2) -- (15:3.75) --(-35:2);
\filldraw[black] (30:3.25) circle (1pt) node[anchor=south]{\tiny $\lambda(\overline{x})\overline{x}$};
\filldraw[black] (15:3.75) circle (1pt) node[anchor=north]{\tiny $\lambda(x_k)x_k$};
\draw[dashed] (0,0) -- (30:4) ;
\draw[dashed] (0,0) -- (15:4.2);
\end{tikzpicture}
\end{minipage}
\caption{\label{discontinuous,lambda,temp1} 
Assuming that the ball $B(0,a)$ in the proof is actually equal to $B(0,1)$ (which can always be obtained by scaling) the above figure explains the proof of \Cref{la:starshapeddist2}. \\
Left: if $\lambda(x_k) < \lambda(\bar{x})-\eps$ and $x_k$ is sufficiently close to $\bar{x}$ then $\lambda(x_k) x_k$ must belong to the cone $A$. Right: if $\lambda(x_k) > \lambda(\bar{x}) + \eps$ and $x_k$ is sufficiently close to $\bar{x}$ then $\bar{x}$ must belong to $A_k$ (using that the cone $A_k$ has a minimal aperture that does not change and is determined by $B(0,a)$ as $k$ changes)}

\end{figure}

\end{proof}

\begin{remark}
We leave the technical details to the reader, but observe that the lower semicontinuity of $\lambda$ holds under the assumption that $\Omega$ is open and star-shaped. It is the upper semiconintuity of $\lambda$ that requires the center of $\Omega$ containing an open neighborhood of the origin $B(0,a)$ (which in particular is a consequence of convexity and openness).
\end{remark}

\begin{lemma}\label{la:Lpconv}
Let $\Omega \subseteq \R^n$ be an open domain star-shaped with respect to the origin. Fix $p \in [1,\infty)$, let $f \in L^p(\Omega)$ and set for $\rho > 1$
\[
 f_\rho := f(\cdot/\rho).
\]
Then
\[
 \|f_\rho -f \|_{L^p(\Omega)} \xrightarrow{ \rho \to 1^+} 0.
\]
\end{lemma}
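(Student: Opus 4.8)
The plan is to run the standard three-$\varepsilon$ argument: approximate $f$ by a continuous compactly supported function, use uniform continuity of the latter under dilation, and control the resulting error by a uniform operator bound for the dilation map as $\rho\to 1^+$.

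First I would record the elementary properties of the dilation operator $T_\rho g:=g(\cdot/\rho)$. Since $\Omega$ is star-shaped with respect to $0$, for $x\in\Omega$ and $\rho>1$ the point $x/\rho$ lies on the segment $[0,x]\subset\Omega$, so $T_\rho g$ is a well-defined element of $L^p(\Omega)$ whenever $g\in L^p(\Omega)$; the change of variables $y=x/\rho$ gives $\|T_\rho g\|_{L^p(\Omega)}^p=\rho^n\int_{\Omega/\rho}|g|^p\le \rho^n\|g\|_{L^p(\Omega)}^p$, hence $\|T_\rho g\|_{L^p(\Omega)}\le \rho^{n/p}\|g\|_{L^p(\Omega)}\le 2^{n/p}\|g\|_{L^p(\Omega)}$ for $\rho\in(1,2)$. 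The same computation on all of $\R^n$ gives $\|T_\rho h\|_{L^p(\R^n)}\le \rho^{n/p}\|h\|_{L^p(\R^n)}$ for $h\in L^p(\R^n)$.

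Next, given $\varepsilon>0$, extend $f$ by zero to $\bar f\in L^p(\R^n)$ and pick $g\in C_c(\R^n)$ with $\|\bar f-g\|_{L^p(\R^n)}<\varepsilon$. Then
\[
\|f_\rho-f\|_{L^p(\Omega)}\le \|T_\rho(\bar f-g)\|_{L^p(\Omega)}+\|T_\rho g-g\|_{L^p(\Omega)}+\|g-\bar f\|_{L^p(\Omega)}.
\]
The first term is $\le \|T_\rho(\bar f-g)\|_{L^p(\R^n)}\le 2^{n/p}\varepsilon$ by the bound above, and the third is $\le \varepsilon$. For the middle term, if $\supp g\subset B(0,R)$ then $\supp T_\rho g\subset B(0,\rho R)\subset B(0,2R)$ for $\rho<2$, and since $g$ is uniformly continuous on $\R^n$ we have $\|T_\rho g-g\|_{L^\infty(\R^n)}\to 0$ as $\rho\to 1^+$; therefore $\|T_\rho g-g\|_{L^p(\Omega)}\le |B(0,2R)|^{1/p}\,\|T_\rho g-g\|_{L^\infty(\R^n)}\to 0$. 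Choosing $\rho$ sufficiently close to $1$ makes the middle term $<\varepsilon$, so $\limsup_{\rho\to1^+}\|f_\rho-f\|_{L^p(\Omega)}\le (2^{n/p}+2)\varepsilon$; letting $\varepsilon\to 0$ finishes the proof.

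I do not expect a real obstacle here; the only points needing care are that star-shapedness of $\Omega$ is exactly what makes $T_\rho$ well-defined on $\Omega$, and that the operator norm of $T_\rho$ on $L^p$ stays bounded (indeed equals $\rho^{n/p}$) as $\rho\to 1^+$, so that the density-approximation error is not amplified. Everything else is the textbook continuity-of-dilations argument.
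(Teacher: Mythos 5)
Your proof is correct and follows essentially the same route as the paper's: approximate $f$ by $g\in C_c(\R^n)$, use star-shapedness to control the dilation of the error term by change of variables, and use uniform continuity of $g$ on a compact set for the middle term. If anything, your version is slightly more careful with the Jacobian factor (the dilation has $L^p$ operator norm $\rho^{n/p}$, bounded by $2^{n/p}$ for $\rho\in(1,2)$, which is all one needs), so there is nothing to add.
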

\begin{proof}
Let $\eps > 0$. Since $p \in [1,\infty)$ we have $C^0(\overline{\Omega})$ is dense in $L^p(\Omega)$, and thus there exists $g \in C_c^0(\R^n)$ with
\[
 \|f-g\|_{L^p(\Omega)} \leq \eps.
\]
Set
\[
 g_\rho := g(\cdot/\rho).
\]
Since $\Omega$ is star-shaped with respect to the origin,
\[
 \|f_\rho -g_\rho\|_{L^p(\Omega)} = \rho^{-\frac{n}{p}} \|f -g\|_{L^p(\frac{1}{\rho}\Omega)} \overset{\rho > 1}{\leq}\|f -g\|_{L^p(\Omega)} \leq \eps.
\]
Then we have  for any $\rho > 1$,
\begin{equation}\label{eq:frhomf12313}
 \|f_\rho -f \|_{L^p(\Omega)} \leq \|f_\rho -g_\rho\|_{L^p(\Omega)} + \|f -g\|_{L^p(\Omega)} + \|g-g_\rho\|_{L^p(\Omega)} \leq 2\eps + \|g-g_\rho\|_{L^p(\Omega)}.
\end{equation}
Take $R > 0$ such that
\[
 \supp g \subset B(0,R/4).
\]
Since $g$ has compact support we can find such an $R$. Then $g$ is uniformly continuous on $\overline{B(0,R)}$ and thus there exists some $\rho_0 \in (1,2)$ such that
\[
 |g(x) - g(x/\rho)|\leq \frac{\eps}{|B(0,R)|^{\frac{1}{p}}} \quad \forall x \in \overline{B(0,R)}, \forall \rho \in [1,\rho_0].
\]
On the other hand if $x \not \in B(0,R)$ then
\[
 g(x)=g(x/\rho) = 0 \quad \forall \rho \in [1,2].
\]
Thus we have
\[
 \|g - g_\rho\|_{L^\infty(\R^n)} < \frac{\eps}{|B(0,R)|^{\frac{1}{p}}} \quad \forall \rho \in [1,\rho_0]
\]
and thus
\[
 \|g-g_\rho\|_{L^p(\Omega)} =\|g-g_\rho\|_{L^p(B(0,R))} \leq |B(0,R)|^{\frac{1}{p}}\, \|g - g_\rho\|_{L^\infty(\R^n)} \leq \eps.
\]
Combining this with \eqref{eq:frhomf12313}, we have shown
\[
 \|f_\rho -f \|_{L^p(\Omega)} \leq 3\eps,
\]
which holds for any $\rho \in [1,\rho_0)$. We can conclude.
\end{proof}

\bibliographystyle{abbrv}
\bibliography{refs}

\end{document}